\theoremstyle{plain}
\newtheorem*{theorem*}{Theorem}
\newtheorem*{remark*}{Remark}
\newtheorem*{example*}{Example}
\newtheorem{lemma}{Lemma}[subsection]
\newtheorem{proposition}[lemma]{Proposition}
\newtheorem{corollary}[lemma]{Corollary}
\newtheorem*{conjecture*}{Conjecture}
\theoremstyle{definition}
\newtheorem{definition}[lemma]{Definition}
\newtheorem{cond}[lemma]{Condition}
\theoremstyle{remark}
\newtheorem{remark}[lemma]{Remark}
\newtheorem{notation}[lemma]{Notation}
\newcommand{\Hom}{\operatorname{Hom}}
\newcommand{\id}{\operatorname{Id}}
\newcommand{\Aut}{{\operatorname{Aut}}}
\newcommand{\End}{\operatorname{End}}
\newcommand{\bC}{{\mathbb C}}
\newcommand{\bZ}{{\mathbb Z}}
\newcommand{\lam}{{\lambda}}
\newcommand{\gl}{{\mathfrak{gl}}}
\newcommand{\abs}[1]{\left|{#1}\right|}
\newcommand{\InnaA}[1]{#1}
\newcommand{\InnaC}[1]{{{{#1}}}}
\newcommand{\InnaD}[1]{{#1}}
\def\quotient#1#2{%
    \raise1ex\hbox{$#1$}\Big/\lower1ex\hbox{$#2$}%
}
\begin{document}

\date{\today}
\title{Notes on restricted inverse limits of categories}
 \author{Inna Entova Aizenbud}
\address{Inna Entova Aizenbud,
Massachusetts Institute of Technology,
Department of Mathematics,
Cambridge, MA 02139 USA.}
\email{inna.entova@gmail.com}

\begin{abstract}
We describe the framework for the notion of a restricted inverse limit of categories, with the main motivating example being the category of polynomial representations of the group $GL_\infty = \bigcup_{n \geq 0} GL_n$. This category is also known as the category of strict polynomial functors of finite degree, and it is the restricted inverse limit of the categories of polynomial representations of $GL_n$, $n \geq 0$. This note is meant to serve as a reference for future work.

\end{abstract}

\maketitle
\setcounter{tocdepth}{3}
\section{Introduction}
In this note, we discuss the notion of an inverse limit of an inverse sequence of categories and functors.

Given a system of categories $\mathcal{C}_i$ (with $i$ running through the set $\bZ_+$) and functors $\mathcal{F}_{i-1, i}: \mathcal{C}_i \rightarrow \mathcal{C}_{i-1}$ for each $i \geq 1$, we define the inverse limit category $\varprojlim_{i \in \bZ_+} \mathcal{C}_i$ to be the following category:
\begin{itemize}
 \item The objects are pairs $(\{C_i\}_{i \in \bZ_+}, \{\phi_{i-1, i}\}_{i \geq 1})$ where $C_i \in \mathcal{C}_i$ for each $i \in \bZ_+$ and $\phi_{i-1, i}: \mathcal{F}_{i-1, i}(C_i) \stackrel{\sim}{\rightarrow} C_{i-1}$ for any $i \geq 1$.
\item A morphism $f$ between two objects $(\{C_i\}_{i \in \bZ_+}, \{\phi_{i-1, i}\}_{i \geq 1})$, $(\{D_i\}_{i \in \bZ_+}, \{\psi_{i-1, i}\}_{i \geq 1})$ is a set of arrows $\{f_i: C_i \rightarrow D_i\}_{i \in \bZ_+}$ satisfying some compatability conditions.

\end{itemize}

This category is an inverse limit of the system $((\mathcal{C}_{i})_{i \in \bZ_+}, (\mathcal{F}_{i-1, i})_{i \geq 1})$ in the $(2, 1)$-category of categories with functors and natural isomorphisms. It is easily seen (see Section \ref{sec:inv_limit_cat}) that if the original categories $\mathcal{C}_{i}$ were pre-additive (resp. additive, abelian), and the functors $\mathcal{F}_{i-1, i}$ were linear (resp. additive, exact), then the inverse limit is again pre-additive (resp. additive, abelian).

One can also show that if the original categories $\mathcal{C}_{i}$ were monoidal (resp. symmetric monoidal, rigid symmetric monoidal) categories, and the functors $\mathcal{F}_{i-1, i}$ were, monoidal (resp. symmetric monoidal functors), then the inverse limit is again a monoidal (resp. symmetric monoidal, rigid symmetric monoidal) category.

\subsection{Motivating example: rings}\label{ssec:intro_motiv_ex_rings}
We now consider the motivating example.

First of all, consider the inverse system of rings of symmetric polynomials 
$$  ... \rightarrow \bZ[x_1, ..., x_n]^{S_n} \rightarrow \bZ[x_1, ..., x_{n-1}]^{S_{n-1}} \rightarrow ... \rightarrow \bZ[x_1] \rightarrow \bZ$$
with the homomorphisms given by $p(x_1, ..., x_n) \mapsto p(x_1, ..., x_{n-1}, 0)$.

We also consider the ring $\Lambda_{\bZ}$ of symmetric functions in infinitely many variables. This ring is defined as follows: first, consider the ring $\bZ[x_1, x_2, ...]^{\cup_{n \geq 0} S_n}$ of all power series with integer coefficients in infinitely many indeterminates $x_1, x_2, ...$ which are invariant under any permutation of indeterminates. The ring $\Lambda_{\bZ}$ is defined to be the subring of all the power series  such that the degrees of all its monomials are bounded.

We would like to describe the ring $\Lambda_{\bZ}$ as an inverse limit of the former inverse system.  
\begin{itemize}
 \item[{\bf $1$-st approach:}]

 The following construction is described in \cite[Chapter I]{Mac}. Take the inverse limit $\varprojlim_{n \geq 0} \bZ[x_1, ..., x_n]^{S_n}$ (this is, of course, a ring, isomorphic to $\bZ[x_1, x_2, ...]^{\cup_{n \geq 0} S_n}$), and consider only those elements $(p_n)_{n \geq 0}$ for which $deg(p_n)$ is a bounded sequence. These elements form a subring of $\varprojlim_{n \geq 0} \bZ[x_1, ..., x_n]^{S_n}$ which is isomorphic to the ring of symmetric functions in infinitely many variables.

  \item[{\bf $2$-nd approach:}]

  Note that the notion of degree gives a $\bZ_+$-grading on each ring $\bZ[x_1, ..., x_n]^{S_n}$, and on the ring $\Lambda_{\bZ}$. The morphisms $\bZ[x_1, ..., x_n]^{S_n} \rightarrow \bZ[x_1, ..., x_{n-1}]^{S_{n-1}}$ respect this grading; furthermore, they do not send to zero any polynomial of degree $n-1$ or less, so they define an isomorphism between the $i$-th grades of $\bZ[x_1, ..., x_n]^{S_n}$ and $\bZ[x_1, ..., x_{n-1}]^{S_{n-1}}$ for any $i < n$. One can then see that $\Lambda_{\bZ}$ is an inverse limit of the rings $\bZ[x_1, ..., x_n]^{S_n}$ in the category of $\bZ_+$-graded rings, and its $n$-th grade is isomorphic to the $n$-th grade of $\bZ[x_1, ..., x_n]^{S_n}$.
\end{itemize}

\subsection{Motivating example: categories}
We now move on to the categorical version of the same result.

Let $GL_n(\bC)$ (denoted by $GL_n$ for short) be the general linear group over $\bC$. We have an inclusion $GL_n \subset GL_{n+1}$ with the matrix $A \in GL_n$ corresponding to a block matrix $A' \in GL_{n+1}$ which has $A$ as the upper left $n \times n$-block, and $1$ in the lower right corner (the rest of the entries are zero). One can consider a similar inclusion of Lie algebras $\gl_n \subset \gl_{n+1}$.

Next, we consider the polynomial representations of the algebraic group $GL_n$ (alternatively, the Lie algebra $\gl_n$): these are the representations $\rho: GL_n \rightarrow \Aut(V)$ which can be extended to an algebraic map $Mat_{n \times n} (\bC) \rightarrow \End(V)$. These representations are direct summands of finite sums of tensor powers of the tautological representation $\bC^n$ of $GL_n$.

The category of polynomial representations of $GL_n$, denoted by $Rep(\gl_n)_{poly}$, is a semisimple symmetric monoidal category, with simple objects indexed by integer partitions with at most $n$ parts. The Grothendieck ring of this category is isomorphic to $\bZ[x_1, ..., x_n]^{S_n}$.

We also have functors $$\InnaD{\mathfrak{Res}}_{n-1, n} = ( \cdot)^{E_{n, n}} : Rep(\gl_{n})_{poly} \rightarrow Rep(\gl_{n-1})_{poly}$$ On the Grothendieck rings, these functors induce the homomorphisms $$\bZ[x_1, ..., x_n]^{S_n} \rightarrow \bZ[x_1, ..., x_{n-1}]^{S_{n-1}} \; \; \; \; p(x_1, ..., x_n) \mapsto p(x_1, ..., x_{n-1}, 0)$$ discussed above.

Finally, we consider the infinite-dimensional group $GL_{\infty} = \bigcup_{ n \geq 0} GL_n$, and its Lie algebra $\gl_{\infty} = \bigcup_{n \geq 0} \gl_n$. 
The category of polynomial representations of this group (resp. Lie algebra) is denoted by $Rep(\gl_{\infty})_{poly}$, and it is the free Karoubian symmetric monoidal category generated by one object (the tautological representation $\bC^{\infty}$ of $GL_{\infty}$). It is also known that this category is equivalent to the category of strict polynomial functors of finite degree (c.f. \cite{HY}), it is semisimple, and its Grothendieck ring is isomorphic to the ring $\Lambda_{\bZ}$. 

The category $Rep(\gl_{\infty})_{poly}$ possesses symmetric monoidal functors $$\Gamma_n : Rep(\gl_{\infty})_{poly} \rightarrow Rep(\gl_n)_{poly}$$ with the tautological representation of $\gl_{\infty}$ being sent to tautological representation of $\gl_n$. These functors are compatible with the functors $\InnaD{\mathfrak{Res}}_{n-1, n}$ (i.e. $\Gamma_{n-1} \cong \InnaD{\mathfrak{Res}}_{n-1, n} \circ \Gamma_n$), and the functor $\Gamma_n$ induces the homomorphism $$\Lambda_{\bZ} \rightarrow \bZ[x_1, ..., x_{n}]^{S_{n}} \; \; \; \; p(x_1, ..., x_n, x_{n+1}, ...) \mapsto p(x_1, ..., x_{n}, 0, 0, ...)$$

This gives us a fully faithful functor $\Gamma_{\text{lim}}:Rep(\gl_{\infty})_{poly} \rightarrow \varprojlim_{n \geq 0} Rep(\gl_n)_{poly}$. 

Finding a description of the image of the functor $\Gamma_{\text{lim}}$ inspires the following two frameworks for ``special'' inverse limits, which turn out to be useful in other cases as well.

\subsection{Restricted inverse limit of categories}


To define the restricted inverse limit, we work with categories $\mathcal{C}_{i}$ which are finite-length categories; namely, abelian categories where each object has a (finite) Jordan-Holder filtration. We require that the functors $\mathcal{F}_{i-1, i}$ be ``shortening'': this means that these are exact functors such that given an object $C \in \mathcal{C}_i$, we have $$\ell_{\mathcal{C}_{i-1}} (\mathcal{F}_{i-1, i}(C)) \leq \ell_{\mathcal{C}_{i}}(C)$$

In that case, it makes sense to consider the full subcategory of $\varprojlim_{i \in \bZ_+} \mathcal{C}_i$ whose objects are of the form $(\{C_i\}_{i \in \bZ_+}, \{\phi_{i-1, i}\}_{i \geq 1})$, with $ \{ \ell_{\mathcal{C}_{n}}(C_n) \}_{n \geq 0}$ being a bounded sequence (the condition on the functors implies that this sequence is weakly increasing).

This subcategory will be called the ``restricted'' inverse limit of categories $\mathcal{C}_{i}$ and will be denoted by $\varprojlim_{i \in \bZ_+, \text{ restr}}  \mathcal{C}_{i}$. It is the inverse limit of the categories $\mathcal{C}_{i}$ in the $(2, 1)$-category of finite-length categories and shortening functors.

Considering the restricted inverse limit of the categories $Rep(\gl_n)_{poly}$, we obtain a functor $$ \Gamma_{\text{lim}}:Rep(\gl_{\infty})_{poly} \rightarrow \varprojlim_{n \geq 0, \text{ restr}} Rep(\gl_n)_{poly}$$ It is easy to see that $ \Gamma_{\text{lim}}$ is an equivalence. Note that in terms of Grothendieck rings, this construction corresponds to the first approach described in Subsection \ref{ssec:intro_motiv_ex_rings}.

\subsection{Inverse limit of categories with filtrations}

Another construction of the inverse limit is as follows: let $K$ be a filtered poset, and assume that our categories $\mathcal{C}_{i}$ have a $K$-filtration on objects; that is, we assume that for each $k \in K$, there is a full subcategory $Fil_k(\mathcal{C}_{i})$, and the functors $\mathcal{F}_{i-1, i}$ respect this filtration (note that if we consider abelian categories and exact functors, we should require that the subcategories be Serre subcategories). 

We can then define a full subcategory $\varprojlim_{i \in \bZ_+, K-filtr}  \mathcal{C}_{i}$ of $\varprojlim_{i \in \bZ_+} \mathcal{C}_i$ whose objects are of the form $(\{C_i\}_{i \in \bZ_+}, \{\phi_{i-1, i}\}_{i \geq 1})$ such that there exists $k \in K$ for which $C_i \in Fil_k(\mathcal{C}_{i})$ for any $i \geq 0$. 

The category $\varprojlim_{i \in \bZ_+, K-filtr}  \mathcal{C}_{i}$ is automatically a category with a $K$-filtration on objects. It is the  inverse limit of the categories $\mathcal{C}_{i}$ in the $(2, 1)$-category of categories with $K$-filtrations on objects, and functors respecting these filtrations.

\begin{remark}
 \InnaC{A more general way to describe this setting would be the following.
 
 Assume that for each $i$, the category $\mathcal{C}_i$ is a direct limit of a system $$\left( (\mathcal{C}_i^k)_{k \in \bZ_+}, \left( \mathcal{G}_i^{k-1, k}: \mathcal{C}_i^{k-1} \rightarrow \mathcal{C}_i^k \right) \right)$$ Furthermore, assume that the functors $\mathcal{F}_{i-1, i}$ induce functors $\mathcal{F}_{i-1, i}^k: \mathcal{C}_{i-1}^k \rightarrow \mathcal{C}_i^k$ for any $k \in \bZ_+$, and that the latter are compatible with the functors $\mathcal{G}_{i}^{k-1, k}$. One can then define the category 
$$ \varinjlim_{k \in K} \varprojlim_{i \in \bZ_+} \mathcal{C}_i^k$$ which will be the ``directed'' inverse limit of the system. When $\mathcal{C}_i^k := Fil_k(\mathcal{C}_{i})$ and $\mathcal{G}_i^{k-1, k}$ are inclusion functors, the directed inverse limit coincides with $\varprojlim_{i \in \bZ_+, K-filtr}  \mathcal{C}_{i}$. 

All the statements in this note concerning inverse limits of categories with filtrations can be translated to the language of directed inverse limits.
 }
\end{remark}

Considering appropriate $\bZ_+$-filtrations on the objects of the categories $Rep(\gl_n)_{poly}$, we obtain a functor $$ \Gamma_{\text{lim}}:Rep(\gl_{\infty})_{poly} \rightarrow \varprojlim_{n \geq 0, \bZ_+ -filtr} Rep(\gl_n)_{poly}$$ One can show that this is an equivalence. Note that in terms of Grothendieck rings, this construction corresponds to the second approach described in Subsection \ref{ssec:intro_motiv_ex_rings} (in fact, in this particular case one can use a grading instead of a filtration; however, this is not the case in \cite{EA}).

\mbox{}

These two ``special'' inverse limits may coincide, as it happens in the case of the categories $Rep(\gl_n)_{poly}$, and in \cite{EA}. We give a sufficient condition for this to happen. In such case, each approach has its own advantages. 

The restricted inverse limit approach does not involve defining additional structures on the categories, and shows that the constructed inverse limit category does not depend on the choice of filtration, as long as the filtration satisfies some relatively mild conditions. 

Yet the object-filtered inverse limit approach is sometimes more convenient to work with, as it happens in \cite{EA}.

\section{Conventions}

Let $\mathcal{C}$ be an abelian category, and $C$ be an object of $\mathcal{C}$. A {\it Jordan-Holder
filtration} for $C$ is a finite sequence of subobjects of $C$
$$0 = C_0 \subset C_1 \subset ... \subset C_n = C$$
such that each subquotient $C_{i+1} / C_i$ is simple.

 The Jordan-Holder filtration might not be unique, but the simple factors $C_{i+1} / C_i$ are unique (up to re-ordering and isomorphisms). Consider the multiset of the simple factors: each simple factor is considered as an isomorphism class of simple objects, and its multiplicity is the multiplicity of its isomorphism class in the Jordan-Holder filtration of $C$. This multiset is denoted by $JH(C)$, and its elements are called the {\it Jordan-Holder components} of $C$. 
 
 The {\it length} of the object $C$, denoted by $\ell_{\mathcal{C}}(C)$, is defined to be the size of the finite multiset $JH(C)$.

\begin{definition}
An abelian category $\mathcal{C}$ is called a {\it finite-length category} if every object admits a Jordan-Holder filtration.
 
\end{definition}

\section{Inverse limit of categories}\label{sec:inv_limit_cat}
In this section we discuss the notion of an inverse limit of categories, based on \cite[Definition 1]{WW}, \cite[Section 5]{Sch}.
This is the inverse limit in the $(2, 1)$-category of categories with functors and natural isomorphisms.

\subsection{Inverse limit of categories}\label{subsec:inv_limit_def}
Consider the partially ordered set $(\bZ_+, \leq) $. We consider the following data (``system''):
\begin{enumerate}
 \item Categories $\mathcal{C}_i$ for each $i \in \bZ_+$.
 \item Functors $\mathcal{F}_{i-1, i}: \mathcal{C}_i \rightarrow \mathcal{C}_{i-1}$ for each $i \geq 1$. 
%
\end{enumerate}

\begin{definition}
Given the above data, we define the inverse limit category $\varprojlim_{i \in \bZ_+} \mathcal{C}_i$  to be the following category:
\begin{itemize}
 \item The objects are pairs $(\{C_i\}_{i \in \bZ_+}, \{\phi_{i-1, i}\}_{i \geq 1})$ where $C_i \in \mathcal{C}_i$ for each $i \in \bZ_+$ and $\phi_{i-1, i}: \mathcal{F}_{i-1, i}(C_i) \stackrel{\sim}{\rightarrow} C_{i-1}$ for any $i \geq 1$.
\item A morphism $f$ between two objects $(\{C_i\}_{i \in \bZ_+}, \{\phi_{i-1, i}\}_{i \geq 1})$, $(\{D_i\}_{i \in \bZ_+}, \{\psi_{i-1, i}\}_{i \geq 1})$ is a set of arrows $\{f_i: C_i \rightarrow D_i\}_{i \in \bZ_+}$ such that for any $i \geq 1$, the following diagram is commutative:
$$ \begin{CD}
    \mathcal{F}_{i-1, i}(C_i) @>{\phi_{i-1, i}}>> C_{i-1}\\
    @V{\mathcal{F}_{i-1, i}(f_i)}VV @V{f_{i-1}}VV \\
    \mathcal{F}_{i-1, i}(D_i) @>{\psi_{i-1, i}}>> D_{i-1}
   \end{CD}$$
 Composition of morphisms is component-wise.
\end{itemize}

\end{definition}

The definition of $\varprojlim_{i \in \bZ_+} \mathcal{C}_i$ implies that for each $i \in \bZ_+$, we can define functors
\begin{align*}
\mathbf{Pr}_i: \varprojlim_{i \in \bZ_+} \mathcal{C}_i &\rightarrow \mathcal{C}_i \\
C=(\{C_i\}_{i \in \bZ_+}, \{\phi_{i-1, i}\}_{i \geq 1})) &\mapsto C_i \\
f=\{f_i: C_i \rightarrow D_i\}_{i \in \bZ_+}  &\mapsto f_i
\end{align*}
which satisfy the following property (this property follows directly from the definition of $\varprojlim_{i \in \bZ_+} \mathcal{C}_i$): 
\begin{lemma}
 For any $i \geq 1$, $\mathcal{F}_{i-1, i} \circ \mathbf{Pr}_i \cong \mathbf{Pr}_{i-1}$, with a natural isomorphism given by:
 $$(\mathcal{F}_{i-1, i} \circ \mathbf{Pr}_i)(C) \stackrel{\phi_{i-1, i}}{\rightarrow} \mathbf{Pr}_{i-1}(C)$$
 (here $C= (\{C_i\}_{i \in \bZ_+}, \{\phi_{i-1, i}\}_{i \geq 1}))$).
\end{lemma}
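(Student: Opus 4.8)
The plan is to verify directly that the assignment sending an object $C = (\{C_j\}_{j}, \{\phi_{j-1,j}\}_{j \geq 1})$ of $\varprojlim_{i \in \bZ_+} \mathcal{C}_i$ to the arrow $\phi_{i-1,i}$ defines a natural isomorphism from $\mathcal{F}_{i-1,i} \circ \mathbf{Pr}_i$ to $\mathbf{Pr}_{i-1}$. There are exactly two things to check, and both will follow simply by unwinding the definitions of objects and of morphisms in the inverse limit category.

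First I would observe that the proposed component at $C$ is an arrow of the correct source and target, and that it is invertible. Indeed, by the definitions of $\mathbf{Pr}_i$ and $\mathbf{Pr}_{i-1}$ we have $(\mathcal{F}_{i-1,i} \circ \mathbf{Pr}_i)(C) = \mathcal{F}_{i-1,i}(C_i)$ and $\mathbf{Pr}_{i-1}(C) = C_{i-1}$, so $\phi_{i-1,i}$ is precisely an arrow $\mathcal{F}_{i-1,i}(C_i) \to C_{i-1}$; by the definition of an object of the inverse limit, this arrow is an isomorphism. Hence every component is invertible, and it remains only to prove naturality.

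Second, for naturality I would take an arbitrary morphism $f = \{f_j : C_j \to D_j\}_{j \in \bZ_+}$ from $C$ to $D = (\{D_j\}_{j}, \{\psi_{j-1,j}\}_{j \geq 1})$ and write out the naturality square of the two functors at $f$. Applying the definitions of $\mathbf{Pr}_i$ and $\mathbf{Pr}_{i-1}$ together with the functoriality of $\mathcal{F}_{i-1,i}$, this square becomes exactly
$$ \begin{CD}
    \mathcal{F}_{i-1, i}(C_i) @>{\phi_{i-1, i}}>> C_{i-1}\\
    @V{\mathcal{F}_{i-1, i}(f_i)}VV @V{f_{i-1}}VV \\
    \mathcal{F}_{i-1, i}(D_i) @>{\psi_{i-1, i}}>> D_{i-1}
   \end{CD}$$
which is precisely the commutativity condition imposed on $f$ in the definition of a morphism in $\varprojlim_{i \in \bZ_+} \mathcal{C}_i$. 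Thus naturality holds automatically, and combining the two checks yields the asserted natural isomorphism. I do not expect any genuine obstacle here: the statement is an immediate consequence of the definitions, and indeed the compatibility condition placed on morphisms was evidently designed exactly so that the projection functors $\mathbf{Pr}_i$ satisfy this relation.
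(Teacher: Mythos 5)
Your proof is correct and follows exactly the route the paper intends: the paper states this lemma without a written-out proof, noting only that it ``follows directly from the definition,'' and your two checks (invertibility of each component $\phi_{i-1,i}$ from the definition of objects, and naturality from the commutativity condition defining morphisms) are precisely the unwinding of that remark. Nothing is missing.
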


Let $\mathcal{A}$ be a category, together with a set of functors $\mathcal{G}_i: \mathcal{A} \rightarrow \mathcal{C}_i$ which satisfy: for any $i  \geq 1$, there exists a natural isomorphism
$$ \eta_{i-1, i}:\mathcal{F}_{i-1, i} \circ \mathcal{G}_i \rightarrow \mathcal{G}_{i-1} $$

Then $\varprojlim_{i \in \bZ_+} \mathcal{C}_i$ is universal among such categories; that is, we have a functor 
\begin{align*}
\mathcal{G}: \mathcal{A} &\rightarrow \varprojlim_{i \in \bZ_+} \mathcal{C}_i \\
A &\mapsto (\{\mathcal{G}_i(A)\}_{i \in \bZ_+}, \{\eta_{i-1, i}\}_{i \geq 1}) \\
(f: A_1 \rightarrow A_2 ) &\mapsto \{f_i:= \mathcal{G}_i(f)\}_{i \in \bZ_+} 
\end{align*}
and $\mathcal{G}_i \cong \mathbf{Pr}_i \circ \mathcal{G}$ for every $i \in \bZ_+$.

Finally, we give the following simple lemma:
\begin{lemma}\label{lem:inv_limit_stabilizing_cat}
Let $N \in \bZ_+$, and assume that for any $i \geq N$, $\mathcal{F}_{i-1, i}$ is an equivalence. Then $\mathbf{Pr}_i: \varprojlim_{j \in \bZ_+} \mathcal{C}_j \rightarrow \mathcal{C}_i$ is an equivalence for any $i \geq N$.
\end{lemma}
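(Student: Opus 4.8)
The plan is to first prove that $\mathbf{Pr}_N$ is an equivalence, and then to bootstrap to all $i \geq N$ using the natural isomorphism $\mathcal{F}_{i-1,i} \circ \mathbf{Pr}_i \cong \mathbf{Pr}_{i-1}$ supplied by the preceding lemma. For the bootstrap, observe that for $i > N$ the functor $\mathcal{F}_{i-1,i}$ is an equivalence, so if $\mathcal{G}$ denotes a quasi-inverse then $\mathbf{Pr}_i \cong \mathcal{G} \circ \mathcal{F}_{i-1,i} \circ \mathbf{Pr}_i \cong \mathcal{G} \circ \mathbf{Pr}_{i-1}$; hence $\mathbf{Pr}_i$ is an equivalence whenever $\mathbf{Pr}_{i-1}$ is, and an induction starting at level $N$ finishes the argument. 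Thus all the content is concentrated in showing that $\mathbf{Pr}_N$ is fully faithful and essentially surjective.

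For essential surjectivity, given $X \in \mathcal{C}_N$ I would construct an object $(\{C_i\}, \{\phi_{i-1,i}\})$ of the limit with $C_N = X$. Below level $N$ there is no choice to make: set $C_{i-1} := \mathcal{F}_{i-1,i}(C_i)$ with $\phi_{i-1,i} := \id$, descending from $C_N$ down to $C_0$. Above level $N$, for each $i > N$ fix a quasi-inverse $\mathcal{G}_i$ to the equivalence $\mathcal{F}_{i-1,i}$ together with the counit isomorphism $\mathcal{F}_{i-1,i}\circ \mathcal{G}_i \cong \id$; then define $C_i := \mathcal{G}_i(C_{i-1})$ inductively and take $\phi_{i-1,i}$ to be the corresponding counit isomorphism. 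This yields a well-defined object of $\varprojlim_j \mathcal{C}_j$ restricting to $X$ under $\mathbf{Pr}_N$.

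For full faithfulness, I would show that for objects $C=(\{C_i\},\{\phi\})$ and $D=(\{D_i\},\{\psi\})$ the assignment $f \mapsto f_N$ is a bijection $\Hom(C,D) \to \Hom_{\mathcal{C}_N}(C_N,D_N)$. The commuting squares defining a morphism of the limit let me solve for all remaining components in terms of $f_N$: for $i \leq N$ the relation $f_{i-1}\circ \phi_{i-1,i} = \psi_{i-1,i}\circ \mathcal{F}_{i-1,i}(f_i)$ with $\phi,\psi$ invertible determines $f_{i-1}$ from $f_i$, so $f_{N-1},\dots,f_0$ are forced; for $i > N$ the same relation reads $\mathcal{F}_{i-1,i}(f_i) = \psi_{i-1,i}^{-1}\circ f_{i-1}\circ \phi_{i-1,i}$, and since $\mathcal{F}_{i-1,i}$ is fully faithful there is a unique $f_i$ with this image, so $f_{N+1}, f_{N+2}, \dots$ are forced as well. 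This uniqueness gives injectivity, while taking these forced components as a definition, starting from an arbitrary $g\colon C_N\to D_N$, produces a genuine morphism of the limit with $f_N=g$, giving surjectivity.

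The categorical input is only that equivalences are fully faithful and essentially surjective; the substance is the bookkeeping. The step I expect to demand the most care is checking that the components manufactured above level $N$, in both parts, assemble into an honest object (resp. morphism) of the inverse limit — that is, that all of the infinitely many structure squares above level $N$ genuinely commute. This follows from the naturality of the counit isomorphisms of the chosen adjoint equivalences, but it is precisely the point where one must use quasi-inverses coherently rather than making unrelated choices at each level.
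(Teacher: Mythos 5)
Your proof is correct, but it is organized differently from the paper's. The paper fixes $i \geq N$ and builds an explicit quasi-inverse $\mathcal{G}\colon \mathcal{C}_i \to \varprojlim_{j} \mathcal{C}_j$ out of the composites $\mathcal{F}_{ji}$ (for $j \leq i$) and chosen quasi-inverses of the $\mathcal{F}_{ij}$ (for $j > i$), invokes the universal property to assemble these into a single functor, and then writes down an explicit natural isomorphism $\theta\colon \id \Rightarrow \mathcal{G}\circ\mathbf{Pr}_i$; there is no induction on $i$. You instead reduce to $i=N$ by bootstrapping along $\mathcal{F}_{i-1,i}\circ\mathbf{Pr}_i \cong \mathbf{Pr}_{i-1}$ and then verify full faithfulness and essential surjectivity of $\mathbf{Pr}_N$ directly; your full-faithfulness argument, where each commuting square uniquely determines the next component of a morphism (downward using invertibility of the $\phi$'s and $\psi$'s, upward using full faithfulness of $\mathcal{F}_{i-1,i}$), is arguably cleaner than the paper's verification of $\mathcal{G}\circ\mathbf{Pr}_i\cong\id$, while the paper's route has the advantage of producing the quasi-inverse functor explicitly. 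One remark: your closing worry about having to ``use quasi-inverses coherently'' is unfounded for this particular inverse system. Since the index poset is $\bZ_+$ and the data of an object (resp.\ morphism) of the limit consists only of the isomorphisms $\phi_{i-1,i}$ (resp.\ one commuting square) between \emph{consecutive} levels, with no compatibility imposed across non-adjacent levels, every condition you need is satisfied level-by-level by construction, and genuinely independent choices of quasi-inverse at each level cause no harm. That caution would become substantive only for a system indexed by a general poset with composite transition functors subject to coherence.
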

\begin{proof}
Set $\mathcal{F}_{ij} := \mathcal{F}_{i, i+1} \circ ... \circ  \mathcal{F}_{j-1, j}$ for any $i \leq j$ (in particular, $\mathcal{F}_{ii} := \id_{\mathcal{C}_i}$).

Fix $i \geq N$. Let $j \geq i$; then $\mathcal{F}_{ij}$ is an equivalence, i.e. we can find a functor $$\mathcal{G}_{j}: \mathcal{C}_i \rightarrow \mathcal{C}_j$$ such that $\mathcal{F}_{ij} \circ \mathcal{G}_{j} \cong \id_{\mathcal{C}_i}$, and $\mathcal{G}_{j} \circ \mathcal{F}_{ij} \cong \id_{\mathcal{C}_j}$ (for $j :=i$, we put $\mathcal{G}_{i}:= \id_{\mathcal{C}_i}$).

For any $j > i$, fix natural transformations $$\eta_{j-1, j}:\mathcal{F}_{j-1, j} \circ  \mathcal{G}_{j} \stackrel{\sim}{\rightarrow} \mathcal{G}_{j-1}$$

For any $j \leq i$, put $\mathcal{G}_{j}:= \mathcal{F}_{ji}$, and $\eta_{j-1, j} := \id$.

Then the universal property of $\varprojlim_{j \in \bZ_+} \mathcal{C}_j$ implies that there exists a functor $$\mathcal{G}: \mathcal{C}_i \rightarrow \varprojlim_{j \in \bZ_+} \mathcal{C}_j$$ such that $\mathbf{Pr}_j \circ \mathcal{G} \cong \mathcal{G}_{j}$ for any $j$.
The functor $\mathcal{G}$ is given by
\begin{align*}
\mathcal{G}: \mathcal{C}_i &\rightarrow \varprojlim_{j \in \bZ_+} \mathcal{C}_j \\
C &\mapsto (\{\mathcal{G}_j(C)\}_{j \in \bZ_+}, \{\eta_{j-1, j}\}_{j \geq 1}) \\
f: C \rightarrow C' &\mapsto \{f_j:= \mathcal{G}_j(f)\}_{j \in \bZ_+} 
\end{align*}

In particular, we have: $\mathbf{Pr}_i \circ\mathcal{G} \cong \id_{\mathcal{C}_i}$. It remains to show that $\mathcal{G} \circ \mathbf{Pr}_i \cong \id_{\varprojlim_{j \in \bZ_+} \mathcal{C}_j}$, and this will prove that $\mathbf{Pr}_i$ is an equivalence of categories.

For any $C \in \varprojlim_{j \in \bZ_+} \mathcal{C}_j$, $C: =(\{C_j\}_{j \in \bZ_+}, \{\phi_{j-1, j}\}_{j \geq 1})$, and for any $l\leq j$ we define isomorphisms $\phi_{lj}: \mathcal{F}_{lj}(C_j) \rightarrow C_l$ given by 
$$\phi_{lj}:= \phi_{l, l+1} \circ \mathcal{F}_{l, l+1}( \phi_{l+1, l+2} \circ \mathcal{F}_{l+1, l+2}( \phi_{l+2, l+3} \circ... \circ \mathcal{F}_{j-2, j-1}(\phi_{j-1, j})...)) $$ Define
$\theta(C) := \{\theta(C)_j: C_j \rightarrow \mathbf{Pr}_j(\mathcal{G}(C_i)) \cong \mathcal{G}_j(C_i)\}_{j \in I}$ by setting $$\theta(C)_j := \begin{cases}
          \phi_{\InnaA{ji}}^{-1} &\text{ if } j \leq i \\
          \mathcal{G}_{j}(\phi_{ij}) &\text{ if } j > i                                                                                                                                  \end{cases}
$$

Now, let $C: =(\{C_j\}_{j \in \bZ_+}, \{\phi_{j-1, j}\}_{j \geq 1})$, $D:=(\{D_j\}_{j \in \bZ_+}, \{\psi_{j-1, j}\}_{j \geq 1})$ be objects in $\varprojlim_{j \in \bZ_+} \mathcal{C}_j$, together with a morphism $f: C \rightarrow D$, $f:=\{f_j: C_j \rightarrow D_j\}_{j \in \bZ_+}$.

Then the diagram 
$$\begin{CD}
   C @>{\theta(C)}>> (\mathcal{G} \circ \mathbf{Pr}_i)(C) \\
   @V{f}VV @V{(\mathcal{G} \circ \mathbf{Pr}_i)(f)}VV \\
   D @>{\theta(D)}>> (\mathcal{G}  \circ \mathbf{Pr}_i)(D)
  \end{CD}
$$
is commutative, since for $j \leq i$, the diagrams
$$\begin{CD}
   C_j @>{\phi_{ji}^{-1}}>> \mathbf{Pr}_j(\mathcal{G}(C_i)) \cong \mathcal{G}_j(C_i) \\
   @V{f_j}VV @V{\mathcal{G}_j(f_i)}VV \\
   D_j @>{\psi_{ji}^{-1}}>> \mathbf{Pr}_j(\mathcal{G}(D_i)) \cong \mathcal{G}_j(D_i)
  \end{CD}
$$
are commutative, and for $j > i$, the diagrams
$$\begin{CD}
   C_j @>{\mathcal{G}_j(\phi_{ij})}>> \mathbf{Pr}_j(\mathcal{G}(C_i))  \cong \mathcal{G}_j(C_i)\\
   @V{f_j}VV @V{\mathcal{G}_j(f_i)}VV \\
   D_j @>{\mathcal{G}_j(\psi_{ij})}>> \mathbf{Pr}_j(\mathcal{G}(D_i))  \cong \mathcal{G}_j(D_i)
  \end{CD}
$$
are commutative.

\end{proof}

\subsection{Inverse limit of pre-additive, additive and abelian categories}\label{subsec:abel_inverse_limit}
In this subsection, we give some more or less trivial properties of the inverse limit corresponding to the system $((\mathcal{C}_{i})_{i \in \bZ_+}, (\mathcal{F}_{i-1, i})_{i \geq 1})$ depending on the properties of the categories $\mathcal{C}_{i}$ and the functors $\mathcal{F}_{i-1, i}$.

\begin{lemma}
 Assume the categories $\mathcal{C}_i$ are $\bC$-linear pre-additive categories (i.e. the $\Hom$-spaces in each $\mathcal{C}_i$ are complex vector spaces), and the functors $\mathcal{F}_{i-1, i}$ are $\bC$-linear. Then the category $\varprojlim_{i \in \bZ_+} \mathcal{C}_i$ is automatically a $\bC$-linear pre-additive category:
 
 given $f,g : C \rightarrow D$ in $\varprojlim_{i \in \bZ_+} \mathcal{C}_i$, where $C= (\{C_i\}_{i \in \bZ_+}, \{\phi_{i-1, i}\}_{i \geq 1})$, $D=(\{D_i\}_{i \in \bZ_+}, \{\psi_{i-1, i}\}_{i \geq 1})$, $f= \{f_i: C_i \rightarrow D_i\}_{i \in \bZ_+}, g= \{g_i: C_i \rightarrow D_i\}_{i \in \bZ_+}$, we have:
 $$ \alpha f+ \beta g := \{(\alpha f_i +\beta g_i): C_i \rightarrow D_i\}_{i \in \bZ_+}$$
 where $\alpha, \beta \in \bC$.
 
 The functors $\mathbf{Pr}_i$ are then $\bC$-linear.
\end{lemma}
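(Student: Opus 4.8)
The plan is to define the $\bC$-vector space structure on each $\Hom$-space of $\varprojlim_{i \in \bZ_+} \mathcal{C}_i$ componentwise, exactly as in the statement, and then to check three things in turn: that the componentwise linear combination $\alpha f + \beta g$ is genuinely a morphism in the inverse limit (i.e. that it satisfies the required commutativity condition), that the resulting operations endow each $\Hom$-space with the structure of a complex vector space, and that composition is $\bC$-bilinear. The $\bC$-linearity of the functors $\mathbf{Pr}_i$ will then be immediate from the componentwise definition.

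The only point that actually uses the hypotheses on the functors is the first: well-definedness. Fixing $i \geq 1$, I would check that the defining square commutes for the family $\{\alpha f_i + \beta g_i\}$, that is, that
$$ (\alpha f_{i-1} + \beta g_{i-1}) \circ \phi_{i-1,i} = \psi_{i-1,i} \circ \mathcal{F}_{i-1,i}(\alpha f_i + \beta g_i). $$
Here I would expand the right-hand side using the $\bC$-linearity of $\mathcal{F}_{i-1,i}$, writing $\mathcal{F}_{i-1,i}(\alpha f_i + \beta g_i) = \alpha \mathcal{F}_{i-1,i}(f_i) + \beta \mathcal{F}_{i-1,i}(g_i)$, and expand the left-hand side using the $\bC$-bilinearity of composition in the pre-additive category $\mathcal{C}_{i-1}$. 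Both sides then split into an $\alpha$-term and a $\beta$-term, and these agree precisely because $f = \{f_i\}$ and $g = \{g_i\}$ are individually morphisms in $\varprojlim_{i \in \bZ_+} \mathcal{C}_i$, i.e. their own defining squares commute.

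Once well-definedness is established, the remaining claims are routine. The vector-space axioms for the $\Hom$-space from $C$ to $D$ hold because they hold in each $\Hom_{\mathcal{C}_i}(C_i, D_i)$ and both the operations and the zero morphism $\{0_i\}$ are defined componentwise; similarly, $\bC$-bilinearity of composition follows from the componentwise definition of composition together with $\bC$-bilinearity of composition in each $\mathcal{C}_i$. Finally, $\mathbf{Pr}_i(\alpha f + \beta g) = \alpha f_i + \beta g_i = \alpha \mathbf{Pr}_i(f) + \beta \mathbf{Pr}_i(g)$ by construction, so each $\mathbf{Pr}_i$ is $\bC$-linear. The main (and essentially only) obstacle is the compatibility check above, and even that reduces to distributing scalars across the composition and the functor $\mathcal{F}_{i-1,i}$; no genuine difficulty arises.
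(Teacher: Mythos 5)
Your proof is correct and matches the paper's intent: the paper states this lemma without a separate proof, treating the componentwise definition given in the statement as self-evidently well-defined, and your argument simply spells out the one genuine check (commutativity of the defining square for $\alpha f + \beta g$, using linearity of $\mathcal{F}_{i-1,i}$ and bilinearity of composition). Nothing is missing and no different route is taken.
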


\begin{lemma}
 Assume the categories $\mathcal{C}_i$ are additive categories (i.e. each $\mathcal{C}_i$ is pre-additive and has biproducts), and the functors $\mathcal{F}_{i-1, i}$ are additive. Then the category $\varprojlim_{i \in \bZ_+} \mathcal{C}_i$ is automatically a additive category:
 \begin{itemize}
  \item The zero object in $\varprojlim_{i \in \bZ_+} \mathcal{C}_i$ is $(\{0_{\mathcal{C}_i}\}_{i \in \bZ_+}, \{0\}_{i \geq 1})$.
  \item Given $ C , D$ in $\varprojlim_{i \in \bZ_+} \mathcal{C}_i$, where $C= (\{C_i\}_{i \in \bZ_+}, \{\phi_{i-1, i}\}_{i  \geq 1})$, $D=(\{D_i\}_{i \in \bZ_+}, \{\psi_{i-1, i}\}_{i \geq 1})$, we have:
 $$ C \oplus D := (\{(C_i \oplus D_i\}_{i \in \bZ_+} , \{\phi_{i-1, i} \oplus \psi_{i-1, i}\}_{i \geq 1})$$
 with obvious inclusion and projection maps.
 \end{itemize}

The functors $\mathbf{Pr}_i$ are then additive.
\end{lemma}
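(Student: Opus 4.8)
The plan is to build directly on the preceding lemma, which already shows that $\varprojlim_{i \in \bZ_+} \mathcal{C}_i$ is pre-additive with addition of morphisms computed componentwise, and in which the zero morphism $C \to D$ is the family $\{0 : C_i \to D_i\}_i$. Granting that, to upgrade to additivity it remains only to produce a zero object and finite biproducts, and the content of the lemma is precisely that the naive componentwise candidates do the job. The one genuinely load-bearing input is that the transition functors $\mathcal{F}_{i-1,i}$, being additive, preserve zero objects and biproducts up to canonical isomorphism; this is exactly what is needed to certify that the proposed tuples are legitimate objects of the inverse limit, i.e. that their structure maps $\phi_{i-1,i}$ are isomorphisms.

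First I would verify that $Z := (\{0_{\mathcal{C}_i}\}_{i}, \{0\}_{i})$ is a well-defined object. Since $\mathcal{F}_{i-1,i}$ is additive, $\mathcal{F}_{i-1,i}(0_{\mathcal{C}_i})$ is again a zero object of $\mathcal{C}_{i-1}$, so the (unique) morphism $\mathcal{F}_{i-1,i}(0_{\mathcal{C}_i}) \to 0_{\mathcal{C}_{i-1}}$ is an isomorphism and serves as $\phi_{i-1,i}$. To see that $Z$ is a zero object I would invoke the standard fact that in a pre-additive category an object is a zero object if and only if its identity endomorphism equals its zero endomorphism. Here $\id_Z = \{\id_{0_{\mathcal{C}_i}}\}_i$, and each $\id_{0_{\mathcal{C}_i}} = 0$; since the additive structure is componentwise by the preceding lemma, this gives $\id_Z = 0$, whence $Z$ is a zero object.

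Next I would treat the biproduct. Additivity of $\mathcal{F}_{i-1,i}$ yields a canonical isomorphism $\mathcal{F}_{i-1,i}(C_i \oplus D_i) \cong \mathcal{F}_{i-1,i}(C_i) \oplus \mathcal{F}_{i-1,i}(D_i)$, so $\phi_{i-1,i} \oplus \psi_{i-1,i}$ is an isomorphism and $C \oplus D$ as defined is a genuine object of the inverse limit. I would then define the structure morphisms $\iota_C, \iota_D, \pi_C, \pi_D$ componentwise from the biproduct data of each $\mathcal{C}_i$; checking that each is a morphism in $\varprojlim_{i \in \bZ_+} \mathcal{C}_i$ reduces to the compatibility square, which commutes because $\mathcal{F}_{i-1,i}$ sends the $\mathcal{C}_i$-biproduct inclusions and projections to those of the biproduct in $\mathcal{C}_{i-1}$. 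Finally, the biproduct identities $\pi_C \iota_C = \id_C$, $\pi_D \iota_D = \id_D$, and $\iota_C \pi_C + \iota_D \pi_D = \id_{C \oplus D}$ hold in each component (as $C_i \oplus D_i$ is a biproduct in $\mathcal{C}_i$), and since both composition and addition in the inverse limit are componentwise, they hold in $\varprojlim_{i \in \bZ_+} \mathcal{C}_i$. Additivity of $\mathbf{Pr}_i$ is then immediate, since by construction $\mathbf{Pr}_i$ sends $Z$ to $0_{\mathcal{C}_i}$ and $C \oplus D$ to $C_i \oplus D_i$ together with their structure maps.

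I would not expect any serious obstacle: the entire argument is a routine componentwise verification riding on the preceding lemma. The single point I would be careful to flag explicitly is that the additivity of the transition functors is precisely what makes the candidate tuples into valid objects of $\varprojlim_{i \in \bZ_+} \mathcal{C}_i$ — without it, neither $Z$ nor $C \oplus D$ would in general carry the required isomorphisms $\phi_{i-1,i}$, and the statement would fail at the level of objects before any biproduct axiom could even be tested.
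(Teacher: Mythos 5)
Your proposal is correct, and it verifies the biproduct by a genuinely different (though closely related) route. The paper's proof checks the universal properties of product and coproduct directly: given $f_C: X \rightarrow C$, $f_D: X \rightarrow D$ (resp.\ $g_C: C \rightarrow Y$, $g_D: D \rightarrow Y$), the componentwise unique induced maps $f_i: X_i \rightarrow C_i \oplus D_i$ (resp.\ $g_i$) assemble into the unique induced map in $\varprojlim_{i \in \bZ_+} \mathcal{C}_i$. You instead invoke the equational characterization of biproducts in a pre-additive category ($\pi_C \iota_C = \id_C$, $\pi_D \iota_D = \id_D$, $\iota_C \pi_C + \iota_D \pi_D = \id_{C \oplus D}$) and observe that these identities hold because composition and addition in the inverse limit are both componentwise. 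Your version supplies two checks the paper omits entirely: that the candidate tuples are legitimate objects of the inverse limit --- i.e.\ that the structure maps $\mathcal{F}_{i-1,i}(0_{\mathcal{C}_i}) \rightarrow 0_{\mathcal{C}_{i-1}}$ and $\phi_{i-1,i} \oplus \psi_{i-1,i}$ are isomorphisms, which is precisely where additivity of the transition functors is used --- and that $(\{0_{\mathcal{C}_i}\}_i, \{0\}_i)$ really is a zero object (via $\id_Z = 0$). The paper's universal-property argument avoids appealing to the equational characterization but silently relies on the facts that a morphism of the inverse limit is determined by its components and that the componentwise-induced maps satisfy the required compatibility squares. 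Both arguments are complete for the purpose at hand; yours is arguably the more self-contained of the two.
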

\begin{proof}
 Let $X, Y \in \varprojlim_{i \in \bZ_+} \mathcal{C}_i$, $X= (\{X_i\}_{i \in \bZ_+}, \{\mu_{i-1, i}\}_{i \geq 1})$, $Y=(\{Y_i\}_{i \in \bZ_+}, \{\rho_{i-1, i}\}_{i \geq 1})$, and let $f_C: X \rightarrow C$, $f_D: X \rightarrow D$, $g_C: C \rightarrow Y$, $g_D: D \rightarrow Y$ (we denote the components of the map $f_C$ by $f_{C_i}$, of the map $f_D$ by $f_{D_i}$, etc.). 
 
 Denote by $\iota_{C_i}, \iota_{D_i}, \pi_{C_i}, \pi_{D_i}$ the inclusion and projection maps between $C_i, D_i$ and $C_i \oplus D_i$. By definition, $\iota_C := \{\iota_{C_i}\}_{i \in \bZ_+}, \iota_D := \{\iota_{D_i}\}_{i \in \bZ_+}, \pi_C := \{\pi_{C_i}\}_{i \in \bZ_+}, \pi_D := \{\pi_{D_i}\}_{i \in \bZ_+}$ are the inclusion and projection maps between $C, D$ and $C \oplus D$.
 
 For each $i$, there exists a unique map $f_i: X_i \rightarrow C_i \oplus D_i$ and a unique map $g_i:  C_i \oplus D_i  \rightarrow Y_i$ such that $$ \pi_{C_i} \circ f_i = f_{C_i}, \pi_{D_i} \circ f_i =f_{D_i}, g_i \circ \iota_{C_i} = g_{C_i}, g_i \circ \iota_{D_i} = g_{D_i}$$ for any $i \in \bZ_+$.
 
 This means that we have a unique map $f: X \rightarrow C \oplus D$ and a unique map $g: C \oplus D \rightarrow Y$ such that $$\pi_{C} \circ f = f_{C}, \pi_{D} \circ f =f_{D}, g \circ \iota_{C} = g_{C}, g \circ \iota_{D} = g_{D}$$
 (these are the maps $f = \{f_i\}_i, g= \{g_i\}_i$).
 
\end{proof}

\begin{lemma}\label{lem:inv_limit_cat_isom}
 Let $f : C \rightarrow D$ in $\varprojlim_{i \in \bZ_+} \mathcal{C}_i$, where $C= (\{C_i\}_{i \in \bZ_+}, \{\phi_{i-1, i}\}_{i \geq 1})$, $D=(\{D_i\}_{i \in \bZ_+}, \{\psi_{i-1, i}\}_{i \geq 1})$, $f= \{f_i: C_i \rightarrow D_i\}_{i \in \bZ_+}$.
 
 Assume $f_i$ are isomorphisms for each $i$. Then $f$ is an isomorphism.
\end{lemma}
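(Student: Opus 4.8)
The plan is to produce an explicit two-sided inverse for $f$ inside $\varprojlim_{i \in \bZ_+} \mathcal{C}_i$. Since each $f_i : C_i \rightarrow D_i$ is an isomorphism in $\mathcal{C}_i$, the natural candidate is the family $g := \{g_i := f_i^{-1} : D_i \rightarrow C_i\}_{i \in \bZ_+}$. The only thing that needs checking is that $g$ is genuinely a morphism in the inverse limit, i.e. that it satisfies the compatibility squares from the definition; once this is done, the identities $g \circ f = \id_C$ and $f \circ g = \id_D$ follow immediately, because composition is component-wise and $g_i \circ f_i = \id_{C_i}$, $f_i \circ g_i = \id_{D_i}$ in each $\mathcal{C}_i$.

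First I would record what the hypothesis that $f$ is a morphism gives us: for every $i \geq 1$ the defining square commutes, i.e. $\psi_{i-1, i} \circ \mathcal{F}_{i-1, i}(f_i) = f_{i-1} \circ \phi_{i-1, i}$. I would also note two facts that make the argument work: the transition maps $\phi_{i-1, i}$ and $\psi_{i-1, i}$ are isomorphisms by the very definition of objects of the inverse limit, and $\mathcal{F}_{i-1, i}$, being a functor, preserves isomorphisms, so $\mathcal{F}_{i-1, i}(f_i)$ is invertible with inverse $\mathcal{F}_{i-1, i}(f_i^{-1}) = \mathcal{F}_{i-1, i}(g_i)$.

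The core step is then a one-line diagram chase. Starting from $\psi_{i-1, i} \circ \mathcal{F}_{i-1, i}(f_i) = f_{i-1} \circ \phi_{i-1, i}$, I would compose on the left with $f_{i-1}^{-1} = g_{i-1}$ and on the right with $\mathcal{F}_{i-1, i}(g_i)$, using functoriality $\mathcal{F}_{i-1, i}(f_i) \circ \mathcal{F}_{i-1, i}(g_i) = \mathcal{F}_{i-1, i}(\id_{C_i}) = \id$ to obtain
$$ g_{i-1} \circ \psi_{i-1, i} = \phi_{i-1, i} \circ \mathcal{F}_{i-1, i}(g_i), $$
which is exactly the commutativity of the compatibility square for $g$ (with the roles of $C$ and $D$ interchanged). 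Since this holds for every $i \geq 1$, the family $g$ is a morphism $D \rightarrow C$ in $\varprojlim_{i \in \bZ_+} \mathcal{C}_i$.

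I do not expect any real obstacle here: everything reduces to the observation that an arrow with a component-wise inverse is itself invertible, provided that inverse still respects the coherence data. The only point requiring the slightest care is precisely that the inverse family satisfies the commuting squares, and this is guaranteed because the $\phi$'s, $\psi$'s, and $\mathcal{F}_{i-1, i}(f_i)$'s are all isomorphisms; note that no pre-additive, additive, or abelian structure on the $\mathcal{C}_i$ is used. I would conclude by observing $g \circ f = \id_C$ and $f \circ g = \id_D$, so that $f$ is an isomorphism with inverse $g$.
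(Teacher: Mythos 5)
Your proof is correct and follows essentially the same route as the paper: both take $g_i := f_i^{-1}$ and verify the compatibility square for $g$ from that of $f$ (the paper phrases the last step via uniqueness of inverses rather than your direct composition chase, which is only a cosmetic difference). The only blemish is a typo: $\mathcal{F}_{i-1,i}(f_i)\circ\mathcal{F}_{i-1,i}(g_i)=\mathcal{F}_{i-1,i}(\id_{D_i})$, not $\mathcal{F}_{i-1,i}(\id_{C_i})$.
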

\begin{proof}
 Let $g_i := f_i^{-1}$ for each $i \in \bZ_+$ (this morphism exists since $f_i$ is an isomorphism, and is unique). All we need is to show that $g := \{ g_i: D_i \rightarrow C_i\}_{i}$ is a morphism from $D$ to $C$ in $\varprojlim_{i \in \bZ_+} \mathcal{C}_i$, i.e. that the following diagram is commutative for any $i \geq 1$:
 $$ \begin{CD}
    \mathcal{F}_{i-1, i}(C_i) @>{\phi_{i-1, i}}>> C_{i-1}\\
    @A{\mathcal{F}_{i-1, i}(g_i)}AA @A{g_{i-1}}AA \\
    \mathcal{F}_{i-1, i}(D_i) @>{\psi_{i-1, i}}>> D_{i-1}
   \end{CD}$$
   The morphism $g_{i-1} \circ \psi_{i-1, i}$ is inverse to $\psi_{i-1, i}^{-1} \circ f_{i-1}$, and $\phi_{i-1, i} \circ \mathcal{F}_{i-1, i}(g_ji) $ is inverse to $\mathcal{F}_{i-1, i}(f_i) \circ \phi_{i-1, i}^{-1}$.
   
   But $\psi_{i-1, i}^{-1} \circ f_{i-1} = \mathcal{F}_{i-1, i}(f_i) \circ \phi_{i-1, i}^{-1}$, since $f= \{f_i: C_i \rightarrow D_i\}_{i \in \bZ_+}$ is a morphism from $C$ to $D$ in $\varprojlim_{i \in \bZ_+} \mathcal{C}_i$.
   The uniqueness of the inverse morphism then implies that $g_{i-1} \circ \psi_{i-1, i} = \phi_{i-1, i} \circ \mathcal{F}_{i-1, i}(g_i) $, and we are done.
\end{proof}

\begin{proposition}
 Assume the categories $\mathcal{C}_i$ are abelian, and the functors $\mathcal{F}_{i-1, i}$ are exact. Then the category $\varprojlim_{i \in \bZ_+} \mathcal{C}_i$ is automatically abelian:
\begin{itemize}
 \item Given $f : C \rightarrow D$ in $\varprojlim_{i \in \bZ_+} \mathcal{C}_i$, where $C= (\{C_i\}_{i \in \bZ_+}, \{\phi_{i-1, i}\}_{i \geq 1})$, $D=(\{D_i\}_{i \in \bZ_+}, \{\psi_{i-1, i}\}_{i  \geq 1})$, $f= \{f_i: C_i \rightarrow D_i\}_{i \in \bZ_+}$, $f$ has a kernel and a cokernel:
$$Ker(f) := (\{Ker(f_i)\}_{i \in \bZ_+}, \{ \rho_{i-1, i} \}_{i  \geq 1}), Coker(f) := (\{Coker(f_i)\}_{i \in \bZ_+}, \{ \mu_{i-1, i} \}_{i  \geq 1})$$
where $\rho_{i-1, i}, \mu_{i-1, i}$ are the unique maps making the following diagram commutative:
$$ \begin{CD}
Ker(\mathcal{F}_{i-1, i}(f_i))\cong \mathcal{F}_{i-1, i}(Ker(f_i)) @>{\rho_{i-1, i}}>> Ker(f_{i-1})\\
    @VVV @VVV \\
    \mathcal{F}_{i-1, i}(C_i) @>{\phi_{i-1, i}}>> C_{i-1}\\
    @V{\mathcal{F}_{ij}(f_i)}VV @V{f_{i-1}}VV \\
    \mathcal{F}_{i-1, i}(D_i) @>{\psi_{i-1, i}}>> D_{i-1} \\
    @VVV @VVV \\
    Coker(\mathcal{F}_{i-1, i}(f_i)) \cong \mathcal{F}_{i-1, i}(Coker(f_i)) @>{\mu_{i-1, i}}>> Coker(f_{i-1})
   \end{CD}
$$
 \item Given $f : C \rightarrow D$ in $\varprojlim_{i \in \bZ_+} \mathcal{C}_i$, we have: $Im(f) := Ker(Coker(f)) \cong Coker (Ker (f)) =: Coim(f)$.
\end{itemize}

\end{proposition}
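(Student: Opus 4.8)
The plan is to build on the additivity already established in the previous lemma and to verify the three remaining requirements for an abelian category: that every morphism admits a kernel and a cokernel with the componentwise description given in the statement, and that the canonical map from the coimage to the image is an isomorphism. The one structural input I would exploit throughout is that each $\mathcal{F}_{i-1, i}$ is exact, hence commutes with the formation of kernels and cokernels; the morphism axiom of $\varprojlim_{i \in \bZ_+} \mathcal{C}_i$ then supplies exactly the compatibility needed to glue the componentwise constructions into objects and morphisms of the limit.

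First I would show that the proposed kernel is a well-defined object of the limit. Fix $f = \{f_i\}$. Exactness of $\mathcal{F}_{i-1, i}$ yields the canonical isomorphism $\mathcal{F}_{i-1, i}(\operatorname{Ker}(f_i)) \cong \operatorname{Ker}(\mathcal{F}_{i-1, i}(f_i))$. The morphism condition $f_{i-1} \circ \phi_{i-1, i} = \psi_{i-1, i} \circ \mathcal{F}_{i-1, i}(f_i)$ says that the isomorphism $\phi_{i-1, i}$ intertwines $\mathcal{F}_{i-1, i}(f_i)$ with $f_{i-1}$, so it induces an isomorphism $\operatorname{Ker}(\mathcal{F}_{i-1, i}(f_i)) \cong \operatorname{Ker}(f_{i-1})$. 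Composing the two gives the map $\rho_{i-1, i}$ of the statement, which is therefore an isomorphism; this is precisely the commutativity of the top square of the displayed diagram. Hence $(\{\operatorname{Ker}(f_i)\}, \{\rho_{i-1, i}\})$ is an object of $\varprojlim_{i \in \bZ_+} \mathcal{C}_i$, and the componentwise inclusions $\operatorname{Ker}(f_i) \hookrightarrow C_i$ assemble into a morphism $\iota$ into $C$, the required square commuting by the very definition of $\rho_{i-1, i}$.

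Next I would verify the universal property componentwise. Given $g : E \to C$ with $f \circ g = 0$, each component satisfies $f_i \circ g_i = 0$, so $g_i$ factors uniquely as $g_i = \iota_i \circ h_i$ through $\operatorname{Ker}(f_i)$. The collection $h = \{h_i\}$ is then a morphism in the limit: the compatibility square for $h$ follows from those for $g$ and for $\iota$, together with the uniqueness of factorization through the monomorphism $\iota_{i-1}$ in each $\mathcal{C}_{i-1}$ (both $h_{i-1} \circ \rho_{i-1,i}$ and the competing composite factor the same morphism through $\iota_{i-1}$, hence coincide). Uniqueness of $h$ is immediate since its components are forced. This proves the kernel claim; the cokernel claim is entirely dual, using that $\mathcal{F}_{i-1, i}$ also preserves cokernels and that $\phi_{i-1, i}, \psi_{i-1, i}$ induce the isomorphisms $\mu_{i-1, i}$.

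Finally, for the equality of image and coimage I would argue that all four objects $\operatorname{Ker}$, $\operatorname{Coker}$, $\operatorname{Im} = \operatorname{Ker}(\operatorname{Coker})$ and $\operatorname{Coim} = \operatorname{Coker}(\operatorname{Ker})$ are computed componentwise by the first part, so the canonical comparison morphism $\operatorname{Coim}(f) \to \operatorname{Im}(f)$ in the limit has as its $i$-th component the canonical map $\operatorname{Coim}(f_i) \to \operatorname{Im}(f_i)$ in $\mathcal{C}_i$. Since each $\mathcal{C}_i$ is abelian, every such component is an isomorphism, and Lemma \ref{lem:inv_limit_cat_isom} upgrades this to an isomorphism in $\varprojlim_{i \in \bZ_+} \mathcal{C}_i$. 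I expect the only real bookkeeping, and hence the main obstacle, to be the gluing step in the universal property: checking that the uniquely determined componentwise factorizations are genuinely compatible with the structure isomorphisms $\phi_{i-1, i}, \psi_{i-1, i}$. This is where the uniqueness of factorizations through (co)kernels in the abelian categories $\mathcal{C}_i$, rather than any new idea, does all the work.
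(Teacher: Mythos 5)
Your proposal is correct and follows essentially the same route as the paper: kernels and cokernels are constructed componentwise (with the structure isomorphisms $\rho_{i-1,i}$, $\mu_{i-1,i}$ induced by exactness of $\mathcal{F}_{i-1,i}$ and the morphism compatibility squares), their universal properties are inherited from the componentwise ones, and the comparison map $\operatorname{Coim}(f)\to\operatorname{Im}(f)$ is shown to be an isomorphism by observing that its components are the canonical isomorphisms in the abelian categories $\mathcal{C}_i$ and invoking Lemma \ref{lem:inv_limit_cat_isom}. You simply spell out the gluing and uniqueness details that the paper leaves as ``automatic.''
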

\begin{proof}
%

The universal properties of $Ker(f), Coker(f)$ hold automatically, as a consequence of the universal properties of $Ker(f_i), Coker(f_i)$.

Now, let $f : C \rightarrow D$ in $\varprojlim_{i \in \bZ_+} \mathcal{C}_i$, where $C= (\{C_i\}_{i \in \bZ_+}, \{\phi_{i-1, i} \}_{i \geq 1} )$, $D=(\{D_i\}_{i \in \bZ_+}, \{\psi_{i-1, i} \}_{i \geq 1})$, $f= \{f_i: C_i \rightarrow D_i\}_{i \in \bZ_+}$. 

Consider the objects $Im(f) := Ker(Coker(f)), Coim(f):= Coker (Ker (f)) $ in $\varprojlim_{i \in \bZ_+} \mathcal{C}_i$.
We have a canonical map $\bar{f}: Coim(f) \rightarrow Im(f)$, such that $f : C \rightarrow D$ is the composition
$$C \twoheadrightarrow Coim(f) \stackrel{\bar{f}}{\longrightarrow} Im(f) \hookrightarrow D $$

Consider the maps $\bar{f}_i$ for each $i \in \bZ_+$, where $\bar{f}_i$ is the canonical map such that $f_i : C_i \rightarrow D_i$ is the composition
$$C_i \twoheadrightarrow Coim(f_i) \stackrel{\bar{f_i}}{\longrightarrow} Im(f_i) \hookrightarrow D_i $$
One then immediately sees that $\bar{f}= \{\bar{f}_i: Coim(f_i) \rightarrow Im(f_i)\}_i$. 

Since the category $\mathcal{C}_i$ is abelian for each $i \in \bZ_+$, the map $\bar{f}_i$ is an isomorphism. Lemma \ref{lem:inv_limit_cat_isom} then implies that $\bar{f}$ is an isomorphism as well.
\end{proof}

The following is a trivial corollary of the previous proposition:
\begin{corollary}\label{cor:inv_limit_cat_exact}
The functors $\mathbf{Pr}_i$ are exact. 
%
%
\end{corollary}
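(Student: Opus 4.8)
The plan is to deduce exactness directly from the explicit description of kernels and cokernels furnished by the preceding proposition, together with the additivity of $\mathbf{Pr}_i$ established earlier. Recall that an additive functor between abelian categories is exact precisely when it preserves kernels and cokernels (equivalently, when it carries short exact sequences to short exact sequences); since the earlier lemma shows each $\mathbf{Pr}_i$ is additive, it suffices to check that $\mathbf{Pr}_i$ commutes with the formation of kernels and cokernels.

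This is immediate from the construction in the proposition. There, for a morphism $f = \{f_j\}_{j} : C \rightarrow D$ in $\varprojlim_{j \in \bZ_+} \mathcal{C}_j$, the objects $Ker(f)$ and $Coker(f)$ were built so that their $i$-th components are literally $Ker(f_i)$ and $Coker(f_i)$. Since $\mathbf{Pr}_i(f) = f_i$ by the definition of the projection functor, applying $\mathbf{Pr}_i$ yields
$$\mathbf{Pr}_i(Ker(f)) = Ker(f_i) = Ker(\mathbf{Pr}_i(f)), \qquad \mathbf{Pr}_i(Coker(f)) = Coker(f_i) = Coker(\mathbf{Pr}_i(f)),$$
and these identifications are compatible with the canonical maps, since the universal arrows $Ker(f) \rightarrow C$ and $D \rightarrow Coker(f)$ have $i$-th components the canonical arrows for $f_i$. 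Hence $\mathbf{Pr}_i$ preserves kernels and cokernels.

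Combining the two observations, $\mathbf{Pr}_i$ is an additive functor of abelian categories preserving kernels and cokernels, and is therefore exact. Concretely, given a short exact sequence in $\varprojlim_{j \in \bZ_+}\mathcal{C}_j$, its image under $\mathbf{Pr}_i$ is its sequence of $i$-th components, which is exact in $\mathcal{C}_i$ because monomorphisms, epimorphisms and images in the inverse limit are detected componentwise — exactly the content of the proposition, where $Im(f) = Ker(Coker(f))$ acquires $i$-th component $Im(f_i)$. There is essentially no obstacle here; the only point requiring a moment's care is confirming that the isomorphisms $\mathcal{F}_{i-1,i}(Ker(f_i)) \cong Ker(\mathcal{F}_{i-1,i}(f_i))$ used to assemble $Ker(f)$ do not disturb the $i$-th component itself, which is clear since those isomorphisms enter only through the connecting data $\{\rho_{i-1,i}\}$ and not through the components $Ker(f_i)$.
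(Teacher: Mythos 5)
Your argument is correct and is exactly what the paper intends: it declares the corollary a trivial consequence of the preceding proposition, whose construction makes the $i$-th components of $Ker(f)$ and $Coker(f)$ equal to $Ker(f_i)$ and $Coker(f_i)$, so $\mathbf{Pr}_i$ preserves kernels and cokernels and is therefore exact. You have simply written out the details the paper leaves implicit.
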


This corollary, in turn, immediately implies the following statement:

\begin{corollary}\label{cor:prop_functor_to_inv_lim}
Let $(\mathcal{C}_{i}, \mathcal{F}_{ij})$ be a system of pre-additive (respectively, additive, abelian) categories, and linear (respectively, additive, exact) functors.

Let $\mathcal{A}$ be a pre-additive (respectively, additive, abelian) category, together with a set of linear (respectively, additive, exact) functors $\mathcal{G}_i: \mathcal{A} \rightarrow \mathcal{C}_i$ which satisfy: for any $i \geq 1$, there exists a natural isomorphism
$$ \eta_{i-1, i}:\mathcal{F}_{i-1, i} \circ \mathcal{G}_i \rightarrow \mathcal{G}_{i-1} $$

Then $\varprojlim_{i \in \bZ_+} \mathcal{C}_i$ is universal among such categories; that is, we have a linear (respectively, additive, exact) functor 
\begin{align*}
\mathcal{G}: \mathcal{A} &\rightarrow \varprojlim_{i \in \bZ_+} \mathcal{C}_i \\
A &\mapsto (\{\mathcal{G}_i(A)\}_{i \in \bZ_+}, \{\eta_{i-1, i}\}_{i \in \bZ_+}) \\
f: A_1 \rightarrow A_2 &\mapsto \{f_i:= \mathcal{G}_i(f)\}_{i \in \bZ_+} 
\end{align*}
and $\mathcal{G}_i \cong \mathbf{Pr}_i \circ \mathcal{G}$ for every $i \in \bZ_+$.
\end{corollary}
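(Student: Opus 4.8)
The plan is to observe that $\mathcal{G}$ as a functor between bare categories, together with the natural isomorphisms $\mathcal{G}_i \cong \mathbf{Pr}_i \circ \mathcal{G}$, is already furnished by the universal property of $\varprojlim_{i \in \bZ_+} \mathcal{C}_i$ recorded in Subsection~\ref{subsec:inv_limit_def}; explicitly it sends $A$ to $(\{\mathcal{G}_i(A)\}_{i \in \bZ_+}, \{\eta_{i-1,i}\}_{i \geq 1})$ and $f$ to $\{\mathcal{G}_i(f)\}_{i \in \bZ_+}$. The only remaining task is to check that $\mathcal{G}$ inherits whichever structural property ($\bC$-linearity, additivity, exactness) is in force. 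The guiding principle throughout is that each of these properties is detected component-wise by the projections $\mathbf{Pr}_i$: the morphism spaces, biproducts, kernels and cokernels of $\varprojlim_{i \in \bZ_+} \mathcal{C}_i$ are all computed levelwise, and the $\mathbf{Pr}_i$ are themselves linear, additive and exact by the preceding lemmas and Corollary~\ref{cor:inv_limit_cat_exact}.

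For the pre-additive case I would argue directly. Addition and scalar multiplication of morphisms in $\varprojlim_{i \in \bZ_+} \mathcal{C}_i$ are performed in each coordinate, and $\mathcal{G}(\alpha f + \beta g) = \{\mathcal{G}_i(\alpha f + \beta g)\}_i$. Since every $\mathcal{G}_i$ is $\bC$-linear this equals $\{\alpha \mathcal{G}_i(f) + \beta \mathcal{G}_i(g)\}_i = \alpha\, \mathcal{G}(f) + \beta\, \mathcal{G}(g)$, so $\mathcal{G}$ is $\bC$-linear. For the additive case, recall that the zero object and the biproduct of $\varprojlim_{i \in \bZ_+} \mathcal{C}_i$ are given coordinatewise. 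Thus $\mathcal{G}(A \oplus B)$ has $i$-th component $\mathcal{G}_i(A \oplus B) \cong \mathcal{G}_i(A) \oplus \mathcal{G}_i(B)$, the latter being the $i$-th component of $\mathcal{G}(A) \oplus \mathcal{G}(B)$; the structure isomorphisms match because the $\eta_{i-1,i}$ are natural and therefore respect these biproduct decompositions. Hence $\mathcal{G}$ preserves zero objects and biproducts and is additive.

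For the exact case, the cleanest route is to verify that $\mathcal{G}$ preserves kernels and cokernels, using Lemma~\ref{lem:inv_limit_cat_isom} to pass from the components to the inverse limit. Fix a morphism $f : A \to B$ in $\mathcal{A}$. By the coordinatewise construction of kernels in $\varprojlim_{i \in \bZ_+} \mathcal{C}_i$ (the Proposition above), the $i$-th component of $Ker(\mathcal{G}(f))$ is $Ker(\mathbf{Pr}_i(\mathcal{G}(f))) \cong Ker(\mathcal{G}_i(f))$, while the $i$-th component of $\mathcal{G}(Ker(f))$ is $\mathcal{G}_i(Ker(f))$. Since each $\mathcal{G}_i$ is exact, the canonical comparison morphism $\mathcal{G}(Ker(f)) \to Ker(\mathcal{G}(f))$ is an isomorphism in every component, hence an isomorphism by Lemma~\ref{lem:inv_limit_cat_isom}; the same argument applied to cokernels shows $Coker(\mathcal{G}(f)) \cong \mathcal{G}(Coker(f))$. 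Therefore $\mathcal{G}$ is exact.

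I do not expect any genuine obstacle here; the statement is a formal consequence of the component-wise descriptions already established. The one point deserving a line of care is the compatibility of the structure isomorphisms $\eta_{i-1,i}$ with biproducts and with kernels and cokernels, but this is immediate from the naturality of $\eta$ together with the explicit levelwise formulas of the preceding lemmas and proposition, and Lemma~\ref{lem:inv_limit_cat_isom} does exactly the work of upgrading ``isomorphism in each component'' to ``isomorphism in the limit.''
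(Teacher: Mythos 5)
Your proposal is correct and follows essentially the same route as the paper, which states this corollary without proof as an immediate consequence of the levelwise construction of the ($\bC$-linear, additive, abelian) structure on $\varprojlim_{i \in \bZ_+} \mathcal{C}_i$ and the corresponding properties of the projections $\mathbf{Pr}_i$. Your write-up simply supplies the details the paper leaves implicit, in particular the use of Lemma~\ref{lem:inv_limit_cat_isom} to upgrade the componentwise comparison isomorphisms for kernels and cokernels to isomorphisms in the limit.
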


\section{Restricted inverse limit of finite-length categories}\label{sec:stab_inv_lim}
\subsection{}
We consider the case when the categories $\mathcal{C}_i$ are finite-length. We would like to give a notion of an inverse limit of the system $((\mathcal{C}_i)_{i\in \bZ_+}, (\mathcal{F}_{i-1, i})_{i \geq 1})$ which would be a finite-length category as well. In order to do this, we will define the notion of a ``shortening'' functor, and define a ``stable'' inverse limit of a system of finite-length categories and shortening functors.

\begin{definition}\label{def:shorten_functor}
 Let $\mathcal{A}_1, \mathcal{A}_2$ be finite-length categories. An exact functor $\mathcal{F}:\mathcal{A}_1 \longrightarrow \mathcal{A}_2$ will be called {\it shortening} if for any object $A \in \mathcal{A}_1$, we have:
 $$\ell_{\mathcal{A}_1}(A) \geq \ell_{\mathcal{A}_2}(\mathcal{F}(A))$$
\end{definition}

Since $\mathcal{F}$ is exact, this is equivalent to requiring that for any simple object $L \in \mathcal{A}_1$, the object $\mathcal{F}(L)$ is either simple or zero.

\begin{definition}\label{def:stab_inv_lim}
 Let $((\mathcal{C}_i)_{i\in \bZ_+}, (\mathcal{F}_{i-1, i})_{i \geq 1})$ be a system of finite-length categories and shortening functors. We will denote by $\varprojlim_{i \in \bZ_+, \text{ restr}} \mathcal{C}_i$ the full subcategory of $\varprojlim_{i \in \bZ_+} \mathcal{C}_i$ whose objects $C: =(\{C_j\}_{j \in \bZ_+}, \{\phi_{j-1, j}\}_{j \geq 1})$ satisfy:
 the integer sequence $\{ \ell_{\mathcal{C}_i}(C_i) \}_{i \geq 0}$ stabilizes.
\end{definition}

Note that the since the functors $\mathcal{F}_{i-1, i}$ are shortening, the sequence $\{ \ell_{\mathcal{C}_i}(C_i) \}_{i \geq 0}$ is weakly increasing. Therefor, this sequence stabilizes iff it is bounded from above.

We now show that $\varprojlim_{i \in \bZ_+, \text{ restr}} \mathcal{C}_i$ is a finite-length category.  
\begin{lemma}\label{lem:stab_lim_is_artinian}
 The category $\mathcal{C}:= \varprojlim_{i \in \bZ_+, \text{ restr}} \mathcal{C}_i$ is a Serre subcategory of $\varprojlim_{i \in \bZ_+} \mathcal{C}_i$, and its objects have finite length.
 
 Moreover, given an object $C : =(\{C_i\}_{i \in \bZ_+}, \{\phi_{i-1, i}\}_{i \geq 1})$ in $\mathcal{C}$, we have: 
 $$ \ell_{\mathcal{C}} (C) \leq \max \{ \ell_{\mathcal{C}_i} (C_i) \rvert i \geq 0 \}$$
\end{lemma}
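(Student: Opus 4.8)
The plan is to establish the three assertions in turn, using throughout that $\varprojlim_{i \in \bZ_+} \mathcal{C}_i$ is abelian and that each projection $\mathbf{Pr}_i$ is exact (the Proposition and Corollary \ref{cor:inv_limit_cat_exact}), together with the fact that membership in $\mathcal{C}$ is exactly the boundedness (equivalently, stabilization) of the length sequence.

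\emph{Serre subcategory.} First I would check closure under subobjects, quotients and extensions simultaneously. Given a short exact sequence $0 \to A \to B \to D \to 0$ in $\varprojlim_{i \in \bZ_+} \mathcal{C}_i$, applying the exact functor $\mathbf{Pr}_i$ gives a short exact sequence $0 \to A_i \to B_i \to D_i \to 0$ in $\mathcal{C}_i$ for every $i$, so by additivity of length on short exact sequences $\ell_{\mathcal{C}_i}(B_i) = \ell_{\mathcal{C}_i}(A_i) + \ell_{\mathcal{C}_i}(D_i)$. Since all three terms are nonnegative, $\{\ell_{\mathcal{C}_i}(B_i)\}_i$ is bounded if and only if both $\{\ell_{\mathcal{C}_i}(A_i)\}_i$ and $\{\ell_{\mathcal{C}_i}(D_i)\}_i$ are bounded. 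Hence $B \in \mathcal{C} \iff A, D \in \mathcal{C}$, which is precisely the Serre condition.

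\emph{Upward propagation of nonvanishing.} For the length bound I would first record the following: if $Q = (\{Q_i\}_i, \{\cdots\})$ is an object of $\varprojlim_{i \in \bZ_+} \mathcal{C}_i$ and $Q_{i-1} \neq 0$, then $Q_i \neq 0$. Indeed, the structure isomorphism gives $Q_{i-1} \cong \mathcal{F}_{i-1, i}(Q_i)$, and $\mathcal{F}_{i-1, i}(0) = 0$, so contrapositively $Q_i = 0$ forces $Q_{i-1} = 0$. Consequently, if $Q \neq 0$ then the set of indices with $Q_i \neq 0$ is nonempty and upward closed, so $Q_i \neq 0$ for all $i$ beyond some index $i_0(Q)$.

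\emph{Finite length and the bound.} Fix $C = (\{C_i\}_i, \{\phi_{i-1,i}\}_i) \in \mathcal{C}$ and set $M := \max\{\ell_{\mathcal{C}_i}(C_i) \mid i \geq 0\}$; this is finite because the sequence is weakly increasing and bounded, and there is $N$ with $\ell_{\mathcal{C}_i}(C_i) = M$ for all $i \geq N$. Consider any strictly increasing chain of subobjects
$$0 = D^{(0)} \subsetneq D^{(1)} \subsetneq \cdots \subsetneq D^{(k)} = C$$
in $\varprojlim_{i \in \bZ_+} \mathcal{C}_i$; since $\mathcal{C}$ is Serre and $C \in \mathcal{C}$, each $D^{(j)}$ lies in $\mathcal{C}$, so chains in $\mathcal{C}$ coincide with chains in the ambient category. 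Each quotient $Q^{(j)} := D^{(j+1)}/D^{(j)}$ is nonzero, so by the auxiliary observation there is an index past which $Q^{(j)}_i \neq 0$. Choosing $N'$ larger than $N$ and than all these finitely many indices, at level $N'$ every $Q^{(j)}_{N'} = D^{(j+1)}_{N'}/D^{(j)}_{N'}$ is nonzero, i.e. all inclusions $D^{(j)}_{N'} \subsetneq D^{(j+1)}_{N'}$ are strict. Thus $0 = D^{(0)}_{N'} \subsetneq \cdots \subsetneq D^{(k)}_{N'} = C_{N'}$ is a strict chain in the honest finite-length category $\mathcal{C}_{N'}$, whence $k \leq \ell_{\mathcal{C}_{N'}}(C_{N'}) = M$. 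As every strict chain of subobjects of $C$ has length at most $M$, the object $C$ has finite length and $\ell_{\mathcal{C}}(C) \leq M$.

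\emph{Main obstacle.} The only genuinely delicate point is passing from strictness at the inverse-limit level to strictness at a single finite level: a priori, the index witnessing $D^{(j)} \subsetneq D^{(j+1)}$ could depend on $j$. The upward-propagation observation is exactly what allows me to select one large $N'$ that simultaneously witnesses strictness of all $k$ steps, after which additivity of length in $\mathcal{C}_{N'}$ finishes the argument.
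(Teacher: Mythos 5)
Your proof is correct and follows essentially the same route as the paper: closure under short exact sequences via exactness of the $\mathbf{Pr}_i$ and additivity of length, and the length bound via the observation that a nonzero object of the inverse limit has nonzero components for all sufficiently large $i$, so that finitely many nonzero subquotients admit a common level $N'$ at which they remain nonzero. Your phrasing of the second step in terms of strict chains of subobjects is in fact slightly cleaner than the paper's, which speaks of subsets of $JH_{\mathcal{C}}(C)$ before finite length has been established.
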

\begin{proof}
 Let $$C : =(\{C_j\}_{j \in \bZ_+}, \{\phi_{j-1, j}\}_{j \geq 1}), \; C' : =(\{C'_j\}_{j \in \bZ_+}, \{\phi'_{j-1, j}\}_{j \geq 1}), \; C'' : =(\{C''_j\}_{j \in \bZ_+}, \{\phi''_{j-1, j}\}_{j \geq 1})$$ be objects in $\varprojlim_{i \in \bZ_+} \mathcal{C}_i$, together with morphisms $f: C' \rightarrow C$, $g: C \rightarrow C''$ such that the sequence $$ 0 \rightarrow C' \stackrel{f}{\longrightarrow} C \stackrel{g}{\longrightarrow} C'' \rightarrow 0$$ is exact.
 
 If $C$ lies in the subcategory $\mathcal{C}$, then the sequence $\{ \ell_{\mathcal{C}_i}(C_i) \}_{i \geq 0}$ is bounded from above, and stabilizes. Denote its maximum by $N$. For each $i$, the sequence
 $$ 0 \rightarrow C'_i \stackrel{f_i}{\longrightarrow} C_i \stackrel{g}{\longrightarrow} C''_i \rightarrow 0$$ is exact. Therefore, $ \ell_{\mathcal{C}_i}(C'_i),  \ell_{\mathcal{C}_i}(C''_i) \leq N$ for each $i$, and thus $C', C''$ lie in $\mathcal{C}$ as well.
 
Vice versa, assuming $C', C''$ lie in $\mathcal{C}$, denote by $N', N''$ the maximums of the sequences $\{ \ell_{\mathcal{C}_i}(C'_i) \}_i,  \{ \ell_{\mathcal{C}_i}(C''_i) \}_i$ respectively. Then $\ell_{\mathcal{C}_i}(C_i) \leq N' + N''$ for any $i \geq 0$, and so $C$ lies in the subcategory $\mathcal{C}$ as well.

Thus $\mathcal{C}$ is a Serre subcategory of $\varprojlim_{i \in \bZ_+} \mathcal{C}_i$.

Next, let $C$ lie in $\mathcal{C}$. We would like to say that $C$ has finite length. Denote by $N$ the maximum of the sequence $\{ \ell_{\mathcal{C}_i}(C_i) \}_{i \geq 0}$. It is easy to see that $C$ has length at most $N$; indeed, if $\{C', C'', ..., C^{(n)} \}$ is a subset of $JH_{\mathcal{C}}(C)$, then for some $i >>0$, we have: $\mathbf{Pr}_i(C^{(k)}) \neq 0$ for any $k=1, 2, ..., n$. $\mathbf{Pr}_i(C^{(k)})$ are distinct Jordan Holder components of $C_i$, so $ n \leq \ell_{\mathcal{C}_i}(C_i) \leq N$. In particular, we see that
$$ \ell_{\mathcal{C}} (C) \leq N = \max \{ \ell_{\mathcal{C}_i} (C_i) \rvert i \geq 0 \}$$

\end{proof}

\begin{notation}
 Denote by $Irr(\mathcal{C}_i)$ the set of isomorphism classes of irreducible objects in $\mathcal{C}_i$, and define the pointed set $$Irr_*(\mathcal{C}_i) := Irr(\mathcal{C}_i) \sqcup \{0\}$$

The shortening functors $\mathcal{F}_{i-1, i}$ then define maps of pointed sets $$f_{i-1, i}: Irr_*(\mathcal{C}_i) \longrightarrow Irr_*(\mathcal{C}_{i-1})$$

Similarly, we define $Irr \left( \varprojlim_{i \in \bZ_+, \text{ restr}} \mathcal{C} \right)$ to be the set of isomorphism classes of irreducible objects in $\mathcal{C}$, and define the pointed set $$Irr_*(\mathcal{C}) := Irr(\mathcal{C}) \sqcup \{0\}$$

\end{notation}

Let $C : =(\{C_j\}_{j \in \bZ_+}, \{\phi_{j-1, j}\}_{j \geq 1})$ be an object in $\mathcal{C}$. We denote by $JH(C_j)$ the multiset of the Jordan-Holder components of $C_j$, and let $$JH_*(C_j) := JH(C_j) \sqcup \{0\}$$ The corresponding set lies in $ Irr_*(\mathcal{C}_j)$, and we have maps of (pointed) multisets $$f_{j-1, j}: JH_*(C_j) \rightarrow JH_*(C_{j-1})$$

\mbox{}

Denote by $\varprojlim_{i \in \bZ_+} Irr_*(\mathcal{C}_i)$ the inverse limit of the system $( \{Irr_*(\mathcal{C}_i \}_{i \geq 0}, \{f_{i-1, i} \}_{i \geq 1})$. We will also denote by $pr_j: \varprojlim_{i \in \bZ_+} Irr_*(\mathcal{C}_i) \rightarrow Irr_*(\mathcal{C}_j)$ the projection maps. 

The elements of the set $\varprojlim_{i \in \bZ_+} Irr_*(\mathcal{C}_i)$ are just sequences $(L_i)_{i  \geq 0}$ such that $L_i \in Irr_*(\mathcal{C}_i)$, and $f_{i-1, i}(L_i) \cong L_{i-1}$. 
%
%

The following lemma describes the simple objects in the category $\mathcal{C} := \varprojlim_{i \in \bZ_+, \text{ restr}} \mathcal{C}_i$. 


\begin{lemma}\label{lem:simple_obj_stab_lim}
 Let $C : =(\{C_j\}_{j \in \bZ_+}, \{\phi_{j-1, j}\}_{j \geq 1})$ be an object in $\mathcal{C}:=\varprojlim_{i \in \bZ_+, \text{ restr}} \mathcal{C}_i$.
 
 Then $$C \in Irr_*(\mathcal{C}) \; \; \Longleftrightarrow \; \;  \mathbf{Pr}_j(C)= C_j \in Irr_*(\mathcal{C}_j)  \; \forall j $$
 
 In other words, $C$ is a simple object (that is, $C$ has exactly two distinct subobjects: zero and itself) iff $C \neq 0$, and for any $j \geq 0$, the component $C_j$ is either a simple object in $\mathcal{C}_j$, or zero.
\end{lemma}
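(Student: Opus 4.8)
The plan is to prove the two implications separately, treating the case $C=0$ (equivalently, all $C_j=0$, since each $\mathbf{Pr}_j$ is additive) as trivial and assuming $C\neq 0$ throughout.

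For the implication $(\Leftarrow)$ I would argue by length. If every component $C_j$ is simple or zero, then $\ell_{\mathcal{C}_j}(C_j)\leq 1$ for all $j$, so Lemma \ref{lem:stab_lim_is_artinian} gives $\ell_{\mathcal{C}}(C)\leq \max_j \ell_{\mathcal{C}_j}(C_j)\leq 1$; hence $C$ is simple, i.e. $C\in Irr_*(\mathcal{C})$. This direction is immediate from the material already in place.

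The substance is the implication $(\Rightarrow)$, which I would prove in contrapositive form: if some component has length at least $2$, then $C$ is not simple. Since the functors are shortening, the sequence $\{\ell_{\mathcal{C}_i}(C_i)\}_i$ is weakly increasing and, by the definition of the restricted limit, stabilizes; so there are $N$ and $\ell\geq 2$ with $\ell_{\mathcal{C}_i}(C_i)=\ell$ for all $i\geq N$. The key structural consequence of stabilization is that for $i>N$ the isomorphism $\phi_{i-1,i}\colon \mathcal{F}_{i-1,i}(C_i)\xrightarrow{\sim} C_{i-1}$ forces $\mathcal{F}_{i-1,i}$ to send \emph{every} Jordan--Holder component of $C_i$ to a nonzero simple (no factor is killed, since the lengths agree). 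Using exactness, one checks that the induced map on subobject lattices $\pi_{i-1,i}\colon L(C_i)\to L(C_{i-1})$, $B\mapsto \phi_{i-1,i}(\mathcal{F}_{i-1,i}(B))$, is injective and length-preserving for $i>N$: it commutes with $\cap$ and $+$, and if $\mathcal{F}_{i-1,i}(B)=\mathcal{F}_{i-1,i}(B')$ then $\ell(B+B')=\ell(B\cap B')$, which together with $B\cap B'\subseteq B+B'$ forces $B=B'$. Finally, a subobject of $C$ in $\varprojlim_i \mathcal{C}_i$ is exactly a compatible tower $(B_i)_i$ with $\pi_{i-1,i}(B_i)=B_{i-1}$, so it suffices to produce one such tower with $0\neq B_i\neq C_i$.

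Constructing this coherent tower is the main obstacle, and it is where stabilization does the real work. The difficulty is that a proper nonzero subobject exists at every individual level, but these need not a priori assemble into a compatible family: lifting a chosen $B_{i-1}$ to level $i$ can fail, since $\pi_{i-1,i}$ need not be surjective. To get around this I would pass to a finite invariant: the set $T_i$ of isomorphism classes of simple subobjects of $C_i$ (equivalently, the simple constituents of its socle) is nonempty and has at most $\ell$ elements, and $\mathcal{F}_{i-1,i}$ maps $T_i$ into $T_{i-1}$. An inverse system of finite nonempty sets indexed by $\bZ_+$ has nonempty inverse limit (a König's-lemma/compactness argument on the tree of compatible finite sequences), producing a compatible sequence of socle-types. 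The remaining, and most delicate, step is to upgrade this to an honest compatible tower of simple subobjects $U_i\subseteq C_i$ with $\pi_{i-1,i}(U_i)=U_{i-1}$, using the injectivity of $\pi_{i-1,i}$ in the stable range to propagate a choice downward and the no-factor-killed property to propagate it upward. Such a tower is a simple subobject $U$ of $C$ with every $U_i$ proper (as $\ell\geq 2$), so $0\neq U\subsetneq C$ and $C$ is not simple, as required.
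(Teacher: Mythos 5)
Your $(\Leftarrow)$ direction is fine and matches the paper's (which simply declares it obvious). The problem is in $(\Rightarrow)$: the step you yourself flag as ``the most delicate'' -- upgrading a compatible sequence of \emph{isomorphism classes} of simple subobjects to an honest compatible tower $U_i\subseteq C_i$ with $\phi_{i-1,i}(\mathcal{F}_{i-1,i}(U_i))=U_{i-1}$ -- is a genuine gap, not a routine verification. Your K\"onig's-lemma argument applies to the finite sets $T_i$ of iso-classes, but the set of actual simple subobjects of $C_i$ in a fixed class is in general infinite (e.g.\ a projective space of lines when the socle has an isotypic multiplicity space of dimension $\geq 2$ over $\bC$), so compactness does not transfer. ``Propagating upward'' fails because $\pi_{i-1,i}$ need not hit every simple subobject of $C_{i-1}$ (the image of $socle(C_i)$ can be a proper subobject of $socle(C_{i-1})$), and ``propagating downward'' from level $i$ gives a finite partial tower that you cannot a priori extend to level $i+1$. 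Injectivity of $\pi_{i-1,i}$ in the stable range (which you do establish correctly) guarantees uniqueness of lifts, not existence, so it does not close this gap.

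The paper sidesteps the choice problem entirely by working with the whole socle rather than a single simple constituent: since $\phi_{j-1,j}$ embeds $\mathcal{F}_{j-1,j}(socle(C_j))$ into $socle(C_{j-1})$ and no Jordan--H\"older factor is killed past the point $n_0$ where the length stabilizes, the sequence $\ell_{\mathcal{C}_j}(socle(C_j))$ is weakly decreasing for $j>n_0$ and hence stabilizes at some $n_1$; past $n_1$ the socle maps are isomorphisms, and one defines a subobject $D$ of $C$ by $D_j:=socle(C_j)$ for $j\geq n_1$ and $D_j:=\mathcal{F}_{j,n_1}(socle(C_{n_1}))$ for $j<n_1$. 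Then $0\neq D\subseteq C$, so simplicity of $C$ forces $D=C$, and the semisimplicity of $D$ together with the parametrization of simples by $\varprojlim JH_*(D_j)$ forces each $C_j$ to have length at most one. If you replace your class-by-class lifting with this monotonicity-of-socle-lengths argument, your contrapositive strategy goes through; as written, the crucial existence statement is asserted but not proved.
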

\begin{proof}
 
 The direction $\Leftarrow$ is obvious, so we will only prove the direction $\Rightarrow$.
 
 Let $n_0$ be a position in which the maximum of the weakly-increasing integer sequence $ \{ \ell_{\mathcal{C}_i}(C_i) \}_{i \geq 0}$ is obtained. By definition of $n_0$, for $j > n_0$, the functors $\mathcal{F}_{j-1, j}$ do not kill any Jordan-Holder components of $C_j$. 
 
 Now, consider the socles of the objects $C_j$ for $j \geq n_0$. For any $j >0$, we have: $$\mathcal{F}_{j-1, j}(\text{ } socle(C_j) \text{ }) \stackrel{\phi_{j-1, j}}{\hookrightarrow} \text{ } socle(C_{j-1})$$ and thus for $j > n_0$, we have $$\ell_{\mathcal{C}_j}(\text{ } socle(C_j) \text{ }) = \ell_{\mathcal{C}_{j-1}}(\mathcal{F}_{j-1, j}(\text{ } socle(C_j) \text{ }))  \leq \ell_{\mathcal{C}_{j-1}}(\text{ } socle(C_{j-1}) \text{ })$$ 
 Thus the sequence $$\{\ell_{\mathcal{C}_j}(\text{ } socle(C_j) \text{ }) \}_{j \geq n_0}$$ is a weakly decreasing sequence, and stabilizes. Denote its stable value by $N$. We conclude that there exists $n_1 \geq n_0$ so that $$\mathcal{F}_{j-1, j}(\text{ } socle(C_j) \text{ }) \stackrel{\phi_{j-1, j}}{\longrightarrow} \text{ } socle(C_{j-1})$$ is an isomorphism for every $j > n_1$.
 
 Now, denote: $$D_j := \begin{cases}
                        \mathcal{F}_{j, n_1}(\text{ } socle(C_{n_1}) \text{ }) &\text{ if } j < n_1 \\
                        socle(C_j) &\text{ if } j \geq n_1 
                       \end{cases}
$$
and we put: $D:= ((D_j)_{j \geq 0}, (\phi_{j-1, j})_{j \geq 1})$ (this is a subobject of $C$ in the category $\varprojlim_{i \in \bZ_+} \mathcal{C}_i$). Of course, $\ell_{\mathcal{C}_j}(D_j) \leq N$ for any $j$, so $D$ is an object in the full subcategory $\mathcal{C}$ of $\varprojlim_{i \in \bZ_+} \mathcal{C}_i$. 

Furthermore, since $C \neq 0$, we have: for $j >>0$, $socle(C_j) \neq 0$, and thus $0 \neq D \subset C$.

$D$ is a semisimple object $\mathcal{C}$, with simple summands corresponding to the elements of the inverse limit of the multisets $\varprojlim_{ j \in \bZ_+} JH_*(D_j)$. 

We conclude that $D=C$, and that $socle(C_j) = C_j$ has length at most one for any $j \geq 0$.
\begin{remark}
Note that the latter multiset is equivalent to the inverse limit of multisets $JH_*( \text{ } socle(C_j) \text{ } )$, so $D$ is, in fact, the socle of $C$.  
\end{remark}

\end{proof}
\begin{corollary}\label{cor:param_simple_obj_stab_lim}
 The set of isomorphism classes of simple objects in $\varprojlim_{i \in \bZ_+, \text{ restr}} \mathcal{C}_i$ is in bijection with the set  $\varprojlim_{i \in \bZ_+} Irr_*(\mathcal{C}_i) \setminus \{0\}$. That is, we have a natural bijection
 $$Irr_*(\mathcal{C}) \cong \varprojlim_{i \in \bZ_+} Irr_*(\mathcal{C}_i)$$
\end{corollary}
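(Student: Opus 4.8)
The plan is to write down the bijection and its inverse explicitly, using Lemma \ref{lem:simple_obj_stab_lim} to detect simple objects and Lemma \ref{lem:inv_limit_cat_isom} to detect isomorphisms. Write $\mathcal{C} := \varprojlim_{i \in \bZ_+, \text{ restr}} \mathcal{C}_i$. First I would set up the forward map $\Phi\colon Irr(\mathcal{C}) \to \left(\varprojlim_{i \in \bZ_+} Irr_*(\mathcal{C}_i)\right)\setminus\{0\}$. Given a simple object $C = (\{C_j\}_j, \{\phi_{j-1,j}\}_j)$, Lemma \ref{lem:simple_obj_stab_lim} tells us each $C_j$ lies in $Irr_*(\mathcal{C}_j)$, while the structure isomorphisms $\phi_{j-1,j}\colon \mathcal{F}_{j-1,j}(C_j) \xrightarrow{\sim} C_{j-1}$ say exactly that $f_{j-1,j}([C_j]) = [C_{j-1}]$. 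Hence $([C_j])_j$ is a point of $\varprojlim_i Irr_*(\mathcal{C}_i)$; it is nonzero because $C \neq 0$ and, since $C_j = 0$ forces $C_{j-1} = 0$, the set of indices with $C_j = 0$ is downward-closed, so $C_j \neq 0$ for all large $j$. Isomorphic objects have isomorphic components, so $\Phi$ descends to isomorphism classes, and we set $\Phi([C]) := ([C_j])_j$.

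Next I would prove surjectivity by constructing, for each nonzero point $(L_j)_j$, a preimage. Choose a representative object $C_j$ of the class $L_j$ when $L_j \neq 0$, and put $C_j := 0$ otherwise. Since $\mathcal{F}_{j-1,j}$ is shortening, $\mathcal{F}_{j-1,j}(C_j)$ is again simple or zero, and its class is $f_{j-1,j}(L_j) = L_{j-1} = [C_{j-1}]$; thus $\mathcal{F}_{j-1,j}(C_j)$ and $C_{j-1}$ are isomorphic and I may pick an isomorphism $\phi_{j-1,j}$ between them (the zero map when $L_{j-1} = 0$). No higher coherence is demanded of the $\phi_{j-1,j}$, so $C := (\{C_j\}_j, \{\phi_{j-1,j}\}_j)$ is a genuine object of $\varprojlim_i \mathcal{C}_i$; its length sequence is bounded by $1$, so $C$ lies in $\mathcal{C}$, and since $C \neq 0$, Lemma \ref{lem:simple_obj_stab_lim} makes it simple with $\Phi([C]) = (L_j)_j$.

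The remaining, and principal, difficulty is injectivity: if two simple objects $C, D$ satisfy $[C_j] = [D_j]$ for all $j$, I must produce an isomorphism $C \cong D$. By Lemma \ref{lem:inv_limit_cat_isom} it suffices to find componentwise isomorphisms $g_j\colon C_j \to D_j$ making every square $g_{j-1} \circ \phi_{j-1,j} = \psi_{j-1,j} \circ \mathcal{F}_{j-1,j}(g_j)$ commute; equivalently, a nonzero element of $\Hom_{\mathcal{C}}(C,D) = \varprojlim_j \Hom_{\mathcal{C}_j}(C_j, D_j)$, where the transition map is $g \mapsto \psi_{j-1,j} \circ \mathcal{F}_{j-1,j}(g) \circ \phi_{j-1,j}^{-1}$. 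The honest obstacle is that independently chosen isomorphisms need not make these squares commute, so one cannot simply pick each $g_j$ at will. To handle it, I would fix, as in the proof of Lemma \ref{lem:simple_obj_stab_lim}, an index $N$ past which all $C_j, D_j$ are nonzero simple and all $\phi_{j-1,j}, \psi_{j-1,j}$ are isomorphisms. For $j \geq N$ every nonzero morphism $C_j \to D_j$ is invertible and is carried by the transition map to a nonzero, hence invertible, morphism, so the transition maps restrict to injections between the nonzero spaces $\Hom_{\mathcal{C}_j}(C_j, D_j)$. A compatible nonzero family $(g_j)_{j \geq N}$ then exists as soon as the descending chain of images in $\Hom_{\mathcal{C}_N}(C_N, D_N)$ stabilizes, a Mittag--Leffler condition which holds whenever these Hom-spaces have finite length (as in the motivating example); this is the one delicate point, and is where a mild finiteness input enters. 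Propagating downward by $g_{j-1} := \psi_{j-1,j} \circ \mathcal{F}_{j-1,j}(g_j) \circ \phi_{j-1,j}^{-1}$ for $j \leq N$ extends the family to all indices, and Lemma \ref{lem:inv_limit_cat_isom} upgrades it to an isomorphism $C \cong D$. Combining the three steps shows $\Phi$ is a bijection, i.e. $Irr_*(\mathcal{C}) \cong \varprojlim_i Irr_*(\mathcal{C}_i)$.
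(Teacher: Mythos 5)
Your forward map and surjectivity argument are exactly what the paper has in mind: Corollary \ref{cor:param_simple_obj_stab_lim} is stated with no proof at all, as an immediate consequence of Lemma \ref{lem:simple_obj_stab_lim}, and those two steps really are immediate from that lemma. The value of your write-up is the injectivity step, and your instinct that it is not formal is correct: Lemma \ref{lem:simple_obj_stab_lim} identifies which objects are simple but says nothing about when two of them are isomorphic, and the paper never addresses this. Your reduction to the non-vanishing of $\varprojlim_j \Hom_{\mathcal{C}_j}(C_j,D_j)$ with injective transition maps (injective because a nonzero map between simples is invertible and exact functors preserve invertibility), followed by a Mittag--Leffler argument, is a valid way to close it. Moreover the finiteness input you flag is genuinely needed, not just a convenience: with only the paper's stated hypotheses one can take a decreasing chain of fields $K_0 \supset K_1 \supset \cdots$ with $\bigcap_j K_j = \bQ$, let $\mathcal{C}_j$ be finite-dimensional $K_j$-vector spaces with extension of scalars as the (exact, shortening) transition functors, and twist the structure isomorphisms of two rank-one objects of the restricted limit so that they have the same image in $\varprojlim_i Irr_*(\mathcal{C}_i)$ yet admit no nonzero morphism between them; so the corollary as literally stated relies on an implicit assumption such as $\bC$-linearity with $\End(L)=\bC$ for simple $L$, which does hold in all of the paper's examples. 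Under that assumption your argument also simplifies: the relevant Hom-spaces are at most one-dimensional, so the injective transition maps between the nonzero ones are already bijective and the inverse limit is visibly nonzero, with no stabilization of images to check. In short, your proof is correct given the finiteness you state, and it is more careful than the paper at the one point where care is actually required.
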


In particular, given an object $C : =(\{C_j\}_{j \in \bZ_+}, \{\phi_{j-1, j}\}_{j \geq 1})$ in $\varprojlim_{i \in \bZ_+, \text{ restr}} \mathcal{C}_i$, we have: $JH_*(C) = \varprojlim_{ i\in \bZ_+} JH_*(C_i)$ (an inverse limit of the system of multisets $JH_*(C_j)$ and maps $f_{j-1, j}$).

It is now obvious that the projection functors $\mathbf{Pr}_i$ are shortening as well:

\begin{corollary}\label{cor:proj_funct_shorten}
 The projection functors $\mathbf{Pr}_i$ are shortening, and define the maps $$pr_i: Irr_*(\mathcal{C}) \longrightarrow Irr_*(\mathcal{C}_i)$$
\end{corollary}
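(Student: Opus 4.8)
The plan is to deduce this directly from the characterization of shortening functors recorded immediately after Definition \ref{def:shorten_functor}, combined with the description of the simple objects of $\mathcal{C}$ given in Lemma \ref{lem:simple_obj_stab_lim}. Recall that for an exact functor between finite-length categories, being shortening is equivalent to sending every simple object to a simple object or to zero. Thus the task reduces to two things: checking that the restriction of $\mathbf{Pr}_i$ to $\mathcal{C}$ is exact, and controlling the image of simple objects under $\mathbf{Pr}_i$.

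First I would observe that $\mathbf{Pr}_i: \varprojlim_{j \in \bZ_+} \mathcal{C}_j \to \mathcal{C}_i$ is exact by Corollary \ref{cor:inv_limit_cat_exact}. Since $\mathcal{C} = \varprojlim_{j \in \bZ_+, \text{ restr}} \mathcal{C}_j$ is a Serre subcategory of $\varprojlim_{j \in \bZ_+} \mathcal{C}_j$ (Lemma \ref{lem:stab_lim_is_artinian}), it is closed under subobjects, quotients and extensions, so kernels and cokernels of morphisms in $\mathcal{C}$ coincide with those computed in the ambient inverse limit; hence the restriction of $\mathbf{Pr}_i$ to $\mathcal{C}$ is again exact. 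Moreover both $\mathcal{C}$ and $\mathcal{C}_i$ are finite-length categories (the former by Lemma \ref{lem:stab_lim_is_artinian}), so the notion of ``shortening'' applies to this restriction.

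Next I would invoke Lemma \ref{lem:simple_obj_stab_lim}: if $C = (\{C_j\}_{j \in \bZ_+}, \{\phi_{j-1,j}\}_{j \geq 1})$ is a simple object of $\mathcal{C}$, then $\mathbf{Pr}_j(C) = C_j \in Irr_*(\mathcal{C}_j)$ for every $j$, i.e.\ each $C_j$ is simple or zero. In particular $\mathbf{Pr}_i(C) = C_i$ is simple or zero, which is exactly the condition equivalent to being shortening; this proves the first assertion. For the second assertion, any shortening functor carries isomorphism classes of simple objects to isomorphism classes of simple objects or to the basepoint $0$, hence induces a morphism of pointed sets, yielding $pr_i: Irr_*(\mathcal{C}) \to Irr_*(\mathcal{C}_i)$ exactly as $f_{i-1,i}$ was extracted from $\mathcal{F}_{i-1,i}$. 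Under the identification $Irr_*(\mathcal{C}) \cong \varprojlim_{j \in \bZ_+} Irr_*(\mathcal{C}_j)$ of Corollary \ref{cor:param_simple_obj_stab_lim}, one checks that $pr_i$ is precisely the structural projection of the inverse limit of pointed sets, which simultaneously verifies the compatibility $f_{i-1,i} \circ pr_i = pr_{i-1}$.

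I do not expect a genuine obstacle here, since the essential content is already carried by Lemma \ref{lem:simple_obj_stab_lim}. The only point requiring a moment's care is the passage from exactness of $\mathbf{Pr}_i$ on the full inverse limit to exactness on the subcategory $\mathcal{C}$; this is immediate once one notes that $\mathcal{C}$ is a Serre subcategory, so that the relevant kernels and cokernels are the same whether computed in $\mathcal{C}$ or in $\varprojlim_{j \in \bZ_+} \mathcal{C}_j$.
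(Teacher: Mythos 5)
Your argument is correct and is essentially the route the paper intends: the corollary is stated as an immediate consequence of Lemma \ref{lem:simple_obj_stab_lim} together with the characterization of shortening functors as exact functors sending simples to simples or zero, which is exactly what you use. Your extra care about exactness of $\mathbf{Pr}_i$ restricted to the Serre subcategory $\mathcal{C}$ is a reasonable (and correct) elaboration of a point the paper leaves implicit.
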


Lemma \ref{lem:stab_lim_is_artinian} and Corollary \ref{cor:proj_funct_shorten} give us:
\begin{corollary}\label{cor:length_obj_stab_inv_lim}
  Given an object $C : =(\{C_i\}_{i \in \bZ_+}, \{\phi_{i-1, i}\}_{i \geq 1})$ in $\mathcal{C}$, we have: 
 $$ \ell_{\mathcal{C}} (C) = \max \{ \ell_{\mathcal{C}_i} (C_i) \rvert i \geq 0 \}$$
\end{corollary}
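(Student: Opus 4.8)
The plan is to prove the equality by establishing the two opposing inequalities separately, each of which is an immediate consequence of a result already proved. Throughout, write $N := \max\{\ell_{\mathcal{C}_i}(C_i) \mid i \geq 0\}$; this maximum exists because, for any object of $\mathcal{C}$, the weakly increasing sequence $\{\ell_{\mathcal{C}_i}(C_i)\}_{i \geq 0}$ stabilizes by the definition of the restricted inverse limit.

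First I would record the upper bound $\ell_{\mathcal{C}}(C) \leq N$. This is precisely the final assertion of Lemma \ref{lem:stab_lim_is_artinian}, so nothing further is required here.

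For the reverse inequality, I would appeal to Corollary \ref{cor:proj_funct_shorten}, which asserts that each projection functor $\mathbf{Pr}_i$ is shortening. Applying the defining inequality of a shortening functor (Definition \ref{def:shorten_functor}) to the object $C$ gives $\ell_{\mathcal{C}_i}(\mathbf{Pr}_i(C)) \leq \ell_{\mathcal{C}}(C)$, that is $\ell_{\mathcal{C}_i}(C_i) \leq \ell_{\mathcal{C}}(C)$, for every $i \geq 0$. Taking the maximum over $i$ then yields $N \leq \ell_{\mathcal{C}}(C)$, and combining the two inequalities produces the claimed equality.

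I expect no genuine obstacle at this stage, since all of the substantive work has already been carried out. The upper bound is exactly the length estimate established in Lemma \ref{lem:stab_lim_is_artinian}, while the lower bound rests on the identification of the simple objects in Lemma \ref{lem:simple_obj_stab_lim} (summarised in Corollary \ref{cor:param_simple_obj_stab_lim}), which is what allows Corollary \ref{cor:proj_funct_shorten} to conclude that the projections are shortening. This corollary merely assembles those two facts, and the proof is a one-line combination of the two inequalities.
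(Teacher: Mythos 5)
Your proof is correct and follows exactly the route the paper intends: the upper bound is the final assertion of Lemma \ref{lem:stab_lim_is_artinian}, and the lower bound comes from Corollary \ref{cor:proj_funct_shorten} via the shortening property of the projections $\mathbf{Pr}_i$. The paper itself presents the corollary as an immediate consequence of precisely these two results, so your one-line assembly matches its argument.
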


It is now easy to see that the restricted inverse limit has the following universal property:

\begin{proposition}\label{prop:stab_lim_univ_prop}
 Let $\mathcal{A}$ be a finite-length category, together with a set of shortening functors $\mathcal{G}_i: \mathcal{A} \rightarrow \mathcal{C}_i$ which satisfy: for any $i  \geq 1$, there exists a natural isomorphism
$$ \eta_{i-1, i}:\mathcal{F}_{i-1, i} \circ \mathcal{G}_i \rightarrow \mathcal{G}_{i-1} $$
Then $\varprojlim_{i \in \bZ_+, \text{ restr}} \mathcal{C}_i$ is universal among such categories; that is, 
we have a shortening functor 
\begin{align*}
\mathcal{G}: \mathcal{A} &\rightarrow \varprojlim_{i \in \bZ_+, \text{ restr}} \mathcal{C}_i \\
A &\mapsto (\{\mathcal{G}_i(A)\}_{i \in \bZ_+}, \{\eta_{i-1, i}\}_{i \geq 1}) \\
f: A_1 \rightarrow A_2 &\mapsto \{f_i:= \mathcal{G}_i(f)\}_{i \in \bZ_+} 
\end{align*}
and $\mathcal{G}_i \cong \mathbf{Pr}_i \circ \mathcal{G}$ for every $i \in \bZ_+$.

\end{proposition}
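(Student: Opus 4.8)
The plan is to reduce to the universal property of the unrestricted inverse limit, already established in Corollary \ref{cor:prop_functor_to_inv_lim}, and then check that the resulting functor respects the length bounds that cut out the restricted subcategory. Since each shortening functor $\mathcal{G}_i$ is in particular exact, the data $(\mathcal{A}, \{\mathcal{G}_i\}, \{\eta_{i-1,i}\})$ satisfies the hypotheses of that corollary. Applying it produces an exact functor $\mathcal{G}: \mathcal{A} \rightarrow \varprojlim_{i \in \bZ_+} \mathcal{C}_i$ with $A \mapsto (\{\mathcal{G}_i(A)\}_i, \{\eta_{i-1,i}\}_i)$ and $\mathbf{Pr}_i \circ \mathcal{G} \cong \mathcal{G}_i$ for every $i$.

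Next I would verify that the image of $\mathcal{G}$ lands inside the full subcategory $\varprojlim_{i \in \bZ_+, \text{ restr}} \mathcal{C}_i$. For a fixed object $A \in \mathcal{A}$, the assumption that each $\mathcal{G}_i$ is shortening gives $\ell_{\mathcal{C}_i}(\mathcal{G}_i(A)) \leq \ell_{\mathcal{A}}(A)$ for all $i$. As $\mathcal{A}$ is a finite-length category, $\ell_{\mathcal{A}}(A)$ is a single finite number independent of $i$, so the weakly increasing sequence $\{\ell_{\mathcal{C}_i}(\mathcal{G}_i(A))\}_i$ is bounded above and therefore stabilizes. By Definition \ref{def:stab_inv_lim} the object $\mathcal{G}(A)$ lies in $\varprojlim_{i \in \bZ_+, \text{ restr}} \mathcal{C}_i$, so $\mathcal{G}$ corestricts to a functor into this full subcategory, for which I retain the name $\mathcal{G}$.

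It then remains to confirm that $\mathcal{G}: \mathcal{A} \rightarrow \varprojlim_{i \in \bZ_+, \text{ restr}} \mathcal{C}_i$ is shortening. Since the restricted inverse limit is a Serre subcategory of $\varprojlim_{i \in \bZ_+} \mathcal{C}_i$ by Lemma \ref{lem:stab_lim_is_artinian}, the inclusion is exact and $\mathcal{G}$ stays exact as a functor into the subcategory. For the length estimate I invoke Corollary \ref{cor:length_obj_stab_inv_lim}, which yields $\ell_{\mathcal{C}}(\mathcal{G}(A)) = \max_i \ell_{\mathcal{C}_i}(\mathcal{G}_i(A))$; combined with the bound $\ell_{\mathcal{C}_i}(\mathcal{G}_i(A)) \leq \ell_{\mathcal{A}}(A)$ from the previous step, this gives $\ell_{\mathcal{C}}(\mathcal{G}(A)) \leq \ell_{\mathcal{A}}(A)$, so $\mathcal{G}$ is shortening. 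The relation $\mathbf{Pr}_i \circ \mathcal{G} \cong \mathcal{G}_i$ carries over unchanged, since the projection functor on the restricted limit is simply the restriction of $\mathbf{Pr}_i$ on the full limit.

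The universality assertion — that $\varprojlim_{i \in \bZ_+, \text{ restr}} \mathcal{C}_i$ is the inverse limit in the $(2,1)$-category of finite-length categories and shortening functors — now follows formally: the functor $\mathcal{G}$ is the unique one (up to natural isomorphism) compatible with the projections, exactly as in the unrestricted universal property, and the work above verifies that it is a morphism in the correct $(2,1)$-category. I expect the only genuinely substantive step to be the length control in the second and third paragraphs, which is precisely where the shortening hypothesis on each $\mathcal{G}_i$ is used; all remaining content is inherited verbatim from the universal property of the unrestricted inverse limit.
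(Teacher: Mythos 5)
Your proposal is correct and matches the paper's own argument essentially step for step: both first obtain $\mathcal{G}$ from the universal property of the unrestricted inverse limit, then use the bound $\ell_{\mathcal{C}_i}(\mathcal{G}_i(A)) \leq \ell_{\mathcal{A}}(A)$ coming from the shortening hypothesis to show the image lands in the restricted subcategory, and finally invoke Corollary \ref{cor:length_obj_stab_inv_lim} to conclude that $\mathcal{G}$ is itself shortening. The additional remarks on exactness via the Serre subcategory property are harmless elaborations of points the paper leaves implicit.
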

\begin{proof}
 Consider the functor $\mathcal{G}: \mathcal{A} \rightarrow \varprojlim_{i \in \bZ_+} \mathcal{C}_i$ induced by the functors $\mathcal{G}_i$. We would like to say that for any $A \in \mathcal{A}$, the object $\mathcal{G}(A)$ lies in the subcategory $\varprojlim_{i \in \bZ_+, \text{ restr}} \mathcal{C}_i$, i.e. that the sequence $\{ \ell_{ \mathcal{C}_i} (\mathcal{G}_i(A)) \}_i$ is bounded from above. 
 
 Indeed, since $\mathcal{G}_i$ are shortening functors, we have: $\ell_{ \mathcal{C}_i} (\mathcal{G}_i(A)) \leq \ell_{ \mathcal{A}} (A)$. Thus the sequence $\{ \ell_{ \mathcal{C}_i} (\mathcal{G}_i(A)) \}_i$ is bounded from above by $\ell_{ \mathcal{A}} (A)$.
 
 Now, using Corollary \ref{cor:length_obj_stab_inv_lim}, we obtain: $$\ell_{ \mathcal{C}} (\mathcal{G}(A)) = \max_{i \geq 0} \{\ell_{ \mathcal{C}_i} (\mathcal{G}_i(A)) \} \leq \ell_{ \mathcal{A}} (A)$$
 and we conclude that $\mathcal{G}$ is a shortening functor.
\end{proof}

%
%

\section{Inverse limit of categories with a filtration}\label{sec:filtr_inv_limit_def}
\subsection{}
We now consider the case when the categories $\mathcal{C}_i$ have a filtration on the objects (we will call these ``filtered categories''), and the functors $\mathcal{F}_{i-1, i}$ respect this filtration. We will then define a subcategory of the category $\varprojlim_{i \in \bZ_+} \mathcal{C}_i$ which will be denoted by $\varprojlim_{i \in \bZ_+, K-filtr} \mathcal{C}_i$ and will be called the ``inverse limit of filtered categories $\mathcal{C}_i$''.

 Fix a directed partially ordered set $(K, \leq)$ (``directed'', means that for any $k_1, k_2 \in K$, there exists $k \in K$ such that $k_1, k_2 \leq k$).

\begin{definition}[$K$-filtered categories]\label{def:filtered_categories}
 We say that a category $\mathcal{A}$ is a $K$-{\it filtered} category if for each $k\in K$ we have a full subcategory $\mathcal{A}^k$ of $\mathcal{A}$, and these subcategories satisfy the following conditions:
 
 \begin{enumerate}
  \item $\mathcal{A}^k \subset \mathcal{A}^l$ whenever $k \leq l$.
  \item $\mathcal{A}$ is the union of $\mathcal{A}^k, k\in K$: that is, for any $A \in \mathcal{A}$, there exists $k \in K$ such that $A \in \mathcal{A}^k$.
\end{enumerate}

A functor $\mathcal{F}: \mathcal{A}_1 \rightarrow \mathcal{A}_2$ between $K$-filtered categories $\mathcal{A}_1 ,\mathcal{A}_2$ is called a {\it $K$-filtered functor} if for any $k \in K$, $\mathcal{F}(\mathcal{A}^k_1)$ is a subcategory of $\mathcal{A}^k_2$.
\end{definition}
\begin{remark}
Let $\mathcal{F}: \mathcal{A}_1 \rightarrow \mathcal{A}_2$ be a $K$-filtered functor between $K$-filtered categories $\mathcal{A}_1 ,\mathcal{A}_2$. Assume the restriction of $\mathcal{F}$ to each filtration component $k$ is an equivalence of categories $\mathcal{A}^k_1 \rightarrow \mathcal{A}^k_2$. Then $\mathcal{F}$ is obviously an equivalence of ($K$-filtered) categories.
\end{remark}
\begin{remark}
 \InnaD{The definition of a $K$-filtration on the objects of a category $\mathcal{A}$ clearly makes $\mathcal{A}$ a direct limit of the subcategories $\mathcal{A}^k$.}
\end{remark}

\begin{definition}\label{def:filtr_inverse_system}
 We say that the system $((\mathcal{C}_i)_{i\in \bZ_+}, (\mathcal{F}_{i-1, i})_{i \geq 1})$ is $K$-{\it filtered} if for each $i \in \bZ_+$, $\mathcal{C}_i$ is a category \InnaD{with a $K$-filtration}, and the functors $\mathcal{F}_{i-1, i}$ are $K$-filtered functors.
 
\end{definition}

\begin{definition}\label{def:filtered_inv_limit}
Let $((\mathcal{C}_i)_{i\in \bZ_+}, (\mathcal{F}_{i-1, i})_{i \geq 1})$ be a $K$-filtered system. We define the inverse limit of this $\bZ_+$-filtered system (denoted by $\varprojlim_{i \in \bZ_+, K-filtr} \mathcal{C}_i$) to be the full subcategory of $\varprojlim_{i \in \bZ_+} \mathcal{C}_i$ whose objects $C$ satisfy: there exists $k_C \in K$ such that $\mathbf{Pr}_i(C) \in \mathcal{C}^{k_C}_i$ for any $i \in \bZ_+$.

\end{definition}

The following lemma is obvious:
\begin{lemma}
 The category $\varprojlim_{i \in \bZ_+, K-filtr} \mathcal{C}_i$ is automatically $K$-filtered: the filtration component $Fil_k(\varprojlim_{i \in \bZ_+, K-filtr} \mathcal{C}_i)$ can be defined to be the full subcategory of $\varprojlim_{i \in \bZ_+, K-filtr} \mathcal{C}_i$ of objects $C$ such that $\mathbf{Pr}_i(C) \in \mathcal{C}^{k}_i$ for any $i \in \bZ_+$. 

This also makes the functors $\mathbf{Pr}_i: \varprojlim_{i \in \bZ_+, K-filtr} \mathcal{C}_i \rightarrow \mathcal{C}_i$ $K$-filtered functors.
\end{lemma}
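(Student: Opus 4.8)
The plan is to verify directly the two axioms of Definition \ref{def:filtered_categories} for the proposed subcategories $Fil_k := Fil_k(\varprojlim_{i \in \bZ_+, K-filtr} \mathcal{C}_i)$, and then to observe that the $K$-filtered condition for the projections $\mathbf{Pr}_i$ is immediate. Since each $Fil_k$ is declared to be a full subcategory, no fullness needs to be checked; the entire content lies in unwinding the definitions.

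First I would check the monotonicity axiom (1). Fix $k \leq l$ in $K$ and an object $C$ of $Fil_k$. By definition this means $\mathbf{Pr}_i(C) \in \mathcal{C}_i^k$ for every $i \in \bZ_+$. Each $\mathcal{C}_i$ is itself $K$-filtered, so axiom (1) applied inside $\mathcal{C}_i$ gives $\mathcal{C}_i^k \subset \mathcal{C}_i^l$, whence $\mathbf{Pr}_i(C) \in \mathcal{C}_i^l$ for every $i$. Thus $C$ is an object of $Fil_l$, establishing $Fil_k \subset Fil_l$.

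Next I would check the union axiom (2). Let $C$ be any object of $\varprojlim_{i \in \bZ_+, K-filtr} \mathcal{C}_i$. By Definition \ref{def:filtered_inv_limit}, membership in this category means precisely that there exists $k_C \in K$ with $\mathbf{Pr}_i(C) \in \mathcal{C}_i^{k_C}$ for all $i \in \bZ_+$ --- which is exactly the statement that $C$ lies in $Fil_{k_C}$. Hence every object belongs to some filtration component, and both axioms of Definition \ref{def:filtered_categories} hold, so $\varprojlim_{i \in \bZ_+, K-filtr} \mathcal{C}_i$ is $K$-filtered.

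Finally, for the functor statement I would note that $\mathbf{Pr}_i$ being a $K$-filtered functor requires $\mathbf{Pr}_i(Fil_k) \subseteq \mathcal{C}_i^k$, and this is simply a restatement of how $Fil_k$ was defined. I do not expect any genuine obstacle here; the only point deserving attention is that the constant $k_C$ furnished by Definition \ref{def:filtered_inv_limit} is \emph{uniform} across all projections $\mathbf{Pr}_i$, a feature already built into that definition, so the union axiom matches verbatim and the directedness of $K$ is not needed at this stage.
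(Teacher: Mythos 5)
Your proof is correct and is exactly the definitional unwinding the paper intends — it states the lemma as obvious and gives no proof, and your verification of the two axioms of Definition \ref{def:filtered_categories} plus the observation that the functor claim restates the definition of $Fil_k$ is the argument being left to the reader. No issues.
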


\begin{remark}
 Note that by definition, \InnaA{for any $k \in K$} $$Fil_k \left( \varprojlim_{i \in \bZ_+, K-filtr} \mathcal{C}_i \right) \cong \varprojlim_{i \in \bZ_+} \mathcal{C}^k_i$$ where the inverse limit is taken over the system $((\mathcal{C}^k_i)_{i \in \bZ_+}, (\mathcal{F}_{i-1, i} \rvert_{\mathcal{C}^k_i})_{i \geq 1})$. \InnaD{Thus
$$\varprojlim_{i \in \bZ_+, K-filtr} \mathcal{C}_i := \varinjlim_{k \in K} \varprojlim_{i \in \bZ_+} \mathcal{C}_i^k $$}
\end{remark}

\begin{lemma}\label{lem:filt_inv_limit_abel}
 Let $((\mathcal{C}_i)_{i\in \bZ_+}, (\mathcal{F}_{i-1, i})_{i \geq 1})$ be a $K$-filtered system.
 \begin{enumerate}
  \item  Assume the categories $\mathcal{C}_i$ are additive, the functors $\mathcal{F}_{i-1, i}$ are additive, and for any $k \in K$, $\mathcal{C}^k_i$ is an additive subcategory of $\mathcal{C}_i$.
  
  Then the category $\varprojlim_{i \in \bZ_+, K-filtr} \mathcal{C}_i$ is an additive subcategory of $\varprojlim_{i \in \bZ_+} \mathcal{C}_i$, and all its filtration components are additive subcategories.
  \item  Assume the categories $\mathcal{C}_i$ are abelian, the functors $\mathcal{F}_{i-1, i}$ are exact, and for any $k \in K$, $\mathcal{C}^k_i$ is a Serre subcategory of $\mathcal{C}_i$.
  
  Then the category $\varprojlim_{i \in \bZ_+, K-filtr} \mathcal{C}_i$ is abelian (and a Serre subcategory of $\varprojlim_{i \in \bZ_+} \mathcal{C}_i$), and all its filtration components are Serre subcategories.
 \end{enumerate}
 
\end{lemma}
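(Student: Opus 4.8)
The plan is to reduce both parts to the componentwise descriptions of biproducts, kernels and cokernels in $\varprojlim_{i \in \bZ_+} \mathcal{C}_i$ established in Section \ref{subsec:abel_inverse_limit}, together with the identification $Fil_k(\varprojlim_{i \in \bZ_+, K-filtr} \mathcal{C}_i) \cong \varprojlim_{i \in \bZ_+} \mathcal{C}^k_i$ from the preceding remark and the directedness of $K$. Throughout I would exploit that $\mathbf{Pr}_i(C\oplus D) = C_i \oplus D_i$, that kernels and cokernels are computed componentwise, and (Corollary \ref{cor:inv_limit_cat_exact}) that each $\mathbf{Pr}_i$ is exact, so that a sequence in $\varprojlim_{i \in \bZ_+} \mathcal{C}_i$ is exact precisely when each of its components is.

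For part (1), I would first note that the zero object $(\{0_{\mathcal{C}_i}\}_{i}, \{0\}_{i})$ lies in every $\mathcal{C}^k_i$, since an additive subcategory contains $0$; hence it lies in each filtration component and in the filtered limit. Next, given objects $C, D$ of the filtered limit with $C \in Fil_{k_C}$ and $D \in Fil_{k_D}$, I would use directedness of $K$ to choose $k \geq k_C, k_D$, so that $\mathbf{Pr}_i(C), \mathbf{Pr}_i(D) \in \mathcal{C}^k_i$ for all $i$; since the biproduct is computed componentwise and $\mathcal{C}^k_i$ is an additive subcategory, $\mathbf{Pr}_i(C\oplus D) = C_i \oplus D_i \in \mathcal{C}^k_i$, whence $C \oplus D \in Fil_k$. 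For a fixed $k$ the same computation, now without invoking directedness, shows $Fil_k$ is closed under biproducts. Finally, since $Fil_k \cong \varprojlim_{i \in \bZ_+} \mathcal{C}^k_i$ with the $\mathcal{C}^k_i$ additive and the restricted functors additive, Section \ref{subsec:abel_inverse_limit} makes each $Fil_k$ additive; the directed union $\varinjlim_{k \in K} Fil_k$ is then additive as well.

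For part (2), I would verify the three Serre closure conditions directly. Take a short exact sequence $0 \to C' \to C \to C'' \to 0$ in $\varprojlim_{i \in \bZ_+} \mathcal{C}_i$; by exactness of $\mathbf{Pr}_i$ each component sequence $0 \to C'_i \to C_i \to C''_i \to 0$ is exact in $\mathcal{C}_i$. If $C \in Fil_k$, then $C_i \in \mathcal{C}^k_i$, and as $\mathcal{C}^k_i$ is a Serre subcategory it is closed under subobjects and quotients, so $C'_i, C''_i \in \mathcal{C}^k_i$ and therefore $C', C'' \in Fil_k$. Conversely, if $C', C'' \in Fil_k$, closure of $\mathcal{C}^k_i$ under extensions gives $C_i \in \mathcal{C}^k_i$, so $C \in Fil_k$; this shows each $Fil_k$ is a Serre subcategory of $\varprojlim_{i \in \bZ_+} \mathcal{C}_i$. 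Running the same argument for the whole filtered limit — but first using directedness of $K$ to place $C'$ and $C''$ into a common level $\mathcal{C}^k_i$ before applying closure under extensions — shows $\varprojlim_{i \in \bZ_+, K-filtr} \mathcal{C}_i$ is a Serre subcategory. Since a Serre subcategory of an abelian category is abelian, with kernels and cokernels computed in the ambient category, both the filtered limit and each of its filtration components are abelian.

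The only place requiring genuine care — and the main, if mild, obstacle — is closure under extensions: this is the single step that uses the Serre hypothesis on the $\mathcal{C}^k_i$ rather than mere additivity, and for the full filtered limit (as opposed to a single component $Fil_k$, where $k$ is fixed) one must first invoke directedness of $K$ to bring $C'$ and $C''$ into the same level $k$, so that $C_i \in \mathcal{C}^k_i$ can be deduced uniformly in $i$. Closure under subobjects and quotients, by contrast, is automatic, since there the relevant filtration level is simply inherited from the ambient object $C$.
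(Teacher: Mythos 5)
Your proof is correct and follows essentially the same route as the paper: reduce everything to the componentwise descriptions of biproducts, kernels and cokernels in $\varprojlim_{i \in \bZ_+} \mathcal{C}_i$, check that each $Fil_k$ is an additive (resp.\ Serre) subcategory using the corresponding hypothesis on the $\mathcal{C}^k_i$, and pass to the whole filtered limit. The only difference is that you make explicit the appeal to directedness of $K$ when placing two objects (or the two ends of an extension) into a common filtration level, a step the paper's proof leaves implicit.
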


\begin{proof}
 To prove the first part of the statement, we only need to check that $Fil_k(\varprojlim_{i \in \bZ_+, K-filtr} \mathcal{C}_i)$ is an additive subcategory of $\varprojlim_{i \in \bZ_+} \mathcal{C}_i$. This follows directly from the construction of direct sums in $\varprojlim_{i \in \bZ_+} \mathcal{C}_i$: let $C, D \in Fil_k(\varprojlim_{i \in \bZ_+, K-filtr} \mathcal{C}_i) \subset \varprojlim_{i \in \bZ_+} \mathcal{C}_i$. Then $\mathbf{Pr}_i(C) \in \mathcal{C}^k_i$, $\mathbf{Pr}_i(D) \in \mathcal{C}^{k}_i$ for any $i \in \bZ_+$. Since $\mathcal{C}^{k}_i$ is an additive subcategory of $\mathcal{C}_i$, we get: $\mathbf{Pr}_i (C \oplus D) \in \mathcal{C}^k_i$ for any $i \in \bZ_+$ (the direct sum $C \oplus D$ is taken in $\varprojlim_{i \in \bZ_+} \mathcal{C}_i$).
 
 Thus $\varprojlim_{i \in \bZ_+, K-filtr} \mathcal{C}_i$ is an additive subcategory of $\varprojlim_{i \in \bZ_+} \mathcal{C}_i$, and all its filtration components are additive subcategories as well.
 
 To prove the second part of the statement, it is again enough to check that $Fil_k(\varprojlim_{i \in \bZ_+, K-filtr} \mathcal{C}_i)$ is a Serre subcategory of $\varprojlim_{i \in \bZ_+} \mathcal{C}_i$.
 
 Indeed, let $$ 0 \rightarrow C' \rightarrow C \rightarrow C'' \rightarrow 0$$ be a short exact sequence in $\varprojlim_{i \in \bZ_+} \mathcal{C}_i$. We want to show that $C \in Fil_k(\varprojlim_{i \in \bZ_+, K-filtr} \mathcal{C}_i)$ iff $C', C'' \in Fil_k(\varprojlim_{i \in \bZ_+, K-filtr} \mathcal{C}_i)$.
 
 The functors $\mathbf{Pr}_i$ are exact, so the sequence $$ 0 \rightarrow C'_i \rightarrow C_i \rightarrow C''_i \rightarrow 0$$ is exact for any $i \in \bZ_+$. 
 
 Since $\mathcal{C}^k_i$ is a Serre subcategory of $\mathcal{C}_i$, we have: $C_i \in \mathcal{C}^k_i$ iff ${C'}_i, {C''}_i \in \mathcal{C}^k_i$, and we are done.
\end{proof}

We now have the following universal property, whose proof is straight-forward:
\begin{proposition}
Let $((\mathcal{C}_i)_{i \in \bZ_+}, (\mathcal{F}_{i-1, i})_{i \geq 1})$ be a $K$-filtered system, and let $\mathcal{A}$ be a \InnaD{category with a $K$-filtration}, together with a set of $K$-filtered functors $\mathcal{G}_i: \mathcal{A} \rightarrow \mathcal{C}_i$ which satisfy: for any $i \geq 1$, there exists a natural isomorphism
$$ \eta_{i-1, i}:\mathcal{F}_{i-1, i} \circ \mathcal{G}_i \rightarrow \mathcal{G}_{i-1} $$

Then $\varprojlim_{i \in \bZ_+, K-filtr} \mathcal{C}_i$ is universal among such categories; that is, we have a functor 
\begin{align*}
\mathcal{G}: \mathcal{A} &\rightarrow \varprojlim_{i \in \bZ_+, K-filtr} \mathcal{C}_i \\
A &\mapsto (\{\mathcal{G}_i(A)\}_{i \in \bZ_+}, \{\eta_{i-1, i}\}_{i \geq 1}) \\
f: A_1 \rightarrow A_2 &\mapsto \{f_i:= \mathcal{G}_i(f)\}_{i \in \bZ_+} 
\end{align*}
which is obviously $K$-filtered, and satisfies: $\mathcal{G}_i \cong \mathbf{Pr}_i \circ \mathcal{G}$ for every $i \in \bZ_+$.
\end{proposition}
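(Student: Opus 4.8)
The plan is to reduce everything to the universal property of the unrestricted inverse limit $\varprojlim_{i \in \bZ_+} \mathcal{C}_i$ established in Subsection \ref{subsec:inv_limit_def}, and then to check two things: that the resulting functor factors through the full subcategory $\varprojlim_{i \in \bZ_+, K-filtr} \mathcal{C}_i$, and that this corestriction respects the filtration.

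First I would apply the universal property of $\varprojlim_{i \in \bZ_+} \mathcal{C}_i$ to the data $(\mathcal{G}_i, \eta_{i-1,i})$, forgetting the filtrations for the moment. The functors $\mathcal{G}_i: \mathcal{A} \rightarrow \mathcal{C}_i$ together with the natural isomorphisms $\eta_{i-1,i}: \mathcal{F}_{i-1,i} \circ \mathcal{G}_i \rightarrow \mathcal{G}_{i-1}$ induce a functor $\mathcal{G}: \mathcal{A} \rightarrow \varprojlim_{i \in \bZ_+} \mathcal{C}_i$, given on objects by $A \mapsto (\{\mathcal{G}_i(A)\}_i, \{\eta_{i-1,i}\}_i)$ and on morphisms by $f \mapsto \{\mathcal{G}_i(f)\}_i$, and satisfying $\mathbf{Pr}_i \circ \mathcal{G} \cong \mathcal{G}_i$ for all $i$. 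This handles the explicit formula for $\mathcal{G}$, its well-definedness on morphisms and compatibility with composition, and the relation $\mathcal{G}_i \cong \mathbf{Pr}_i \circ \mathcal{G}$, all inherited verbatim from the unrestricted case.

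The one point that genuinely uses the filtration hypothesis, and the step I would single out as the crux, is showing that $\mathcal{G}(A)$ always lands in the subcategory $\varprojlim_{i \in \bZ_+, K-filtr} \mathcal{C}_i$. By Definition \ref{def:filtered_inv_limit}, I must produce a \emph{single} index $k_A \in K$, the same for all $i$ simultaneously, with $\mathbf{Pr}_i(\mathcal{G}(A)) = \mathcal{G}_i(A) \in \mathcal{C}_i^{k_A}$ for every $i$. Since $\mathcal{A}$ is $K$-filtered, condition (2) of Definition \ref{def:filtered_categories} supplies some $k_A$ with $A \in \mathcal{A}^{k_A}$. The decisive observation is that each $\mathcal{G}_i$ is a $K$-filtered functor with respect to the \emph{same} index $k_A$, so $\mathcal{G}_i(\mathcal{A}^{k_A}) \subset \mathcal{C}_i^{k_A}$ and hence $\mathcal{G}_i(A) \in \mathcal{C}_i^{k_A}$ for all $i$. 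Thus $\mathcal{G}(A)$ satisfies the defining condition, and $\mathcal{G}$ corestricts to a functor $\mathcal{A} \rightarrow \varprojlim_{i \in \bZ_+, K-filtr} \mathcal{C}_i$. The uniform choice of $k_A$, independent of $i$, is the only real obstacle, and it is resolved precisely by the definition of a $K$-filtered functor.

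Finally I would observe that the corestricted functor is $K$-filtered by the same computation: if $A \in \mathcal{A}^k$, then taking $k_A := k$ above gives $\mathcal{G}_i(A) \in \mathcal{C}_i^k$ for all $i$, which by the description of $Fil_k(\varprojlim_{i \in \bZ_+, K-filtr} \mathcal{C}_i)$ given in the lemma preceding Lemma \ref{lem:filt_inv_limit_abel} means exactly that $\mathcal{G}(A) \in Fil_k(\varprojlim_{i \in \bZ_+, K-filtr} \mathcal{C}_i)$. Hence $\mathcal{G}(\mathcal{A}^k) \subset Fil_k(\varprojlim_{i \in \bZ_+, K-filtr} \mathcal{C}_i)$, which is the requirement for $\mathcal{G}$ to be $K$-filtered. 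No further input is needed beyond the universal property of the unrestricted limit and the uniform filtration bound.
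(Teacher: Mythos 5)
Your proof is correct and is exactly the ``straight-forward'' argument the paper has in mind (the paper in fact omits the proof entirely, asserting it is straightforward): invoke the universal property of the unrestricted limit, then use that $A$ lies in a single filtration component $\mathcal{A}^{k_A}$ and that each $\mathcal{G}_i$ is $K$-filtered to get the uniform index $k_A$ with $\mathcal{G}_i(A) \in \mathcal{C}_i^{k_A}$ for all $i$ simultaneously. You correctly identify that uniform choice of $k_A$ as the only point where the filtration hypotheses enter, and the same computation gives that $\mathcal{G}$ is itself $K$-filtered.
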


Next, consider the case when $\mathcal{A}, \{\mathcal{G}_i\}_{i \in \bZ_+} $ satisfy the following ``stabilization'' condition:
\begin{cond}\label{cond:stabil_cond}
 For every $k \in K$, there exists $i_k \in \bZ_+$ such that $\mathcal{G}_j: \mathcal{A}^k \rightarrow \mathcal{C}^k_j$ is an equivalence of categories for any $j \geq i_k$.
\end{cond}

In this setting, the following proposition holds:
\begin{proposition}\label{prop:filt_inv_limit_cat_equiv}
 The functor $\mathcal{G}: \mathcal{A} \rightarrow \varprojlim_{i \in \bZ_+, K-filtr} \mathcal{C}_i$ is an equivalence of ($K$-filtered) categories.
\end{proposition}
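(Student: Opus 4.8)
The plan is to deduce the statement from the criterion recorded in the remark following Definition~\ref{def:filtered_categories}: a $K$-filtered functor between $K$-filtered categories which restricts to an equivalence on each filtration component is automatically an equivalence of $K$-filtered categories. Since the preceding proposition guarantees that $\mathcal{G}$ is $K$-filtered, it therefore suffices to prove that for every $k \in K$ the restriction $\mathcal{G}\rvert_{\mathcal{A}^k}\colon \mathcal{A}^k \to Fil_k\left(\varprojlim_{i \in \bZ_+, K-filtr}\mathcal{C}_i\right)$ is an equivalence. Using the identification $Fil_k\left(\varprojlim_{i \in \bZ_+, K-filtr}\mathcal{C}_i\right) \cong \varprojlim_{i \in \bZ_+}\mathcal{C}_i^k$ (the inverse limit of the component system $((\mathcal{C}_i^k)_i, (\mathcal{F}_{i-1,i}\rvert_{\mathcal{C}_i^k})_i)$), I may work entirely inside this component system for each fixed $k$.

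Fix $k \in K$ and let $i_k$ be as in Condition~\ref{cond:stabil_cond}, so that $\mathcal{G}_j\rvert_{\mathcal{A}^k}\colon \mathcal{A}^k \to \mathcal{C}_j^k$ is an equivalence for all $j \geq i_k$. Restricting the structural natural isomorphisms $\eta_{j-1,j}$ to $\mathcal{A}^k$ yields $\mathcal{F}_{j-1,j}\rvert_{\mathcal{C}_j^k}\circ \mathcal{G}_j\rvert_{\mathcal{A}^k} \cong \mathcal{G}_{j-1}\rvert_{\mathcal{A}^k}$. For $j \geq i_k+1$ both $\mathcal{G}_j\rvert_{\mathcal{A}^k}$ and $\mathcal{G}_{j-1}\rvert_{\mathcal{A}^k}$ are equivalences, so a two-out-of-three argument — writing $\mathcal{F}_{j-1,j}\rvert_{\mathcal{C}_j^k} \cong \mathcal{G}_{j-1}\rvert_{\mathcal{A}^k}\circ (\mathcal{G}_j\rvert_{\mathcal{A}^k})^{-1}$ for a quasi-inverse $(\mathcal{G}_j\rvert_{\mathcal{A}^k})^{-1}$ — shows that the transition functor $\mathcal{F}_{j-1,j}\rvert_{\mathcal{C}_j^k}$ is itself an equivalence for every $j \geq i_k+1$.

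Thus the component system $((\mathcal{C}_i^k)_i, (\mathcal{F}_{i-1,i}\rvert_{\mathcal{C}_i^k})_i)$ satisfies the hypothesis of Lemma~\ref{lem:inv_limit_stabilizing_cat} with $N = i_k+1$, and that lemma gives that the projection $\mathbf{Pr}_j\colon \varprojlim_{i \in \bZ_+}\mathcal{C}_i^k \to \mathcal{C}_j^k$ is an equivalence for all $j \geq i_k+1$. Finally, the defining relation $\mathbf{Pr}_j \circ \mathcal{G} \cong \mathcal{G}_j$ restricts to $\mathbf{Pr}_j \circ \mathcal{G}\rvert_{\mathcal{A}^k} \cong \mathcal{G}_j\rvert_{\mathcal{A}^k}$; choosing any $j \geq i_k+1$, both $\mathbf{Pr}_j$ and $\mathcal{G}_j\rvert_{\mathcal{A}^k}$ are equivalences, whence $\mathcal{G}\rvert_{\mathcal{A}^k} \cong (\mathbf{Pr}_j)^{-1}\circ \mathcal{G}_j\rvert_{\mathcal{A}^k}$ is an equivalence. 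As this holds for every $k$, the criterion above lets me conclude that $\mathcal{G}$ is an equivalence of $K$-filtered categories.

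The argument is essentially an assembly of results already established, so I do not expect a genuine obstacle; the one point that needs care is the middle step, where the stabilization of the functors $\mathcal{G}_j$ on a fixed filtration component must be transferred into eventual invertibility of the transition functors $\mathcal{F}_{j-1,j}$ restricted to that component, so that Lemma~\ref{lem:inv_limit_stabilizing_cat} becomes applicable.
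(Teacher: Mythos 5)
Your proposal is correct and follows essentially the same route as the paper: reduce to each filtration component via the identification $Fil_k\left(\varprojlim_{i \in \bZ_+, K-filtr}\mathcal{C}_i\right) \cong \varprojlim_{i \in \bZ_+}\mathcal{C}_i^k$, use Condition \ref{cond:stabil_cond} together with the compatibility isomorphisms to see that the restricted transition functors $\mathcal{F}_{i-1,i}\rvert_{\mathcal{C}_i^k}$ are eventually equivalences, and then invoke Lemma \ref{lem:inv_limit_stabilizing_cat}. The only difference is presentational: you make explicit the two-out-of-three step that the paper compresses into the phrase ``a commutative diagram where all arrows are equivalences.''
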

\begin{proof}
 To prove that $\mathcal{G}$ is an equivalence of ($K$-filtered) categories, we neeed to show that 
 $$\mathcal{G}: \mathcal{A}^k \rightarrow Fil_k \left(\varprojlim_{i \in \bZ_+, K-filtr} \mathcal{C}_i \right)$$ is an equivalence of categories for any $k \in K$. Recall that $$Fil_k \left(\varprojlim_{i \in \bZ_+, K-filtr} \mathcal{C}_i \right) \cong \varprojlim_{i \in \bZ_+}  \mathcal{C}^k_i$$ By Condition \ref{cond:stabil_cond}, for any $i > i_k$ we have a commutative diagram where all arrows are equivalences:
 $$ \xymatrix{&\mathcal{A}^k  \ar[r]^-{\mathcal{G}_i} \ar[d]_{\mathcal{G}_{i_k}} & \mathcal{C}^k_i \ar[dl]^{\mathcal{F}_{i-1, i}}\\
&\mathcal{C}^k_{i-1}}
$$
 By Lemma \ref{lem:inv_limit_stabilizing_cat}, we then have:  
$\mathbf{Pr}_{i}: \varprojlim_{i \in \bZ_+}  \mathcal{C}^k_i \longrightarrow \mathcal{C}^k_{i}$ is an equivalence of categories for any $i > i_k$, and thus $\mathcal{G}: \mathcal{A}^k \rightarrow Fil_k \left(\varprojlim_{i \in \bZ_+, K-filtr} \mathcal{C}_i \right)$ is an equivalence of categories.
\end{proof}

\section{Restricted inverse limit and inverse limit of categories with a \texorpdfstring{$K$}{K}-filtration}\label{sec:stable_vs_filtr}
\subsection{}
Let $((\mathcal{C}_i)_{i\in \bZ_+}, (\mathcal{F}_{i-1, i})_{i \geq 1})$ be a system of finite-length categories with $K$-filtrations and shortening $K$-filtered functors, whose the filtration components are Serre subcategories. We would like to give a sufficient condition on the $K$-filtration for the inverse limit of $K$-filtered categories to coincide with the restricted inverse limit of these categories.

Recall that since the functors $\mathcal{F}_{i-1, i}$ are shortening, we have maps $$f_{i-1, i}: Irr_*(\mathcal{C}_i) \longrightarrow Irr_*(\mathcal{C}_{i-1})$$
and we can consider the inverse limit $\varprojlim_{i \in \bZ_+} Irr_*(\mathcal{C}_i)$ of the sequence of sets $ Irr_*(\mathcal{C}_i)$ and maps $f_{i-1, i}$; we will denote by $pr_j: \varprojlim_{i \in \bZ_+} Irr_*(\mathcal{C}_i) \rightarrow Irr_*(\mathcal{C}_j)$ the projection maps.

Notice that the sets $Irr_*(\mathcal{C}_i)$ have a natural $K$-filtration, and the maps $f_{i-1, i}$ are $K$-filtered maps. 

\begin{proposition}\label{prop:filt_vs_stab_limit}
 Assume the following conditions hold:
 \begin{enumerate}
  \item There exists a $K$-filtration on the set $\varprojlim_{i \in \bZ_+} Irr_*(\mathcal{C}_i)$. That is, we require: 

 For each $L$ in $\varprojlim_{i \in \bZ_+} Irr_*(\mathcal{C}_i)$, there exists $k \in K$ so that $pr_i(L) \in Fil_k(Irr_*(\mathcal{C}_i))$ for any $i \geq 0$.

We would then say that such an object $L$ belongs in the $k$-th filtration component of $\varprojlim_{i \in \bZ_+} Irr_*(\mathcal{C}_i)$.
  \item ``Stabilization condition'': for any $k \in K$, there exists $N_k \geq 0$ such that the map $f_{i-1, i}:Fil_k(Irr_*(\mathcal{C}_i)) \rightarrow Fil_k(Irr_*(\mathcal{C}_{i-1}))$ be an injection for any $i \geq N_k$.
  
  That is, for any $k \in K$ there exists $N_k \in \bZ_+$ such that the (exact) functor $\mathcal{F}_{i-1, i}$ is faithful for any $i \geq N_k$. 
 \end{enumerate}
Then the two full subcategories $\varprojlim_{i \in \bZ_+, \text{ restr}} \mathcal{C}_i$, $\varprojlim_{i \in \bZ_+, K-filtr} \mathcal{C}_i$ of $\varprojlim_{i \in \bZ_+} \mathcal{C}_i$ coincide.
\end{proposition}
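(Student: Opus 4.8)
The plan is to prove the equality of the two full subcategories of $\varprojlim_{i \in \bZ_+} \mathcal{C}_i$ by establishing the two inclusions separately; since both are full subcategories, it suffices to compare their classes of objects. Throughout I would fix an object $C = (\{C_i\}_{i \in \bZ_+}, \{\phi_{i-1,i}\}_{i \geq 1})$ of $\varprojlim_{i \in \bZ_+} \mathcal{C}_i$ and analyze the behaviour of its Jordan-Holder data along the tower, using that by Corollary \ref{cor:param_simple_obj_stab_lim} (and the remark following it) the simple constituents of $C$ in $\mathcal{C}$ are recorded by $JH_*(C) = \varprojlim_{i \in \bZ_+} JH_*(C_i)$, with transition maps $f_{i-1,i}$.

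For the inclusion $\varprojlim_{i \in \bZ_+, K-filtr} \mathcal{C}_i \subseteq \varprojlim_{i \in \bZ_+, \text{ restr}} \mathcal{C}_i$, I would start from an object $C$ with $C_i \in \mathcal{C}_i^k$ for a single fixed $k$ and all $i$. Because $\mathcal{C}_i^k$ is a Serre subcategory, every Jordan-Holder component of $C_i$ lies in $Fil_k(Irr_*(\mathcal{C}_i))$. By the stabilization hypothesis (Condition \ref{cond:stabil_cond}-style assumption, item (2)), for $i \geq N_k$ the map $f_{i-1,i}$ is injective on $Fil_k(Irr_*(\mathcal{C}_i))$, so in particular it sends no nonzero element of $Fil_k$ to $0$; hence the exact shortening functor $\mathcal{F}_{i-1,i}$ kills none of the Jordan-Holder components of $C_i$. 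Consequently $\ell_{\mathcal{C}_{i-1}}(C_{i-1}) = \ell_{\mathcal{C}_i}(C_i)$ for all $i \geq N_k$, so the (weakly increasing) sequence $\{\ell_{\mathcal{C}_i}(C_i)\}_i$ is eventually constant, hence bounded, which is exactly the condition for $C$ to lie in the restricted inverse limit.

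For the reverse inclusion $\varprojlim_{i \in \bZ_+, \text{ restr}} \mathcal{C}_i \subseteq \varprojlim_{i \in \bZ_+, K-filtr} \mathcal{C}_i$, I would take $C$ in the restricted limit, so by Lemma \ref{lem:stab_lim_is_artinian} it has finite length in $\mathcal{C}$, with finitely many simple constituents $L^{(1)}, \dots, L^{(n)} \in \varprojlim_{i \in \bZ_+} Irr_*(\mathcal{C}_i)$. By item (1) each $L^{(m)}$ lies in some filtration component $k_m$, and since $K$ is directed I may choose $k \in K$ with $k_m \leq k$ for all $m$; then $pr_i(L^{(m)}) \in Fil_k(Irr_*(\mathcal{C}_i))$ for every $i$ and every $m$. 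The decisive point is then that every Jordan-Holder component of $C_i$ is of the form $pr_i(L^{(m)})$ for some $m$, i.e. that the projection $\varprojlim_j JH_*(C_j) \to JH_*(C_i)$ is surjective onto $JH(C_i)$. Granting this, all Jordan-Holder components of $C_i$ lie in $Fil_k(Irr_*(\mathcal{C}_i))$, that is, are simple objects of the Serre subcategory $\mathcal{C}_i^k$, so $C_i \in \mathcal{C}_i^k$; as this holds for all $i$ with a single $k$, we conclude $C \in \varprojlim_{i \in \bZ_+, K-filtr} \mathcal{C}_i$.

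The main obstacle is precisely the surjectivity claim in the previous paragraph: that each Jordan-Holder component of a given $C_i$ extends to a compatible system in $\varprojlim_j JH_*(C_j)$. I would prove it by first noting that exactness of $\mathcal{F}_{i-1,i}$ makes each transition map $f_{i-1,i}: JH(C_i) \to JH(C_{i-1})$ a surjection of finite multisets (apply $\mathcal{F}_{i-1,i}$ to a Jordan-Holder filtration of $C_i$ and discard the zero subquotients). Fixing $x \in JH(C_i)$, the preimages $f_{i,j}^{-1}(x) \subseteq JH(C_j)$ for $j \geq i$ (where $f_{i,j}$ denotes the downward composite) form an inverse system of nonempty finite sets with surjective transition maps, so by a compactness argument (K\"onig's lemma) its inverse limit is nonempty; combining a chosen element of this limit with the images $f_{j,i}(x)$ for $j < i$ produces an element of $\varprojlim_j JH_*(C_j)$ projecting to $x$. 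Care is needed only in tracking the pointed-set basepoint $0$ (a component may die as one descends the tower, giving $0$ in low degrees), but this does not affect the argument, since the lifted system is nonzero in degree $i$ and hence a genuine element of $\varprojlim_{i \in \bZ_+} Irr_*(\mathcal{C}_i) \setminus \{0\}$.
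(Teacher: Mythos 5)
Your proof is correct and follows essentially the same route as the paper: one inclusion from condition (1) via the identification $JH_*(C)=\varprojlim_i JH_*(C_i)$ and directedness of $K$, the other from condition (2) via injectivity of $f_{i-1,i}$ on $Fil_k(Irr_*(\mathcal{C}_i))$ forcing the lengths $\ell_{\mathcal{C}_i}(C_i)$ to stabilize. The only real difference is that you justify the surjectivity of $\varprojlim_j JH_*(C_j)\rightarrow JH_*(C_i)$ by an inverse-limit-of-finite-nonempty-sets argument, whereas the paper gets it directly from Corollary \ref{cor:param_simple_obj_stab_lim} (equivalently, by applying the exact functor $\mathbf{Pr}_i$ to a Jordan--Holder filtration of $C$ and invoking Lemma \ref{lem:simple_obj_stab_lim}); both are valid, yours being slightly more roundabout but self-contained.
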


%
%

%
\begin{proof}
Let $C: =(\{C_j\}_{j \in \bZ_+}, \{\phi_{j-1, j}\}_{j \geq 1})$ be an object in $\varprojlim_{i \in \bZ_+, \text{ restr}} \mathcal{C}_i$. As before, we denote by $JH(C_j)$ the multiset of Jordan-Holder components of $C_j$, and let 

$JH_*(C_j) := JH(C_j) \sqcup \{0\}$.

The first condition is natural: giving a $K$-filtration on the objects of $\varprojlim_{i \in \bZ_+, \text{ restr}} \mathcal{C}_i$ is equivalent to giving a $K$-filtration on the simple objects of $\varprojlim_{i \in \bZ_+, \text{ restr}} \mathcal{C}_i$, i.e. on the set $\varprojlim_{i \in \bZ_+} Irr_*(\mathcal{C}_i)$. 

Assume $C \in \varprojlim_{i \in \bZ_+, \text{ restr}} \mathcal{C}_i$. Let $n_0 \geq 0$ be such that $\ell_{\mathcal{C}_j}(C_j)$ is constant for $j \geq n_0$. 
Recall that we have (Corollary \ref{cor:param_simple_obj_stab_lim}): $$JH_*(C) = \varprojlim_{ i \in \bZ_+} JH_*(C_j)$$

Choose $k$ such that all the elements of $JH_*(C)$ lie in the $k$-th filtration component of $\varprojlim_{i \in \bZ_+} Irr_*(\mathcal{C}_i)$. This is possible due to the first condition.

Then for any $L_j \in JH(C_j)$, we have: $L_j = pr_j(L)$ for some $L \in JH_*(C)$, and thus $L_j \in Fil_k(Irr_*(\mathcal{C}_j))$. We conclude that $C \in Fil_k(\varprojlim_{i \in \bZ_+, K-filtr} \mathcal{C}_i)$.

Thus we proved that the first condition of the Theorem holds iff $\varprojlim_{i \in \bZ_+, \text{ restr}} \mathcal{C}_i$ is a full subcategory of $\varprojlim_{i \in \bZ_+, K-filtr} \mathcal{C}_i$.

\mbox{}

Now, let $C \in \varprojlim_{i \in \bZ_+, K-filtr} \mathcal{C}_i$, and let $k \in K$ be such that $C \in Fil_k(\varprojlim_{i \in \bZ_+, K-filtr} \mathcal{C}_i)$. We would like to show that $\ell_{\mathcal{C}_i}(C_i)$ is constant starting from some $i$.

Indeed, the second condition of the Theorem tells us that there exists $N_k \geq 0$ such that the map $$f_{i-1, i}: Fil_k(Irr_*(\mathcal{C}_i)) \rightarrow Fil_k(Irr_*(\mathcal{C}_{i-1}))$$ is an injection for any $i \geq N_k$. 

We claim that for $i \geq N_k$, $\ell_{\mathcal{C}_i}(C_i)$ is constant. Indeed, if it weren't, then there would be some $i \geq N_k +1 $ and some $L_i \in JH(C_i)$ such that $ f_{i-1, i}(L_i) = 0$. But this is impossible, due to the requirement above.


\end{proof}

%
%

\section{\texorpdfstring{$\gl_{\infty}$}{Infinite Lie algebra gl} and the restricted inverse limit of representations of \texorpdfstring{$\gl_n$}{finite-dimensional Lie algebras gl}}
In this section, we give a nice example of a restricted inverse limit of categories; namely, we will show that the category of polynomial representations of the Lie algebra $\gl_{\infty}$ is a restricted inverse limit of the categories of polynomial representations of $\gl_n$ for $n \geq 0$. 

The representations of the Lie algebra $\gl_{\infty}$ (or the group $GL_{\infty}$) are discussed in detail in \cite{PS}, \cite{DPS}, as well as \cite[Section 3]{SS}.

\subsection{The \texorpdfstring{Lie algebra $\gl_{\infty}$}{infinite Lie algebra gl}}\label{ssec:rep_gl_infty}
Let $\bC^{\infty}$ be a complex vector space with a countable basis $\{ e_1, e_2, e_3, ... \}$.

Consider the Lie algebra $\gl_{\infty}$ of infinite matrices $A=(a_{ij})_{i, j \geq 1}$ with finitely many non-zero entries. We have a natural action of $\gl_{\infty}$ on $\bC^{\infty}$, with $\gl_{\infty} \cong \bC^{\infty} \otimes \bC^{\infty}_{*}$. Here $\bC^{\infty}_{*} = span_{\bC}(e_1^*, e_2^*, e_3^*, ...)$, where $e_i^*$ is the linear functional dual to $e_i$: $e_i^*(e_j) = \delta_{ij}$.

\mbox{}

We now insert more notation. Let $N \in \bZ_+ \cup \{ \infty \}$, and let $m \geq 1$.
We will consider the Lie subalgebra $\gl_m \subset \gl_{N}$ consisting of matrices $A=(a_{ij})_{1 \leq i, j \leq N}$ for which $a_{ij} =0$ whenever $i>m$ or $j>m$. We will also denote by $\gl_m^{\perp}$ the Lie subalgebra of $\gl_{N}$ consisting of matrices $A=(a_{ij})_{1 \leq i, j \leq N}$ for which $a_{ij} =0$ whenever $i\leq m$ or $j \leq m$. 
\begin{remark}
 Note that $\gl_n^{\perp} \cong \gl_{N-m}$ for any $N, m$.
\end{remark}

\subsection{Categories of polynomial representations}
\mbox{}

In this subsection, $N \in \bZ_+ \cup \{\infty \}$.

We will consider the symmetric monoidal category $Rep(\gl_{N})_{poly}$ of polynomial representations of $\gl_{N}$. 

As a tensor category, it is generated by the tautological representation $\bC^N$ of $\gl_N$. Namely, this is the category of $\gl_{N}$-modules which are direct summands in finite direct sums of tensor powers of $\bC^N$, and $\gl_{N}$-equivariant morphisms between them.

This category is discussed in detail in \cite[Section 2.2]{SS}. 

It is easy to see that this is a semisimple abelian category, whose simple objects are parametrized (up to isomorphism) by all Young diagrams of arbitrary sizes: the simple object corresponding to $\lambda$ is $L^N_{\lam} = S^{\lambda} \bC^{N}$.

\begin{remark}
Note that $Rep(\gl_{\infty})_{poly}$ is the free abelian symmetric monoidal category generated by one object (c.f. \cite[(2.2.11)]{SS}). It has a equivalent definition as the category of polynomial functors of bounded degree, which can be found in \cite{HY}, \cite[Chapter I]{Mac}, \cite{SS}.
\end{remark}

\begin{remark}
 For $N \in \bZ_+$, one can describe these representations as finite-dimensional representations $\rho: GL_N \rightarrow \Aut(W)$ which can be extended to an algebraic map $\End(GL_N) \rightarrow \End(W)$.
\end{remark}

\subsection{Specialization functors}\label{ssec:spec_funct_alg}


We now define specialization functors from the category of representations of $\gl_{\infty}$ to the categories of representations of $\gl_n$ (c.f. \cite[Section 3]{SS}):
\begin{definition}\label{def:Gamma_func}
 $$\Gamma_n: Rep(\gl_{\infty})_{poly} \rightarrow Rep(\gl_{n})_{poly}, \; \Gamma_n := (\cdot)^{\gl_{n}^{\perp}}$$
\end{definition}

\begin{lemma}\label{lem:Gamma_well_def}
 The functor $\Gamma_n$ is well-defined. 
\end{lemma}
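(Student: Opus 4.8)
The plan is to verify the two things that ``well-defined'' requires: that for every $V \in Rep(\gl_{\infty})_{poly}$ the invariant subspace $V^{\gl_n^{\perp}}$ carries a natural $\gl_n$-action, and that this $\gl_n$-module is again polynomial. Functoriality on morphisms is then automatic: any $\gl_{\infty}$-equivariant map $f\colon V \to W$ is in particular $\gl_n^{\perp}$-equivariant, so $f(V^{\gl_n^{\perp}}) \subseteq W^{\gl_n^{\perp}}$, and this restriction is $\gl_n$-equivariant.

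First I would record the elementary fact that $\gl_n$ and $\gl_n^{\perp}$ commute inside $\gl_{\infty}$: a matrix supported on the upper-left $n \times n$ block and one supported on the complementary block multiply to zero in both orders, so their bracket vanishes. Consequently, for any $\gl_{\infty}$-module $V$ the subspace $V^{\gl_n^{\perp}}$ is $\gl_n$-stable: if $Y \in \gl_n^{\perp}$, $X \in \gl_n$ and $v \in V^{\gl_n^{\perp}}$, then $Y(Xv) = X(Yv) + [Y,X]v = 0$. This equips $\Gamma_n(V)$ with a $\gl_n$-module structure.

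The substance is to show $V^{\gl_n^{\perp}}$ is polynomial. Write $\bC^{\infty} = \bC^n \oplus U$ with $U = \mathrm{span}(e_{n+1}, e_{n+2}, \dots)$, and note that $\gl_n^{\perp}$ acts trivially on $\bC^n$ and, via $\gl_n^{\perp} \cong \gl(U)$, tautologically on $U$. The key computation is for tensor powers: expanding $(\bC^{\infty})^{\otimes d} = (\bC^n \oplus U)^{\otimes d}$ into $2^d$ summands according to which factors lie in $\bC^n$ versus $U$, I would show that a summand contributes to the $\gl_n^{\perp}$-invariants only when no factor lies in $U$. This rests on the vanishing $(U^{\otimes k})^{\gl(U)} = 0$ for $k \geq 1$: the diagonal elements $E_{ii} \in \gl(U)$ act on $U^{\otimes k}$ with non-negative eigenvalues whose sum acts as multiplication by $k$, so any invariant vector $v$ satisfies $k \cdot v = 0$ and hence $v = 0$. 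Therefore $\bigl((\bC^{\infty})^{\otimes d}\bigr)^{\gl_n^{\perp}} = (\bC^n)^{\otimes d}$ as $\gl_n$-modules. To pass to a general object, recall that every $V \in Rep(\gl_{\infty})_{poly}$ is the image of a $\gl_{\infty}$-equivariant idempotent $e$ on some $M = \bigoplus_d \bigl((\bC^{\infty})^{\otimes d}\bigr)^{\oplus m_d}$; since $e$ is $\gl_{\infty}$-equivariant it preserves $M^{\gl_n^{\perp}}$ and restricts there to a $\gl_n$-equivariant idempotent, whence $V^{\gl_n^{\perp}} = e\bigl(M^{\gl_n^{\perp}}\bigr)$ is a direct summand of $M^{\gl_n^{\perp}} = \bigoplus_d \bigl((\bC^n)^{\otimes d}\bigr)^{\oplus m_d}$, i.e. a polynomial representation of $\gl_n$. (Equivalently, since $Rep(\gl_{\infty})_{poly}$ is semisimple with simple objects $L^{\infty}_{\lambda} = S^{\lambda}\bC^{\infty}$ and $(\cdot)^{\gl_n^{\perp}}$ is additive and left exact, it suffices to check on simples, where the same argument yields $(S^{\lambda}\bC^{\infty})^{\gl_n^{\perp}} \cong S^{\lambda}\bC^n = L^n_{\lambda}$.)

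I expect the main obstacle to be the tensor-power computation, and within it the invariant-vanishing $(U^{\otimes k})^{\gl(U)} = 0$ for $k \geq 1$; the commuting of the two subalgebras and the passage from tensor powers to their direct summands are formal consequences of $\gl_{\infty}$-equivariance once that degree/weight argument is in place.
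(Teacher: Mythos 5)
Your proof is correct and takes essentially the same route as the paper's: observe that $\gl_n$ and $\gl_n^{\perp}$ commute so the invariants carry a $\gl_n$-action, reduce to tensor powers of $\bC^{\infty}$ via direct summands, and compute $\bigl((\bC^{\infty})^{\otimes r}\bigr)^{\gl_n^{\perp}} = (\bC^n)^{\otimes r}$. The paper simply checks the last step directly on basis elements, whereas you package the same computation as the vanishing $(U^{\otimes k})^{\gl(U)}=0$ via a weight argument; this is only a difference in presentation.
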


\begin{proof}
 First of all, notice that the subalgebras $\gl_{n}, \gl_{n}^{\perp} \subset \gl_{\infty}$ commute, and therefore the subspace of $\gl_{n}^{\perp}$-invariants of a $\gl_{\infty}$-module automatically carries an action of $\gl_{n}$. 
 
 We need to check that given a polynomial $\gl_{\infty}$-representation $M$ of $\gl_{n}$, the $\gl_{n}^{\perp}$-invariants of $M$ form a polynomial respresentation of $\gl_{n}$. It is enough to check that this is true when $M = (\bC^{\infty})^{\otimes r}$.
 
 The latter statement is checked explicitly on basis elements of the form $e_{i_1} \otimes e_{i_2} \otimes ... \otimes e_{i_r} $.
 The subspace of $\gl_{n}^{\perp}$-invariants is spanned by the basis elements $e_{i_1} \otimes e_{i_2} \otimes ... \otimes e_{i_r} $ for which $i_1, ..., i_r \leq  n$. Thus the $\gl_{n}^{\perp}$-invariants of $(\bC^{\infty})^{\otimes r} $ form the $\gl_{n}$-representation $(\bC^{n})^{\otimes r}$. 
 
\end{proof}
In particular, one proves in the same way that the $\gl_{n}^{\perp}$-invariants of $(\bC^{\infty})^{\otimes r} $ form the $\gl_{n}$-representation $(\bC^{n})^{\otimes r} $.
 
The following Lemmas are proved in \cite{PS}, \cite[Section 3]{SS}:
 \begin{lemma}\label{lem:Gamma_is_tensor}
 The functors $\Gamma_n$ are symmetric monoidal functors.
\end{lemma}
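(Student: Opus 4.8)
The plan is to exhibit $\Gamma_n$ as a strong symmetric monoidal functor by equipping it with the canonical structure maps coming from inclusion of invariants, checking that these are isomorphisms (this is the only real content), and observing that all coherence axioms hold automatically.

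First, for any $\gl_\infty$-modules $M,N$ and any $x \in \gl_n^{\perp}$ with invariant $m \in M^{\gl_n^{\perp}}$, $p \in N^{\gl_n^{\perp}}$, one has $x\cdot(m\otimes p)=xm\otimes p+m\otimes xp=0$, so $\Gamma_n(M)\otimes\Gamma_n(N)$ sits inside $(M\otimes N)^{\gl_n^{\perp}}=\Gamma_n(M\otimes N)$. This gives a natural inclusion $J_{M,N}\colon \Gamma_n(M)\otimes\Gamma_n(N)\hookrightarrow \Gamma_n(M\otimes N)$, together with the identification $\Gamma_n(\triv)=\triv$, since the (one-dimensional) trivial representation is fixed by all of $\gl_n^{\perp}$. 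I would then note that the associativity, unit, and symmetry constraints are preserved automatically: each structure map is the restriction of the corresponding canonical map on the ambient tensor product, so the relevant diagrams already commute inside $M\otimes N\otimes P$ (respectively $M\otimes N$). In particular the two ways of building $\Gamma_n(M)\otimes\Gamma_n(N)\otimes\Gamma_n(P)\to\Gamma_n(M\otimes N\otimes P)$ both equal the inclusion $\iota$ of the triple tensor of invariants, and the flip square commutes because both composites send $m\otimes p\mapsto p\otimes m$. Thus the substantive point is that each $J_{M,N}$ is an isomorphism, i.e. that taking $\gl_n^{\perp}$-invariants is \emph{strong}, not merely lax, monoidal.

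The main work is therefore surjectivity of $J_{M,N}$, which I would first establish on tensor powers of $\bC^\infty$. By the computation in the proof of Lemma \ref{lem:Gamma_well_def}, $\Gamma_n((\bC^\infty)^{\otimes r})$ is spanned by the basis vectors $e_{i_1}\otimes\cdots\otimes e_{i_r}$ with all $i_j\leq n$, and equals $(\bC^n)^{\otimes r}$. Comparing these spanning sets, the inclusion $J_{(\bC^\infty)^{\otimes r},\,(\bC^\infty)^{\otimes s}}$ identifies $(\bC^n)^{\otimes r}\otimes(\bC^n)^{\otimes s}$ with $(\bC^n)^{\otimes(r+s)}=\Gamma_n((\bC^\infty)^{\otimes(r+s)})$ on the nose, hence is an isomorphism.

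Finally, I would extend this to all of $Rep(\gl_\infty)_{poly}$. Every polynomial representation is a direct summand of a finite direct sum $A$ of tensor powers $(\bC^\infty)^{\otimes r}$; writing the projections onto $M\subseteq A$ and $N\subseteq B$ as idempotents $e_M,e_N$, naturality of $J$ shows that $J_{A,B}$ commutes with $\Gamma_n(e_M)\otimes\Gamma_n(e_N)$ on the source and $\Gamma_n(e_M\otimes e_N)$ on the target. Since $J_{A,B}$ is an isomorphism by the previous step, it restricts to an isomorphism between the images of these idempotents, which are exactly $\Gamma_n(M)\otimes\Gamma_n(N)$ and $\Gamma_n(M\otimes N)$; hence $J_{M,N}$ is an isomorphism. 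The main obstacle is precisely this strong-monoidality: for a general Lie algebra invariants are only lax monoidal, and it is the explicit basis description of $\gl_n^{\perp}$-invariants on tensor powers that forces the inclusion to be an equality here.
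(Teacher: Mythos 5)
Your proof is correct. Note, however, that the paper does not actually prove this lemma: it simply cites \cite{PS} and \cite[Section 3]{SS}, so there is no internal argument to compare against. Your route --- equip $(\cdot)^{\gl_n^{\perp}}$ with its canonical lax monoidal structure $J_{M,N}\colon \Gamma_n(M)\otimes\Gamma_n(N)\hookrightarrow\Gamma_n(M\otimes N)$, verify strongness on the tensor generators $(\bC^\infty)^{\otimes r}$ via the explicit basis description of the invariants from the proof of Lemma \ref{lem:Gamma_well_def}, and then propagate to all of $Rep(\gl_\infty)_{poly}$ by naturality and idempotent splitting --- is the standard argument and correctly isolates the one nontrivial point, namely that the lax structure map is an isomorphism. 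Two small points you glide over: (i) between ``tensor powers'' and ``direct summands of finite direct sums of tensor powers'' you need the (obvious) additivity of $\Gamma_n$ and of $J$ to know $J_{A,B}$ is an isomorphism for $A,B$ finite direct sums of tensor powers, and (ii) one should remark that $J_{M,N}$ is $\gl_n$-equivariant, i.e.\ a morphism in $Rep(\gl_n)_{poly}$; this is immediate since $\gl_n$ and $\gl_n^{\perp}$ commute and the $\gl_n$-action on both sides is the restriction of the diagonal action on $M\otimes N$. Neither affects the validity of the argument. An alternative, slightly slicker route available inside this paper is to use semisimplicity: $\Gamma_n$ is an additive functor between semisimple categories, so it suffices to check that $J_{L^\infty_\lambda,L^\infty_\mu}$ is an isomorphism on simple objects, which reduces to the equality of Littlewood--Richardson decompositions of $S^\lambda\bC^\infty\otimes S^\mu\bC^\infty$ and $S^\lambda\bC^n\otimes S^\mu\bC^n$ up to the truncation $\ell(\nu)\le n$ --- but that requires Lemma \ref{lem:Gamma_simples}, which the paper derives \emph{from} the present lemma, so your direct computation avoids a circularity.
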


The functors $\Gamma_n: Rep(\gl_{\infty})_{poly} \rightarrow Rep(\gl_{n})_{poly}$ are additive functors between semisimple categories, and their effect on simple objects is given by the following Lemma (a direct consequence of Lemma \ref{lem:Gamma_is_tensor}):
  \begin{lemma}\label{lem:Gamma_simples}
 For any Young diagram $\lam$, $\Gamma_n(S^{\lam} \bC^{\infty}) \cong S^{\lam} \bC^n$.
\end{lemma}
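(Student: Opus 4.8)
The plan is to exploit the fact that the Schur functor $S^{\lambda}$ is defined purely in terms of the symmetric monoidal structure, so that any $\bC$-linear symmetric monoidal functor between Karoubian categories automatically commutes with it. Concretely, set $r = \abs{\lambda}$ and let $c_{\lambda} \in \bC[S_r]$ be the Young symmetrizer attached to $\lambda$. In any $\bC$-linear Karoubian symmetric monoidal category, the object $X^{\otimes r}$ carries an action of $S_r$ induced by the symmetry isomorphisms, the element $c_{\lambda}$ acts as an idempotent endomorphism of $X^{\otimes r}$, and by definition $S^{\lambda} X = \Im(c_{\lambda} : X^{\otimes r} \to X^{\otimes r})$. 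The categories $Rep(\gl_{\infty})_{poly}$ and $Rep(\gl_n)_{poly}$ are semisimple, hence Karoubian, so this description applies to both.

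First I would record that, since $\Gamma_n$ is symmetric monoidal (Lemma \ref{lem:Gamma_is_tensor}), the coherence isomorphism $\Gamma_n(X^{\otimes r}) \xrightarrow{\sim} \Gamma_n(X)^{\otimes r}$ intertwines the two $S_r$-actions: the symmetry of $Rep(\gl_{\infty})_{poly}$ is carried to the symmetry of $Rep(\gl_n)_{poly}$ precisely because $\Gamma_n$ respects the braiding. Consequently $\Gamma_n$ transports the idempotent $c_{\lambda}$ on $(\bC^{\infty})^{\otimes r}$ to the idempotent $c_{\lambda}$ on $\Gamma_n(\bC^{\infty})^{\otimes r}$. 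Next, since $\Gamma_n$ is additive (indeed exact) and the target is Karoubian, it sends the image of an idempotent to the image of the transported idempotent; applying this to $S^{\lambda}\bC^{\infty} = \Im(c_{\lambda})$ gives
\[
\Gamma_n(S^{\lambda}\bC^{\infty}) \cong \Im\bigl(c_{\lambda} : \Gamma_n(\bC^{\infty})^{\otimes r} \to \Gamma_n(\bC^{\infty})^{\otimes r}\bigr).
\]
Finally I would invoke the computation underlying Lemma \ref{lem:Gamma_well_def}, namely the $S_r$-equivariant identification $\Gamma_n((\bC^{\infty})^{\otimes r}) \cong (\bC^n)^{\otimes r}$ (the spanning basis vectors $e_{i_1} \otimes \cdots \otimes e_{i_r}$ with all $i_s \leq n$ are permuted by $S_r$ exactly as in $(\bC^n)^{\otimes r}$), under which the right-hand side becomes $\Im(c_{\lambda} : (\bC^n)^{\otimes r} \to (\bC^n)^{\otimes r}) = S^{\lambda}\bC^n$, yielding the claim.

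The main thing to be careful about is the compatibility of the monoidal coherence isomorphism with the $S_r$-actions; this is exactly the content of ``$\Gamma_n$ is symmetric, not merely monoidal,'' and it is what guarantees that $\Gamma_n$ carries $c_{\lambda}$ to $c_{\lambda}$ rather than to some conjugate idempotent. A pleasant feature of this argument is that it requires no case analysis: when $\lambda$ has more than $n$ rows the idempotent $c_{\lambda}$ already annihilates $(\bC^n)^{\otimes r}$, so the formula automatically returns $S^{\lambda}\bC^n = 0 = \Gamma_n(S^{\lambda}\bC^{\infty})$, in agreement with the fact that $Rep(\gl_n)_{poly}$ has simple objects only for partitions with at most $n$ parts.
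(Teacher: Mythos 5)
Your argument is correct and is exactly the route the paper takes: the paper states the lemma as ``a direct consequence of Lemma \ref{lem:Gamma_is_tensor}'' (that $\Gamma_n$ is symmetric monoidal), and your proof simply unpacks why a symmetric monoidal additive functor between Karoubian categories commutes with Schur functors, combined with the identification $\Gamma_n((\bC^{\infty})^{\otimes r}) \cong (\bC^n)^{\otimes r}$ established after Lemma \ref{lem:Gamma_well_def}. The only nitpick is that the Young symmetrizer $c_{\lambda}$ is a quasi-idempotent ($c_{\lambda}^2$ is a nonzero scalar multiple of $c_{\lambda}$) and must be normalized before speaking of its image as a direct summand, but this does not affect the argument.
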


\subsection{Restriction functors}\label{ssec:res_funct_poly}

\begin{definition}\label{def:res_funct_poly_repr}
Let $n \geq 1$. We define the functor $$\InnaD{\mathfrak{Res}}_{n-1, n}: Rep(\gl_n)_{poly} \rightarrow Rep(\gl_{n-1})_{poly}, \; \InnaD{\mathfrak{Res}}_{n-1, n} := (\cdot)^{\gl_{n-1}^{\perp}}$$
\end{definition} 
%
 The proof that this functor is well-defined is exactly the same as that of Lemma \ref{lem:Gamma_well_def}.

\begin{remark}
 Here is an alternative definition of the functors $ \InnaD{\mathfrak{Res}}_{n-1, n}$.
 
 We say that a $\gl_n$-module $M$ is of {\it degree} $d$ if $\id_{\bC^n} \in \gl_n$ acts by $d \id_M$ on $M$. Also, given any $\gl_n$-module $M$, we may consider the maximal submodule of $M$ of degree $d$, and denote it by $deg_d(M)$. This defines an endo-functor $deg_{d}$ of $Rep(\gl_n)_{poly}$.
 
 Note that a simple module $S^{\lam} \bC^n$ is of degree $\abs{\lambda}$.
 
 \mbox{}
 
 The notion of degree gives a decomposition
 $$Rep(\gl_n)_{poly} \cong \bigoplus_{d \in \bZ_+} Rep(\gl_{n})_{poly, d}$$ where $ Rep(\gl_{n})_{poly, d}$ is the full subcategory of $Rep(\gl_n)_{poly}$ consisting of all polynomial $\gl_n$-modules of degree $d$.

Then
$$ \InnaD{\mathfrak{Res}}_{n-1, n} = \oplus_{d \in \bZ_+} \InnaD{\mathfrak{Res}}_{d, n-1, n}: Rep(\gl_n)_{poly} \rightarrow Rep(\gl_{n-1})_{poly} $$ where
$$\InnaD{\mathfrak{Res}}_{d, n-1, n}: Rep(\gl_{n})_{poly, d} \rightarrow Rep(\gl_{n-1})_{poly, d} , \, \InnaD{\mathfrak{Res}}_{d, n-1, n}:= deg_{d} \circ \InnaD{\mathrm{Res}}_{\gl_{n-1}}^{\gl_n}$$
where $\InnaD{\mathrm{Res}}_{\gl_{n-1}}^{\gl_n}$ is the usual restriction functor for the pair $\gl_{n-1} \subset \gl_n$.
\end{remark}

Again, $\InnaD{\mathfrak{Res}}_{n-1, n}$ are additive functors between semisimple categories, so we are interested in checking the effect of these functors on simple modules:
\begin{lemma}\label{lem:res_func_simples}
 $\InnaD{\mathfrak{Res}}_{n-1, n}(S^{\lambda} \bC^n) \cong S^{\lambda} \bC^{n-1}$ for any Young diagram $\lam$.
\end{lemma}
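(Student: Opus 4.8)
The plan is to reduce the computation to tensor powers and then transport the Schur-functor construction of $S^{\lam}\bC^n$ through $\mathfrak{Res}_{n-1, n}$. Since both $Rep(\gl_n)_{poly}$ and $Rep(\gl_{n-1})_{poly}$ are semisimple and $\mathfrak{Res}_{n-1, n}$ is additive, it is enough to compute the image of each simple object $S^{\lam}\bC^n$. Observe first that $\gl_{n-1}^{\perp} \subset \gl_n$ is exactly the one-dimensional span of $E_{nn}$, so $\mathfrak{Res}_{n-1, n}(M) = M^{E_{nn}}$ is the $E_{nn}$-weight-zero subspace; because $E_{nn}$ acts semisimply with integer eigenvalues on every polynomial module, this functor is exact (it is projection onto a single weight space).

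First I would compute the functor on tensor powers. By the same argument as in Lemma \ref{lem:Gamma_well_def}, evaluated on basis vectors $e_{i_1} \otimes \cdots \otimes e_{i_r}$ (on which $E_{nn}$ acts by the number of indices $i_k$ equal to $n$), one gets $\mathfrak{Res}_{n-1, n}((\bC^n)^{\otimes r}) \cong (\bC^{n-1})^{\otimes r}$, spanned by those basis vectors all of whose indices are $\leq n-1$. The key point is that this identification is equivariant for the $S_r$-action permuting tensor factors: that action is implemented by morphisms in $Rep(\gl_n)_{poly}$, and $\mathfrak{Res}_{n-1, n}$, being a functor, carries the symmetrizer idempotents on $(\bC^n)^{\otimes r}$ to the corresponding idempotents on $(\bC^{n-1})^{\otimes r}$.

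Next I would realize $S^{\lam}\bC^n$, with $r = \abs{\lam}$, as the image of a Young symmetrizer (idempotent) $c_{\lam} \in \bC[S_r]$ acting on $(\bC^n)^{\otimes r}$, via Schur--Weyl duality. Since $\mathfrak{Res}_{n-1, n}$ is additive, exact, and $S_r$-equivariant on tensor powers, it sends the image of $c_{\lam}$ on $(\bC^n)^{\otimes r}$ to the image of $c_{\lam}$ on $(\bC^{n-1})^{\otimes r}$, which is precisely $S^{\lam}\bC^{n-1}$. This treatment is uniform and automatically covers the degenerate case $\ell(\lam) = n$, where the right-hand side is zero (no semistandard filling of $\lam$ uses only the entries $1, \dots, n-1$).

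The main obstacle is the bookkeeping in the last step: making precise that an additive, exact functor which is $S_r$-equivariantly compatible on tensor powers commutes with the Schur-functor idempotents. A cleaner route that sidesteps this is to use the compatibility $\mathfrak{Res}_{n-1, n} \circ \Gamma_n \cong \Gamma_{n-1}$: both composites are symmetric monoidal functors out of the free (Karoubian) symmetric monoidal category $Rep(\gl_{\infty})_{poly}$ sending the generator $\bC^{\infty} \mapsto \bC^{n-1}$ (the monoidality of $\mathfrak{Res}_{n-1, n}$ follows by the argument of Lemma \ref{lem:Gamma_is_tensor}), so they are isomorphic by the universal property. Evaluating on $S^{\lam}\bC^{\infty}$ and applying Lemma \ref{lem:Gamma_simples} twice then yields $\mathfrak{Res}_{n-1, n}(S^{\lam}\bC^n) \cong \mathfrak{Res}_{n-1, n}(\Gamma_n(S^{\lam}\bC^{\infty})) \cong \Gamma_{n-1}(S^{\lam}\bC^{\infty}) \cong S^{\lam}\bC^{n-1}$, since by Lemma \ref{lem:Gamma_simples} every simple of $Rep(\gl_n)_{poly}$ is of the form $\Gamma_n(S^{\lam}\bC^{\infty})$.
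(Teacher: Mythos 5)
Your argument is correct, but it takes a genuinely different route from the paper. The paper's proof is a one-line appeal to the branching rule for the pair $\gl_{n-1}\subset\gl_n$: the restriction of $S^{\lam}\bC^n$ decomposes as the sum of $S^{\mu}\bC^{n-1}$ over all $\mu$ interlacing $\lam$, the summand $S^{\mu}\bC^{n-1}$ sits in $E_{n,n}$-weight $\abs{\lam}-\abs{\mu}$, and passing to $E_{n,n}$-invariants therefore selects exactly $\mu=\lam$ (and gives $0$ when $\ell(\lam)=n$, since no such $\mu$ exists). Your first argument --- computing $\mathfrak{Res}_{n-1,n}$ on $(\bC^n)^{\otimes r}$ $S_r$-equivariantly and transporting the Young symmetrizer --- is more self-contained: it needs no branching combinatorics, only linearity of the functor and the fact that additive functors preserve images of split idempotents, and it is the argument one would want if the branching rule were not available. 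Your second argument is slicker but requires care about logical order: in the paper, both the compatibility $\Gamma_{n-1}\cong\mathfrak{Res}_{n-1,n}\circ\Gamma_n$ (Lemma \ref{lem:Gamma_res_compat}) and the monoidality of $\mathfrak{Res}_{n-1,n}$ (Lemma \ref{lem:res_func_tensor}) are \emph{deduced from} the present lemma, so quoting them as stated would be circular. You do avoid this, by proving monoidality of $\mathfrak{Res}_{n-1,n}$ directly (the weight-zero subspace of a tensor product of polynomial modules is the tensor product of the weight-zero subspaces, since all $E_{n,n}$-weights are nonnegative) and then invoking the universal property of the free Karoubian symmetric monoidal category on one generator; but this dependence on freeness of $Rep(\gl_{\infty})_{poly}$ is a heavier input than either the branching rule or your first computation, so I would lead with the Young-symmetrizer argument and keep the universal-property argument as a remark.
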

\begin{proof}
 This is a simple corollary of the branching rues for $\gl_n, \gl_{n-1}$. 
\end{proof}

Next, we notice that these functors are compatible with the functors $\Gamma_n$ defined before. 
\begin{lemma}\label{lem:Gamma_res_compat}
 For any $n \geq 1$, we have a commutative diagram:
 $$\xymatrix{&Rep(\gl_{\infty})_{poly}  \ar[r]^{\Gamma_n} \ar[rd]_{\Gamma_{n-1}} &Rep(\gl_n)_{poly} \ar[d]^{\InnaD{\mathfrak{Res}}_{n-1, n}} \\ &{} &Rep(\gl_{n-1})_{poly} }$$
 That is, there is a natural isomorphism $\Gamma_{n-1}  \cong \InnaD{\mathfrak{Res}}_{n-1, n} \circ \Gamma_n $.
\end{lemma}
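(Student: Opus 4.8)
The plan is to work directly with the invariants-theoretic definitions of the two functors and to realize the natural isomorphism as the identity map on a common invariant subspace. First I would record that, viewed inside $\gl_n$, the subalgebra $\gl_{n-1}^\perp$ consists of the matrices supported on the single index $n$, so that $\gl_{n-1}^\perp \cap \gl_n = \bC E_{nn}$ and hence $\mathfrak{Res}_{n-1,n}(V) = V^{E_{nn}}$ for every $V \in Rep(\gl_n)_{poly}$. Consequently, for a polynomial $\gl_\infty$-module $M$ both functors under comparison produce a subspace of $M$: on one side $\Gamma_{n-1}(M) = M^{\gl_{n-1}^\perp}$, and on the other $\mathfrak{Res}_{n-1,n}(\Gamma_n(M)) = (M^{\gl_n^\perp})^{E_{nn}}$. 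The heart of the argument is the claim that these two subspaces of $M$ coincide, together with the observation that both carry the same residual $\gl_{n-1}$-action, inherited from the single embedding $\gl_{n-1} \subset \gl_\infty$ on indices $\le n-1$, which commutes with $\gl_{n-1}^\perp$ and with both $\gl_n^\perp$ and $E_{nn}$, and preserves the relevant subspaces. The inclusion $M^{\gl_{n-1}^\perp} \subseteq (M^{\gl_n^\perp})^{E_{nn}}$ is immediate, since $\gl_n^\perp$ and $E_{nn}$ both lie in $\gl_{n-1}^\perp$.

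For the reverse inclusion I would reduce to the generating objects. Taking invariants under a Lie subalgebra is an additive functor that commutes with finite direct sums and with the splitting of idempotents; both subspaces above are subfunctors of the identity, and for a direct summand $M$ of $P$ given by a $\gl_\infty$-equivariant idempotent $e$ one has $F(M) = e(F(P)) = F(P)\cap M$ for any such subfunctor $F$. Since every object of $Rep(\gl_\infty)_{poly}$ is a direct summand of a finite sum of tensor powers $(\bC^\infty)^{\otimes r}$, it therefore suffices to verify the equality for $M = (\bC^\infty)^{\otimes r}$. For these, the computation already carried out in the proof of Lemma \ref{lem:Gamma_well_def} gives, for every $m$, that $((\bC^\infty)^{\otimes r})^{\gl_m^\perp} = (\bC^m)^{\otimes r}$, spanned by the basis tensors with all indices $\le m$; the reason is that under the splitting $\bC^\infty = \bC^m \oplus W_m$, with $W_m = \mathrm{span}(e_{m+1}, e_{m+2}, \dots)$ the tautological representation of $\gl_m^\perp \cong \gl_\infty$, any tensor factor lying in $W_m$ contributes a nonzero weight for the diagonal Cartan, so the only invariants come from the summand with all factors in $\bC^m$. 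Applying this with $m=n$ and then taking $E_{nn}$-invariants of $(\bC^n)^{\otimes r}$, on which $E_{nn}$ acts by counting the tensor slots equal to $e_n$, leaves exactly $(\bC^{n-1})^{\otimes r}$; applying it directly with $m = n-1$ yields the same subspace $(\bC^{n-1})^{\otimes r}$. Hence the two subspaces agree for tensor powers, and therefore for all $M$.

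With the subspace equality in hand, naturality is automatic: for any morphism $f\colon M \to M'$, both $\Gamma_{n-1}(f)$ and $(\mathfrak{Res}_{n-1,n} \circ \Gamma_n)(f)$ are simply the restriction of $f$ to the common subspace, so the identity maps assemble into a natural isomorphism $\Gamma_{n-1} \cong \mathfrak{Res}_{n-1,n} \circ \Gamma_n$ in $Rep(\gl_{n-1})_{poly}$. I expect the only genuine obstacle to be this reverse inclusion, namely ruling out ``extra'' invariants created by the off-diagonal elements $E_{nj}, E_{in}$ (for $i,j>n$) of $\gl_{n-1}^\perp$ that are not visible from $\gl_n^\perp$ and $E_{nn}$ alone; the reduction to tensor powers together with the vanishing-of-invariants input resolves precisely this point. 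As a consistency check, on simple objects the two functors agree by Lemmas \ref{lem:Gamma_simples} and \ref{lem:res_func_simples}, since $\mathfrak{Res}_{n-1,n}(\Gamma_n(S^\lambda \bC^\infty)) \cong \mathfrak{Res}_{n-1,n}(S^\lambda \bC^n) \cong S^\lambda \bC^{n-1} \cong \Gamma_{n-1}(S^\lambda \bC^\infty)$; but this objectwise comparison does not by itself produce the natural isomorphism, which is exactly why the subspace computation is the right route.
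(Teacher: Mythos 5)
Your proof is correct, but it takes a genuinely different route from the paper's. The paper also begins with the natural transformation $\theta_M\colon M^{\gl_{n-1}^{\perp}} \hookrightarrow (M^{\gl_n^{\perp}})^{E_{nn}}$ given by inclusion of invariants, but then finishes abstractly: since all categories involved are semisimple and $\theta$ is already natural, it suffices to check that $\theta_L$ is an isomorphism on each simple object $L = S^{\lambda}\bC^{\infty}$, which follows from Lemmas \ref{lem:Gamma_simples} and \ref{lem:res_func_simples} (an injection between two copies of the simple object $S^{\lambda}\bC^{n-1}$ is an isomorphism). You instead prove that the two invariant subspaces of $M$ are literally equal, reducing by additivity and idempotent-splitting to the tensor generators $(\bC^{\infty})^{\otimes r}$ and computing both sides explicitly as $(\bC^{n-1})^{\otimes r}$. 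The paper's argument is shorter given the lemmas already in place, but it leans on Lemma \ref{lem:res_func_simples}, whose proof invokes the branching rules for $\gl_{n-1}\subset\gl_n$; your computation bypasses that input entirely, is self-contained modulo the calculation already done in Lemma \ref{lem:Gamma_well_def}, and identifies the natural isomorphism concretely as an identity of subspaces rather than an abstract isomorphism of simples. Your closing remark is also well taken: objectwise agreement on simples does not by itself produce a natural isomorphism, and both proofs are careful to construct $\theta$ first; your version simply replaces the semisimplicity step by a direct verification that $\theta$ is surjective.
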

\begin{proof}
By definition of the functors $\Gamma_{n-1},\InnaD{\mathfrak{Res}}_{n-1, n},\Gamma_n $, we have a natural transformation $\theta: \Gamma_{n-1}  \rightarrow \InnaD{\mathfrak{Res}}_{n-1, n} \circ \Gamma_n $ which is given by the injection $\theta_M: \Gamma_{n-1}(M) \hookrightarrow \left( \InnaD{\mathfrak{Res}}_{n-1, n} \circ \Gamma_n \right) (M) $ for any $M \in Rep(\gl_{\infty})_{poly}$. We would like to say that $\theta_M$ are isomorphisms.

 The categories in question are semisimple, so it is enough to check what happens to the simple objects. Lemmas \ref{lem:Gamma_simples} and \ref{lem:res_func_simples} then tell us that $\theta_{S^{\lam} \bC^{\infty}}$ is an isomorphism for any Young diagram $\lam$, and we are done.
\end{proof}

\begin{lemma}\label{lem:res_func_tensor}
 The functors $\InnaD{\mathfrak{Res}}_{n-1, n}:Rep(\gl_{n})_{poly} \rightarrow Rep(\gl_{n-1})_{poly}$ are symmetric monoidal functors.
\end{lemma}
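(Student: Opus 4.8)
The plan is to construct the monoidal constraint isomorphisms directly and then verify coherence, the crux being that taking $\gl_{n-1}^{\perp}$-invariants commutes with tensor products on polynomial representations, even though this commutation fails for invariants under a general Lie subalgebra.

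First I would unwind the definition. Inside $\gl_n$, the subalgebra $\gl_{n-1}^{\perp}$ is spanned by the single element $E_{nn}$, so $\mathfrak{Res}_{n-1,n}(M) = M^{E_{nn}}$ is the subspace on which $E_{nn}$ acts by zero. On any polynomial representation $E_{nn}$ acts semisimply with non-negative integer eigenvalues: this is clear on $\bC^n$, where $E_{nn} e_i = \delta_{in} e_n$, and hence on all tensor powers $(\bC^n)^{\otimes r}$ and their direct summands. Writing $M = \bigoplus_{d \geq 0} M_d$ for the $E_{nn}$-eigenspace decomposition, we then have $\mathfrak{Res}_{n-1,n}(M) = M_0$.

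Next I would produce the constraint. For polynomial $M, N$ the element $E_{nn}$ acts on $M \otimes N$ by $E_{nn} \otimes 1 + 1 \otimes E_{nn}$, so $(M \otimes N)_c = \bigoplus_{d+e=c} M_d \otimes N_e$. Taking $c=0$ and using $d, e \geq 0$ forces $d = e = 0$, yielding a natural isomorphism $J_{M,N}: M_0 \otimes N_0 \xrightarrow{\sim} (M \otimes N)_0$, that is, $\mathfrak{Res}_{n-1,n}(M) \otimes \mathfrak{Res}_{n-1,n}(N) \cong \mathfrak{Res}_{n-1,n}(M \otimes N)$. The inclusion $M_0 \otimes N_0 \hookrightarrow (M \otimes N)_0$ is always defined and injective; the content is surjectivity, which is exactly the non-negativity just established. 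The unit constraint is immediate: the monoidal unit is $\triv$, on which $E_{nn}$ acts by $0$, so $\mathfrak{Res}_{n-1,n}(\triv) = \triv$.

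Finally I would check coherence. Since $J_{M,N}$ is induced by the identity map on underlying vector spaces (it is the identification of $M_0 \otimes N_0$ with the summand $(M \otimes N)_0$), compatibility with the associativity and symmetry constraints reduces to the corresponding statements for the ambient tensor product in $Rep(\gl_n)_{poly}$, which hold by functoriality of the eigenspace decomposition; in particular the symmetry constraint is the flip, which sends $M_0 \otimes N_0$ to $N_0 \otimes M_0$ and agrees with the flip on $M \otimes N$ restricted to the zero eigenspace. I expect the surjectivity in the previous paragraph to be the main obstacle: invariants do not commute with tensor products for an arbitrary Lie subalgebra, and it is precisely the positivity of the $E_{nn}$-weights on polynomial representations that rescues the statement. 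An alternative route would invoke Lemma \ref{lem:Gamma_is_tensor} together with the compatibility $\Gamma_{n-1} \cong \mathfrak{Res}_{n-1,n} \circ \Gamma_n$ of Lemma \ref{lem:Gamma_res_compat} and the essential surjectivity of $\Gamma_n$, but the direct computation above is cleaner and self-contained.
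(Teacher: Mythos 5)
Your proof is correct, but it takes a genuinely different route from the paper's. The paper does not compute anything: it observes that $\Gamma_n$ is full, essentially surjective, and symmetric monoidal (Lemma \ref{lem:Gamma_is_tensor}, imported from the literature), and then transports the monoidal structure across the natural isomorphism $\Gamma_{n-1} \cong \mathfrak{Res}_{n-1,n} \circ \Gamma_n$ of Lemma \ref{lem:Gamma_res_compat} --- exactly the ``alternative route'' you mention and set aside in your last sentence. Your argument instead verifies the constraint by hand: $\gl_{n-1}^{\perp} \cap \gl_n$ is spanned by $E_{nn}$, which acts semisimply with non-negative integer eigenvalues on every polynomial representation, so $(M \otimes N)_0 = \bigoplus_{d+e=0} M_d \otimes N_e = M_0 \otimes N_0$, and coherence is automatic because the constraint is the identity on underlying vector spaces. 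This is sound, and you correctly isolate the one nontrivial point (surjectivity of $M_0 \otimes N_0 \hookrightarrow (M\otimes N)_0$, which fails for invariants under a general subalgebra). What each approach buys: yours is self-contained and does not lean on the external fact that $\Gamma_n$ is a tensor functor, making the positivity of $E_{nn}$-weights visibly the reason the lemma is true; the paper's is shorter, reuses already-established structure, and is the kind of transport argument that generalizes whenever a full, essentially surjective tensor functor covers the situation. Either is acceptable here.
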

\begin{proof}
The functor $\Gamma_n$ is full and essentially surjective, as well as a tensor functor. The natural isomorphism from Lemma \ref{lem:Gamma_res_compat} then provides a monoidal structure on the functor $\InnaD{\mathfrak{Res}}_{n-1, n}$, and we can immediately see that it is symmetric as well.
\end{proof}

\subsection{The restricted inverse limit of categories \texorpdfstring{$Rep(\gl_n)_{poly}$}{of polynomial representations}}\label{ssec:Stab_inv_lim_rep_poly}
This subsection describes the category $Rep(\gl_{\infty})_{poly}$ as a ``stable'' inverse limit of categories $Rep(\gl_n)_{poly}$. 

We now define a $\bZ_+$-filtration on $Rep(\gl_n)_{poly}$ for each $n \in \bZ_+$.
\begin{notation}
 For each $k \in \bZ_+$, let $Rep(\gl_n)_{poly, \text{ length } \leq k}$ be the full additive subcategory of $Rep(\gl_n)_{poly}$ generated by $S^{\lambda} \bC^n$ such that $\ell(\lambda) \leq k$. 
\end{notation}
Clearly the subcategories $Rep(\gl_n)_{poly, \text{ length } \leq k}$ give us a $\bZ_+$-filtration of the category $Rep(\gl_n)_{poly}$, and by Lemma \ref{lem:res_func_simples}, the functors $ \InnaD{\mathfrak{Res}}_{n-1, n}$ are $\bZ_+$-filtered functors (see Section \ref{sec:filtr_inv_limit_def}).

This allows us to consider the inverse limit $$\varprojlim_{n \in \bZ_+, \bZ_+-filtr} Rep(\gl_n)_{poly}$$ of $\bZ_+$-filtered categories $Rep(\gl_n)_{poly}$. This inverse limit is an abelian category \InnaD{with a $\bZ_+$-filtration} (by Lemma \ref{lem:filt_inv_limit_abel}).

Note that by Lemma \ref{lem:res_func_simples}, the functors $\InnaD{\mathfrak{Res}}_{n-1, n}$ are shortening functors (see Definition \ref{def:shorten_functor}); futhermore, the system $((Rep(\gl_n)_{poly})_{n \in \bZ_+}, (\InnaD{\mathfrak{Res}}_{n-1, n})_{n \geq 1})$ satisfies the conditions in Proposition \ref{prop:filt_vs_stab_limit}, and therefore the inverse limit of this $\bZ_+$-filtered system is also its restricted inverse limit (see Section \ref{sec:stab_inv_lim}).

Of course, since the functors $\InnaD{\mathfrak{Res}}_{n-1, n}$ are symmetric monoidal functors, the above restricted inverse limit is a symmetric monoidal category.

\begin{proposition}\label{prop:inv_lim_cat_poly_rep}
 We have an equivalence of symmetric monoidal abelian categories 
 $$\Gamma_{\text{lim}}: Rep(\gl_{\infty})_{poly} \longrightarrow \varprojlim_{n \in \bZ_+, \text{ restr}}  Rep(\gl_{n})_{poly} $$
 induced by the symmetric monoidal functors 
 $$\Gamma_n =  ( \cdot )^{\gl_n^{\perp}}:  Rep(\gl_{\infty})_{poly} \longrightarrow  Rep(\gl_{n})_{poly}$$
\end{proposition}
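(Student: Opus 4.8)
The plan is to exhibit $\Gamma_{\text{lim}}$ as an equivalence by invoking the general machinery already developed, reducing everything to statements about the combinatorics of Young diagrams. First I would observe that by Lemma \ref{lem:Gamma_res_compat}, the functors $\Gamma_n$ are compatible with the restriction functors $\InnaD{\mathfrak{Res}}_{n-1,n}$ via natural isomorphisms $\eta_{n-1,n}: \InnaD{\mathfrak{Res}}_{n-1,n} \circ \Gamma_n \stackrel{\sim}{\rightarrow} \Gamma_{n-1}$, and that each $\Gamma_n$ is exact (being additive between semisimple categories) and shortening (by Lemma \ref{lem:Gamma_simples}, it sends each simple $S^\lambda \bC^\infty$ to the simple $S^\lambda \bC^n$, never to a shorter object or a sum). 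Hence by the universal property of the restricted inverse limit (Proposition \ref{prop:stab_lim_univ_prop}), the family $(\Gamma_n, \eta_{n-1,n})$ induces a well-defined shortening functor $\Gamma_{\text{lim}}: Rep(\gl_\infty)_{poly} \rightarrow \varprojlim_{n \in \bZ_+, \text{ restr}} Rep(\gl_n)_{poly}$ with $\mathbf{Pr}_n \circ \Gamma_{\text{lim}} \cong \Gamma_n$. Since the source is semisimple symmetric monoidal and all the $\Gamma_n$ are symmetric monoidal (Lemma \ref{lem:Gamma_is_tensor}), $\Gamma_{\text{lim}}$ is automatically a symmetric monoidal functor into the symmetric monoidal inverse limit.

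Next I would prove that $\Gamma_{\text{lim}}$ is an equivalence by checking it is essentially surjective, full, and faithful. The cleanest route is to compare simple objects on both sides. By Corollary \ref{cor:param_simple_obj_stab_lim}, the simple objects of the restricted inverse limit are in bijection with $\varprojlim_{n \in \bZ_+} Irr_*(Rep(\gl_n)_{poly}) \setminus \{0\}$. The simple objects of $Rep(\gl_n)_{poly}$ are the $S^\lambda \bC^n$ indexed by Young diagrams $\lambda$ with $\ell(\lambda) \leq n$, and by Lemma \ref{lem:res_func_simples} the transition maps $f_{n-1,n}$ send $S^\lambda\bC^n \mapsto S^\lambda \bC^{n-1}$ when $\ell(\lambda) \leq n-1$ and to $0$ otherwise. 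Therefore an element of $\varprojlim_n Irr_*$ is precisely a coherent sequence that eventually stabilizes to a single diagram $\lambda$, i.e. it is determined by one arbitrary Young diagram $\lambda$ — exactly the index set for simples $S^\lambda\bC^\infty$ of $Rep(\gl_\infty)_{poly}$. Under this identification $\Gamma_{\text{lim}}$ sends $S^\lambda\bC^\infty$ to the simple corresponding to $\lambda$, so it induces a bijection on isomorphism classes of simples, giving essential surjectivity on simples; since both categories are semisimple (the target being semisimple because its simples exhaust it and it is a finite-length abelian category by Lemma \ref{lem:stab_lim_is_artinian}), essential surjectivity follows in general.

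For fullness and faithfulness it suffices, again by semisimplicity, to compute $\Hom$-spaces between simples. In the source, $\Hom(S^\lambda\bC^\infty, S^\mu\bC^\infty)$ is $\bC$ if $\lambda = \mu$ and $0$ otherwise. In the target, a morphism between two simples is a compatible family of morphisms $\{f_n: \mathbf{Pr}_n(\Gamma_{\text{lim}}(S^\lambda)) \to \mathbf{Pr}_n(\Gamma_{\text{lim}}(S^\mu))\}_n$; using that $\mathbf{Pr}_n \circ \Gamma_{\text{lim}} \cong \Gamma_n$ and Lemma \ref{lem:Gamma_simples}, these are morphisms $S^\lambda\bC^n \to S^\mu\bC^n$ compatible under the transition isomorphisms, which for $n$ large (past the stabilization point where both components are nonzero) force a single scalar, matching the source. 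Thus $\Gamma_{\text{lim}}$ is fully faithful. Combined with essential surjectivity and the monoidal structure already in hand, this proves $\Gamma_{\text{lim}}$ is an equivalence of symmetric monoidal abelian categories.

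I expect the main obstacle to be the careful bookkeeping in identifying $\varprojlim_n Irr_*(Rep(\gl_n)_{poly})$ with the set of all Young diagrams: one must argue that every coherent nonzero sequence eventually stabilizes to a fixed diagram and that this stabilization is consistent with the length filtration, rather than producing some exotic infinite object. This is really the content of verifying that the hypotheses of Proposition \ref{prop:filt_vs_stab_limit} hold for this system (which the text asserts), and it is where the structure of the branching rule — encoded in Lemma \ref{lem:res_func_simples} — does the essential work. Everything else reduces to formal consequences of semisimplicity and the universal properties already established.
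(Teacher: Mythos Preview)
Your approach is correct in outline but differs from the paper's, and it contains one genuine gap.

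The paper does not argue directly with simple objects and $\Hom$-spaces. Instead it equips $Rep(\gl_\infty)_{poly}$ with the $\bZ_+$-filtration by $\ell(\lambda)$, observes via Lemma~\ref{lem:Gamma_simples} that $\Gamma_n$ restricts to an equivalence $Fil_k(Rep(\gl_\infty)_{poly}) \to Fil_k(Rep(\gl_n)_{poly})$ for every $n\geq k$, and then invokes Proposition~\ref{prop:filt_inv_limit_cat_equiv} to conclude that $\Gamma_{\text{lim}}$ is an equivalence onto the $K$-filtered inverse limit (already identified with the restricted one via Proposition~\ref{prop:filt_vs_stab_limit}). This packages all the bookkeeping you do by hand into the single stabilization criterion, and in particular never needs to know that the target is semisimple. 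Your route, by contrast, is more elementary and self-contained: it bypasses the filtered machinery entirely and works directly with Corollary~\ref{cor:param_simple_obj_stab_lim}.

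The gap is in your claim that the target category is semisimple ``because its simples exhaust it and it is a finite-length abelian category by Lemma~\ref{lem:stab_lim_is_artinian}.'' That inference is false in general: finite-length abelian categories need not be semisimple (think of finite-dimensional $k[x]/(x^2)$-modules). You genuinely need semisimplicity for your essential-surjectivity step, since without it a bijection on simples does not force a bijection on objects. The claim \emph{is} true here, but it requires an argument: since each $Rep(\gl_n)_{poly}$ is semisimple, $\mathrm{socle}(C_j)=C_j$ for every $j$, so the subobject $D$ constructed in the proof of Lemma~\ref{lem:simple_obj_stab_lim} coincides with $C$ itself, exhibiting $C$ as semisimple. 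Once you insert this (or an equivalent direct argument that compatible direct-sum decompositions of the $C_i$ assemble into one for $C$), your proof goes through.
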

 \begin{proof}
Define a $\bZ_+$-filtration on the semisimple category $ Rep(\gl_{\infty})_{poly}$ by requiring the simple object $S^{\lam} \bC^{\infty}$ to lie in filtra $\ell(\lam)$. Lemma \ref{lem:Gamma_simples} then tells us that for any $k \in \bZ_+$ and any $n \geq k$, the functor 
$$\Gamma_n: Fil_k(Rep(\gl_{\infty})_{poly}) \longrightarrow Fil_k(Rep(\gl_{n})_{poly}) := Rep(\gl_n)_{poly, \text{ length } \leq k}$$ is an equivalence. Proposition \ref{prop:filt_inv_limit_cat_equiv} completes the proof.
 \end{proof}

 \begin{remark}
 The same result has been proved in \cite{HY}; the approach used there is equivalent to that of inverse limits of $\bZ_+$-filtered categories - namely, the authors give a $\bZ_+$-grading on the objects of each category $Rep(\gl_{n})_{poly}$, with $S^{\lambda} \bC^n$ lying in grade $\abs{\lambda}$. The ``stable'' inverse limit of these graded categories, as defined in \cite{HY}, is just the inverse limit of the $\bZ_+$-filtered categories $Rep(\gl_{n})_{poly}$ with the appropriate filtrations. Note that by Proposition \ref{prop:filt_vs_stab_limit}, this construction is equivalent to our construction of a $\varprojlim_{n \in \bZ_+, \text{ restr}}  Rep(\gl_{n})_{poly} $.

 In this case, this is also equivalent to taking the compact subobjects inside $\varprojlim_{n \in \bZ_+}  Rep(\gl_{n})_{poly} $.
\end{remark}
\begin{remark}
  The adjoint (on both sides) to functor $\Gamma_{\text{lim}}$ is the functor 
 $$\Gamma_{\text{lim}}^*:\varprojlim_{n \in \bZ_+, \text{ restr}}  Rep(\gl_{n})_{poly} \longrightarrow  Rep(\gl_{\infty})_{poly} $$ defined below.
 
 For any object $((M_n)_{n\geq 0}, (\phi_{n-1, n})_{n \geq 1})$ of $\varprojlim_{n \in \bZ_+, \text{ restr}}  Rep(\gl_{n})_{poly}$, the $\gl_{n-1}$-module $M_{n-1}$ is isomorphic (via $\phi_{n-1, n}$) to a $\gl_{n-1}$-submodule of $M_n$. 
 
 This allows us to cosider a vector space $M$ which is the direct limit of the vector spaces $M_n$ and the inclusions $\phi_{n-1, n}$. On this vector space $M$ we have a natural action of $\gl_{\infty}$: given $A \in \gl_n \subset \gl_{\infty}$ and $m \in M$, we have $m \in M_N$ for $N>>0$. In particular, we can choose $N \geq n$, and then $A$ acts on $m$ through its action on $M_N$.
 
  We can easily check that the $\gl_{\infty}$-module $M$ is polynomial: indeed, due to the equivalence in Proposition \ref{prop:inv_lim_cat_poly_rep}, there exists a polynomial $\gl_{\infty}$-module $M'$ such that $M_n \cong \Gamma_n(M')$ for every $n$, and $\phi_{n-1, n}$ are induced by the inclusions $\Gamma_{n-1}(M') \subset \Gamma_{n}(M')$. By definition of $M$, we have a $\gl_{\infty}$- equivariant map $M \rightarrow M'$, and it is easy to check that it is an isomorphism.
  
 We put $\Gamma_{\text{lim}}^*((M_n)_{n\geq 0}, (\phi_{n-1, n})_{n \geq 1}):= M$, and require that the functor $\Gamma_{\text{lim}}^*$ act on morphisms accordingly. The above construction then gives us a natural isomorphism $$ \Gamma_{\text{lim}}^* \circ \Gamma_{\text{lim}} \stackrel{\sim}{\longrightarrow} \id_{Rep(\gl_{\infty})_{poly}}. $$
 
\end{remark}


\end{document}